\newcommand{\ElThree}{L_0}
\newtheorem{theorem}{Theorem}
\numberwithin{theorem}{section}
\newtheorem{assumption}[theorem]{Assumption}
\newtheorem{corollary}[theorem]{Corollary}
\newtheorem{definition}[theorem]{Definition}
\newtheorem{lemma}[theorem]{Lemma}
\newtheorem{proposition}[theorem]{Proposition}
\theoremstyle{definition}
\newtheorem{remark}[theorem]{Remark}
\newcommand{\abGal}[1] {\operatorname{Gal}\big(\overline{#1}/#1\big)}
\begin{document}

\title{An explicit open image theorem for products of elliptic curves}
%\author{Davide Lombardo}

\author{Davide Lombardo
  \thanks{\texttt{davide.lombardo@math.u-psud.fr}}}
\affil{Département de Mathématiques d'Orsay}

\date{}
\maketitle

\begin{abstract}
Let $K$ be a number field and $E_1, \ldots, E_n$ be elliptic curves over $K$, pairwise non-isogenous over $\overline{K}$ and without complex multiplication over $\overline{K}$.
We study the image $G_\infty$ of the adelic representation of $\abGal{K}$ naturally attached to $E_1 \times \cdots \times E_n$. The main result is an explicit bound for the index of $G_\infty$ in $\left\{ (x_1,\ldots,x_n) \in  \operatorname{GL}_2(\hat{\mathbb{Z}})^n \bigm\vert \det x_i = \det x_j \;\; \forall i,j \right\}$.
\end{abstract}

\section{Introduction}

In this work we prove an explicit, adelic surjectivity result for the Galois representation attached to a product of pairwise non-isogenous, non-CM elliptic curves, extending the result of \cite{AdelicEC}. Our main theorem is as follows:
\begin{theorem}\label{thm_Explicit}
Let $E_1, \ldots, E_n$, $n \geq 2$, be elliptic curves defined over a number field $K$, pairwise not isogenous over $\overline{K}$. Suppose that $\operatorname{End}_{\overline{K}}(E_i)=\mathbb{Z}$ for $i=1,\ldots,n$, and denote by $G_\infty$ the image of $\abGal{K}$ inside
\[\prod_{i=1}^n \prod_{\ell}  \operatorname{Aut}(T_\ell(E_i)) \subset \operatorname{GL}_2(\hat{\mathbb{Z}})^n.\]
Set $\gamma:=10^{13}$, $\delta:=\exp \exp \exp (12)$, and let
$H=\max\left\{ 1, \log[K:\mathbb{Q}], \max_{i} h(E_i) \right\}$, where $h(E_i)$ denotes the stable Faltings height of $E_i$. The group $G_\infty$ has index at most
\[
\delta^{n(n-1)} \cdot \left([K:\mathbb{Q}] \cdot H^2 \right)^{\gamma n(n-1)} 
\]
%\zeta(2)^2 2^{3n(n-2)} \cdot [K:\mathbb{Q}] \cdot \max_{i \neq j} b_0(E_i \times E_j/K;2 \cdot 48^2)^{2250n(n-1)}
in
\[
\Delta:=\left\{ (x_1,\ldots,x_n) \in  \operatorname{GL}_2(\hat{\mathbb{Z}})^n \bigm\vert \det x_i = \det x_j \;\; \forall i,j \right\}.
\]
\end{theorem}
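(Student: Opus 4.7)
The strategy is to reduce the problem to pairwise interactions and then, for each pair, to combine the single-curve explicit open image theorem of \cite{AdelicEC} with an explicit version of Faltings' isogeny theorem (of Masser--Wüstholz or Gaudron--Rémond type). Writing $G_i \subseteq \operatorname{GL}_2(\hat{\mathbb{Z}})$ for the adelic image attached to $E_i$, the result of \cite{AdelicEC} already yields an explicit bound on $[\operatorname{GL}_2(\hat{\mathbb{Z}}) : G_i]$ depending polynomially on $[K:\mathbb{Q}]$ and $h(E_i)$. The new content is therefore to control the entanglement of the $n$ representations inside the determinant-coherent subgroup $\Delta$.

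For the pairwise case $n=2$, I would apply Goursat's lemma to $G_{12} \subseteq G_1 \times G_2$: the obstruction to $G_{12}$ being the full equal-determinants fibre product is captured by a common finite quotient $Q$ of suitable subquotients of $G_1$ and $G_2$. Because $E_1$ and $E_2$ are non-isogenous and non-CM, any non-trivial $Q$ would produce, via a Chebotarev/Frobenius-density argument, a $\overline{K}$-isogeny of bounded degree between $E_1$ and $E_2$; feeding this into an explicit isogeny theorem bounds $|Q|$ polynomially in $[K:\mathbb{Q}]$ and $H$. At small primes, where $\operatorname{GL}_2(\mathbb{F}_\ell)$ admits many maximal subgroups, this is supplemented by a case analysis of the possible exceptional images, controlled again by \cite{AdelicEC}. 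Combining these yields a bound of the shape $\delta \cdot ([K:\mathbb{Q}] H^2)^{C\gamma}$ for the pairwise obstruction, matching the $n(n-1)=2$ case of the theorem.

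For general $n$, I would proceed by induction on $n$ (or equivalently by an $n$-fold Goursat-type lemma for fibre products over the determinant): one shows that if a subgroup of $\prod_i G_i \cap \Delta$ maps to each pairwise fibre product with index at most $B$, then its index in the full fibre product is at most $B^{\binom{n}{2}}$, up to a determinant correction. Inserting the pairwise bound from the previous paragraph produces the exponent $2\binom{n}{2} = n(n-1)$ appearing in the statement.

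The main obstacle is keeping all explicit constants aligned across this composition. On the one hand, the Faltings height $h(E_i)$ enters the isogeny theorem polynomially, while $\log[K:\mathbb{Q}]$ already enters the bounds of \cite{AdelicEC}; the uniform role of $H=\max\{1,\log[K:\mathbb{Q}],\max_i h(E_i)\}$ in the final inequality requires that each cascading bound be reformulated in terms of $H$ alone. On the other hand, the combinatorial step combining the $\binom{n}{2}$ pairwise estimates into the $n$-fold one must be carried out so that the pairwise constants combine additively in the exponent (giving the clean $n(n-1)$ growth) rather than multiplicatively, which forces one to work with a \emph{single} common obstruction group at each level rather than treating entanglements at different primes independently.
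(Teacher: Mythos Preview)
Your overall architecture is right and matches the paper: reduce to the intersection with $\operatorname{SL}_2(\hat{\mathbb{Z}})^n$, show that it suffices to control all pairwise projections (an integral Goursat--Ribet lemma for $\operatorname{SL}_2(\mathbb{Z}_\ell)^n$, yielding the $\binom{n}{2}$ exponent), and for each pair combine the single-curve result of \cite{AdelicEC} with an explicit isogeny theorem. The determinant correction by $[K:\mathbb{Q}]$ is also as in the paper.

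The genuine gap is in your treatment of the pairwise step. Your sentence ``any non-trivial $Q$ would produce, via a Chebotarev/Frobenius-density argument, a $\overline{K}$-isogeny of bounded degree between $E_1$ and $E_2$'' is not correct as stated: $E_1$ and $E_2$ are assumed non-isogenous, so no such isogeny exists, and a Frobenius-trace correlation alone does not manufacture one. What the Goursat data \emph{can} give you is a Galois-stable subgroup of $E_1[\ell^N]\times E_2[\ell^N]$, hence an isogeny from the \emph{surface} $E_1\times E_2$ to another abelian surface; the explicit isogeny bound is then applied to $E_1\times E_2$, not to either curve. At the mod-$\ell$ level this is straightforward (and is essentially the Masser--W\"ustholz/Ribet argument the paper cites), but it only bounds the largest bad prime $\ell_0$.

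The hard case, which your proposal does not address, is small primes $\ell\le \ell_0$ at large $\ell$-adic depth. Here the Goursat obstruction is a pro-$\ell$ group, and a ``case analysis of exceptional images'' in $\operatorname{GL}_2(\mathbb{F}_\ell)$ says nothing about how deep the entanglement persists in $\operatorname{GL}_2(\mathbb{Z}_\ell)^2$. The paper handles this by linearizing via Pink's Lie-algebra construction: one shows that if the kernel of $L(H_\ell)\to L_1$ vanishes modulo $\ell^t$ for large $t$, then $L(H_\ell)$ is, modulo $\ell^N$ with $N$ comparable to $t$, the graph of a Lie-algebra map $\mathfrak{sl}_2\to\mathfrak{sl}_2$; an explicit ``approximate Skolem--Noether'' result then shows this map is conjugation by some $M\in M_2(\mathbb{Z}_\ell)$, whence $g_2\equiv Mg_1M^{-1}\pmod{\ell^N}$ for all $(g_1,g_2)\in H_\ell$. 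This $M$ is exactly what produces the Galois-stable graph $\Gamma\subset E_1[\ell^N]\times E_2[\ell^N]$, and the isogeny bound on $(E_1\times E_2)/\Gamma$ then caps $t$. Without this Lie-algebra step (or a substitute for it), you have no mechanism to convert an $\ell$-adic Goursat correlation into the geometric input the isogeny theorem requires, and hence no way to bound the index at small primes.
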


\begin{remark}
Note that the compatibility of the Weil pairing with the action of Galois forces $G_\infty$ to be contained in $\Delta$. Also note that we shall prove slightly more precise statements (see lemma \ref{lemma_Estimatesb0} and theorem \ref{thm_FinaleNCurves} below), which immediately imply theorem \ref{thm_Explicit} by proposition \ref{prop_b0} and elementary estimates.

%Furthermore, it is clear from lemma XXX that if we set $H=\max_i \left\{ h(E_i) \right\}$, then there exist absolute, computable constants $c_1,c_2,c_3$ such that the index $[\Delta:G_\infty]$ is bounded by
%\[
%c_1^{n(n-1)} [K:\mathbb{Q}]^{c_2n(n-1)} H^{c_3n(n-1)}.
%\]
\end{remark}

It should be noted that it has been known since the work of Serre and Masser-W\"ustholz (cf. \cite{MR1209248}, Main Theorem and Proposition 1) that the isogeny theorem (section \ref{sect_IsogenyThms} below) gives an effective bound $\ell_0$ on the largest prime $\ell$ for which the image of the representation
\[
\abGal{K} \to \operatorname{Aut}(T_\ell(E_1 \times \cdots \times E_n))
\]
does not contain $\operatorname{SL}_2(\mathbb{Z}_\ell)^n$. As it was in \cite{AdelicEC}, the main difficulty in proving theorem \ref{thm_Explicit} lies in controlling the image of the representation modulo powers of primes smaller than $\ell_0$.

The proof of theorem \ref{thm_Explicit} is somewhat technical, so before fiddling with the details we describe the main ideas behind it. The general framework is the same as that of the proof of the non-effective open image theorem for such a product (cf. for example \cite[Theorem 3.5]{MR0419358}), with the added difficulties that naturally arise when trying to actually compute the index. In particular, when writing `of finite index' or `open' in the sketch that follows we tacitly imply that the index in question is explicitly computable in terms of the data. Whenever the need arises to actually quantify indices, it will be useful to work with the following `standard' open subgroups:

\begin{definition}
For a prime $\ell$ and a positive integer $s$ we let $\mathcal{B}_\ell(s)$ be the open subgroup of $\operatorname{SL}_2(\mathbb{Z}_\ell)$ given by
\[
\left\{ x \in \operatorname{SL}_2(\mathbb{Z}_\ell) \bigm\vert x \equiv \operatorname{Id} \pmod{\ell^s} \right\}.
\]
We also set $\mathcal{B}_\ell(0)=\operatorname{SL}_2(\mathbb{Z}_\ell)$, and for non-negative integers $k_1, \ldots, k_n$ we denote by $\mathcal{B}_\ell(k_1,\ldots, k_n)$ the open subgroup $\prod_{j=1}^n \mathcal{B}_\ell(k_i)$ of $\operatorname{SL}_2(\mathbb{Z}_\ell)^n$.
\end{definition}

\smallskip

Let us now describe the proof method proper. 
%\smallskip
%reduces the problem of studying Galois representations attached to an arbitrary (finite) product to the analogous one for 
It is not hard to see that it is enough to study the intersection $G_\infty \cap \operatorname{SL}_2(\mathbb{Z}_\ell)^n$, because the determinant of elements in $G_\infty$ can be easily understood in terms of the cyclotomic character. A short argument then shows that it suffices to consider products $E_1 \times E_2$ involving only two factors: this is done by proving that a subgroup of $\operatorname{SL}_2(\mathbb{Z}_\ell)^n$ whose projection on any \textit{pair} of factors is of finite index is itself of finite (and explicitly bounded) index. This step will be carried out in section \ref{sect_ReductionToTwo} below, and should be thought of as the `integral' version of \cite[Lemma on p. 790]{MR0457455}.

With this result at hand we are thus reduced to dealing with subgroups $G$ of $\operatorname{SL}_2(\mathbb{Z}_\ell) \times \operatorname{SL}_2(\mathbb{Z}_\ell)$ whose projections on either factor are of finite index in $\operatorname{SL}_2(\mathbb{Z}_\ell)$. Note that the fact that this index is finite is the open image theorem for a single elliptic curve, which was proved by Serre in \cite{MR0387283} and made explicit in \cite{AdelicEC}. We wish to show that $G$ is of (explicitly bounded) finite index in $\operatorname{SL}_2(\mathbb{Z}_\ell)^2$, that is, we want to produce a $t$ such that $G$ contains $\mathcal{B}_\ell(t,t)$: this clearly comes down to proving that the two kernels $K_i=\ker \left(G \xrightarrow{\pi_i}\operatorname{SL}_2(\mathbb{Z}_\ell) \right)$, when identified with subgroups of $\operatorname{SL}_2(\mathbb{Z}_\ell)$, are of (explicitly bounded) finite index. By symmetry, we just need to deal with $K_1$.

%of the canonical projections of $G$ on the two $\operatorname{SL}_2$ factors, and by simmetry we just need to deal with $K_2$.

In section \ref{sect_PinkResults} we linearize the problem by reducing it to the study of certain $\mathbb{Z}_\ell$-Lie algebras: we give the statements of two technical results whose proof, being rather lengthy, is deferred to the companion paper \cite{GeneralPink}; while the results themselves are more complicated, the methods used to show them do not differ much from those of \cite{AdelicEC}.%chapter \ref{chapter_EC}, where the case of a single elliptic curve is treated.

A simple lemma, again given in section \ref{sect_PinkResults}, further reduces the problem of finding an integer $t$ such that $\mathcal{B}_\ell(t)$ is contained in $K_1$ to the (easier) question of finding a $t$ such that $K_1(\ell^t)$, the reduction modulo $\ell^t$ of $K_1$, is nontrivial. We exploit here the fact that $\pi_2(G)$ (the projection of $G$ on the second factor $\operatorname{SL}_2(\mathbb{Z}_\ell)$) acts by conjugation on $K_1$, the latter being a normal subgroup of $G$: we prove that a group whose reduction modulo $\ell^t$ is nontrivial and that is stable under conjugation by a finite-index subgroup of $\operatorname{SL}_2(\mathbb{Z}_\ell)$ must itself be of finite index in $\operatorname{SL}_2(\mathbb{Z}_\ell)$. This reduction step is made simpler by the fact that we can work with Lie algebras instead of treating the corresponding groups directly (which might be quite complicated).

%To be more precise, we actually give the corresponding statement for Lie algebras: by the results recalled in section \ref{sect_PinkResults}, this is essentially equivalent to the statement for groups. Also we refer to this section for the definition of the maps $\Theta_n$ and of the Lie algebra of a subgroup of $\operatorname{GL}_2(\mathbb{Z}_\ell)^n$.

%by Pink's theory (XXX), complemented by proposition XXX below, this is essentially equivalent to the statement for groups.

Next we ask what happens if we suppose that the smallest integer $t$ such that $K_1(\ell^t)$ is nontrivial is in fact very large. The conclusion is that the Lie algebra of $G$ looks `very much like' the graph of a Lie algebra morphism $\mathfrak{sl}_2(\mathbb{Z}_\ell) \to \mathfrak{sl}_2(\mathbb{Z}_\ell)$, namely it induces an actual Lie algebra morphism when regarded modulo $\ell^N$ for a very large $N$ (depending on $t$). Following for example the approach of Ribet (cf.~the theorems on p.~795 of \cite{MR0457455}), we would like to know that all such morphisms are `inner', that is, they are given by conjugation by a certain matrix: it turns out that this is also true in our context, even though the result is a little less straightforward to state (cf.~section \ref{sect_AutomorphismsAreInner}).

%and it is stated and proved in 

% forms the subject of section \ref{sect_AutomorphismsAreInterior}.

In section \ref{sect_ProductOfTwo} we then deal with the case of two elliptic curves, applying the aforementioned results to deduce an open image theorem for each prime $\ell$. It is then an easy matter to deduce, as we do in section \ref{sect_Conclusion}, the desired adelic result for any finite product.

%s \ref{sect_Pinkl} and \ref{sect_Pink2} should be considered as appendices: they contain the proof of some technical results relating the structure of subgroups of $\operatorname{GL}_2(\mathbb{Z}_\ell)^2$ to that of their Lie algebras. The statements in themselves are rather believable, but giving a complete proof requires a certain technical effort. 

\medskip

\noindent\textbf{Notation.} Throughout the whole paper, the prime $2$ plays a rather special role, and special care is needed to treat it. In order to give uniform statements that hold for every prime we put $v=0$ or $1$ according to whether the prime $\ell$ we are working with is odd or equals 2, that is we set
\[
v=v_\ell(2)=\begin{cases} 0, \text{ if } \ell \text{ is odd} \\ 1, \text{ otherwise}. \end{cases}
\]
We will also consistently use the following notations: 
\begin{itemize}
\item $G_\ell$, to denote the image of $\abGal{K}$ in $\operatorname{Aut}T_\ell(E_1) \times \cdots \times \operatorname{Aut} T_\ell(E_k)$;
\item $G(\ell^n)$, where $G$ is a closed subgroup of a certain $\operatorname{GL}_2(\mathbb{Z}_\ell)^k$, to denote the reduction of $G$ modulo $\ell^n$, that is to say its image in $\operatorname{GL}_2(\mathbb{Z}/\ell^n\mathbb{Z})^k$;
\item $G'$, to denote the topological closure of the commutator subgroup of $G$;
\item $[g]$, where $g \in G$, to denote the reduction of $g$ modulo $\ell$, that is, its image in $\operatorname{GL}_2(\mathbb{F}_\ell)^k$.
\end{itemize}

\noindent\textbf{Acknowledgments.} The author gratefully acknowledges financial support from the Fondation Mathématique Jacques Hadamard (grant ANR-10-CAMP-0151-02 in the “Programme des Investissements d’Avenir”).

\section{Preliminaries on isogeny bounds}\label{sect_IsogenyThms}
The main tool that makes all the effective estimates possible is the isogeny theorem of Masser and W\"ustholz \cite{MR1217345} \cite{MR1207211}, which we employ in the explicit version proved in \cite{PolarisationsEtIsogenies}. We need some notation: we let $\alpha(g)=2^{10}g^3$ and define, for any abelian variety $A/K$ of dimension $g$,
\[
b(A/K)=b([K:\mathbb{Q}],g,h(A))=\left( (14g)^{64g^2} [K:\mathbb{Q}] \max\left(h(A), \log [K:\mathbb{Q}],1 \right)^2 \right)^{\alpha(g)}.
\]

\begin{theorem}{(\cite[Théorème 1.4]{PolarisationsEtIsogenies})}\label{thm_Isogeny}
Let $K$ be a number field and $A, A^*$ be two Abelian $K$-varieties of dimension $g$. If $A, A^*$ are isogenous over $K$, then there exists a $K$-isogeny $A^* \to A$ whose degree is bounded by $b([K:\mathbb{Q}],\dim(A),h(A))$.\end{theorem}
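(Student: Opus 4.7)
The plan is to follow the Masser--Wüstholz strategy, realising $\operatorname{Hom}_K(A^*,A)$ as a lattice $L$ in a real vector space with a natural Hermitian structure, and then using geometry of numbers to produce a short element of $L$ that is automatically an isogeny. The key analytic input is an explicit period lower bound obtained from Wüstholz's analytic subgroup theorem.

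First I would apply Zarhin's trick to replace $A$ and $A^*$ by principally polarisable varieties of the form $(A\times A^\vee)^4$, at the cost of enlarging the dimension by a factor of $8$ and changing the height in a controlled way. After fixing principal polarisations and Riemann forms, I would regard $L=\operatorname{Hom}_K(A^*,A)$ as a sublattice of the real vector space $V=\operatorname{Hom}_{\mathbb{R}}(H_1(A^*_{\mathbb{C}},\mathbb{R}),H_1(A_{\mathbb{C}},\mathbb{R}))$, which inherits a Hermitian inner product from the two polarisations.

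Next I would bound the covolume of $L$ in $V$ from above. Using Faltings' relation between the Hermitian structure on $H_1(A_{\mathbb{C}},\mathbb{Z})$ and the stable Faltings height via the top exterior power of $\operatorname{Lie}(A)$, one should obtain an estimate of the form $\operatorname{covol}(L) \leq C_1(g,[K:\mathbb{Q}]) \cdot e^{c(g) h(A)}$. Combined with a Minkowski-type argument inside $V$, this yields a nonzero $\varphi \in L$ whose norm is polynomially controlled by the same quantities, and translating the norm bound into a degree bound via $\deg(\varphi) \leq c'(g) \|\varphi\|^{2g}$ gives the shape of the stated estimate, \emph{provided} the short element actually is an isogeny.

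The main obstacle is precisely this last point: a priori the short vector $\varphi$ could be non-surjective, with image contained in a proper abelian $K$-subvariety of $A$. To rule this out, one combines the isogeny hypothesis with an explicit version of Wüstholz's analytic subgroup theorem (in the Bost--David--Masser or Gaudron form), which supplies a lower bound on the norm of any period associated to a proper abelian $K$-subvariety of $A$. Making this transcendence estimate fully explicit in $g$, $[K:\mathbb{Q}]$ and $h(A)$ --- and in particular achieving the shape of the exponents $64g^2$ and $\alpha(g)=2^{10}g^3$ appearing in the definition of $b(A/K)$ --- is the genuinely hard part of the argument; the surrounding geometry-of-numbers step is essentially formal once the period bound is in hand.
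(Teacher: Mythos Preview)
The paper does not prove this theorem at all: it is quoted verbatim as \cite[Th\'eor\`eme~1.4]{PolarisationsEtIsogenies} (Gaudron--R\'emond), building on the earlier Masser--W\"ustholz isogeny theorem, and is used as a black box throughout Section~\ref{sect_IsogenyThms}. So there is nothing in the paper's argument to compare your proposal against.

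That said, your sketch is a reasonable high-level outline of how such explicit isogeny bounds are actually obtained in the literature: Zarhin's trick to reduce to the principally polarised case, the Rosati-positive Hermitian structure on $\operatorname{Hom}_K(A^*,A)\otimes\mathbb{R}$, a geometry-of-numbers step to find a short nonzero homomorphism, and an explicit period/subgroup-theorem lower bound to guarantee the short vector is surjective. The genuinely delicate point you correctly flag --- pinning down constants of the shape $(14g)^{64g^2}$ and the exponent $\alpha(g)=2^{10}g^3$ --- is exactly where the work of \cite{PolarisationsEtIsogenies} lies, and cannot be reproduced in a paragraph. For the purposes of the present paper, however, none of this is needed: the theorem is simply an imported input.
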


\begin{remark} As the notation suggests, the three arguments of $b$ will always be the degree of a number field $K$, the dimension $g$ of an Abelian variety $A/K$ and its stable Faltings height $h(A)$.
\end{remark}

We shall need a slight refinement of this bound. Following Masser \cite{MR1619802}, we introduce the following definition:
\begin{definition}
Let $A/K$ be an abelian variety. We say that $A$ is a TM-product over $K$ if $A$ is isomorphic (over $K$) to $A_1^{e_1} \times \cdots \times A_n^{e_n}$, where $A_1,\ldots,A_n$ are $K$-simple abelian varieties, mutually non-isogenous (over $K$) and with trivial endomorphism ring (over $K$).
\end{definition}

Adapting arguments given by Masser in \cite{MR1619802}, it is easy to prove
\begin{theorem}{(\cite[Theorem 2.4]{AdelicEC})}
Suppose that $A/K$ is a TM-product over $K$. Let $b \in \mathbb{R}$ be a constant with the following property: for every $K$-abelian variety $A^*$ isogenous to $A$ over $K$ there exists an isogeny $\psi:A^* \to A$ with $\deg \psi \leq b$. Then there exists an integer $b_0 \leq b$ with the following property: for every $K$-abelian variety $A^*$ isogenous to $A$ over $K$ there exists an isogeny $\psi_0:A^* \to A$ with $\deg \psi_0 \bigm\vert b_0$.
\end{theorem}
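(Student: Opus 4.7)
My plan is to use the strong structural consequences of the TM-product assumption on $\operatorname{Hom}$-groups. Writing $A = A_1^{e_1} \times \cdots \times A_n^{e_n}$, every $K$-isogenous variety $A^*$ shares the same shape $A^* = B_1^{e_1} \times \cdots \times B_n^{e_n}$ with $B_i$ $K$-simple and $K$-isogenous to $A_i$; the pairwise non-isogeny and $\operatorname{End}_K(A_i) = \mathbb{Z}$ assumptions yield
\[
\operatorname{Hom}_K(A^*, A) = \prod_{i=1}^n M_{e_i}\bigl(\operatorname{Hom}_K(B_i, A_i)\bigr),
\]
with each $\operatorname{Hom}_K(B_i, A_i)$ a free $\mathbb{Z}$-module of rank $1$. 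Fixing a generator $\phi_i$ of degree $d_i(B_i)$, a direct computation shows that every $K$-isogeny $A^* \to A$ has degree $m(A^*) \cdot c$ for some positive integer $c$, where $m(A^*) := \prod_i d_i(B_i)^{e_i}$ is the minimum degree (realised by $(\phi_i^{\oplus e_i})_i$).

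Since every isogeny degree from $A^*$ to $A$ is a multiple of $m(A^*)$, the condition $\deg \psi_0 \mid b_0$ is equivalent to $m(A^*) \mid b_0$ — one then takes $\psi_0$ to be the minimal isogeny. The natural definition is therefore $b_0 := \operatorname{lcm}_{A^*} m(A^*)$, and the content of the theorem is the bound $b_0 \leq b$. Multiplicativity of $m(A^*)$ over the simple factors, together with the identity $\operatorname{lcm}(\prod_i x_i^{e_i}) = \prod_i (\operatorname{lcm} x_i)^{e_i}$ for independent variables, gives $b_0 = \prod_i L_i^{e_i}$ with $L_i := \operatorname{lcm}_{B_i} d_i(B_i)$, so it suffices to find, for each $i$ separately, a single $B_i^{\max}$ in the $K$-isogeny class of $A_i$ with $d_i(B_i^{\max}) = L_i$. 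The product $A^*_{\max} := \prod_i (B_i^{\max})^{e_i}$ then satisfies $m(A^*_{\max}) = b_0$, and applying the hypothesis to this particular $A^*_{\max}$ yields $b_0 \leq b$ for free.

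To construct $B_i^{\max}$ I would parameterise the $K$-isogeny class of $A_i$ by $G_K$-stable \emph{primitive} finite subgroups $H \subset A_i(\overline{K})$ (``primitive'' meaning $H \not\supset A_i[p]$ for any prime $p$), under which $B_i = A_i/H$ and $d_i(B_i)$ is determined in a standard way by $|H|$. For each prime $p$ that arises (only finitely many, since $d_i(B_i) \leq b$), choose a $G_K$-stable $p$-primitive subgroup $H^{(p)} \subset A_i[p^\infty]$ of maximal order, and set $H^{\max} := \bigoplus_p H^{(p)}$. The identity $H^{\max} \cap A_i[p] = H^{(p)} \cap A_i[p] \subsetneq A_i[p]$ shows that $H^{\max}$ is still $G_K$-stable and primitive, and $|H^{\max}| = \prod_p |H^{(p)}|$ is divisible by the order of any other primitive $G_K$-stable subgroup in the class; then $B_i^{\max} := A_i/H^{\max}$ does the job. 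The main delicate point is the primitivity bookkeeping when combining $p$-local pieces across distinct primes — verifying that no full $p$-torsion sneaks into the sum and that the expected divisibility statement really holds prime by prime.
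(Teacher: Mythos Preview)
Your argument rests on the opening structural claim that every $K$-isogenous $A^*$ decomposes as $B_1^{e_1}\times\cdots\times B_n^{e_n}$, and this claim is false. Take $n=2$, $e_1=e_2=1$, so $A=A_1\times A_2$ with $A_1,A_2$ elliptic curves over $K$, non-isogenous over $K$, each with $\operatorname{End}_K(A_i)=\mathbb{Z}$ and a $K$-rational point $P_i$ of order $2$. Set $G=\langle(P_1,P_2)\rangle$ and $A^*=A/G$. A direct check (for instance via lattices, or by asking which $(a,b)\in\operatorname{End}^0_K(A^*)=\mathbb{Q}\times\mathbb{Q}$ lift to genuine endomorphisms) gives
\[
\operatorname{End}_K(A^*)=\{(a,b)\in\mathbb{Z}\times\mathbb{Z}:a\equiv b\pmod 2\},
\]
which contains no nontrivial idempotent. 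Hence $A^*$ does \emph{not} split as $B_1\times B_2$ with $B_i$ in the $K$-isogeny class of $A_i$. So your formula $m(A^*)=\prod_i d_i(B_i)^{e_i}$ is simply undefined for this $A^*$, and everything downstream (the identification $b_0=\prod_i L_i^{e_i}$, the reduction to a single factor $A_i$, the construction of $B_i^{\max}$) does not cover it.

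This is not merely a gap in the proof of a true intermediate lemma: your proposed $b_0=\prod_i L_i^{e_i}$ can be genuinely too small. In the example above one computes $m(A^*)=8$ (the minimal isogeny $A^*\to A$ is the one induced by $[2]$ on $A$, of degree $16/|G|=8$). On the other hand $v_2(b_0)=v_2(L_1)+v_2(L_2)$, and if one arranges---as is certainly possible for suitable $A_1,A_2$---that the only $G_K$-stable cyclic $2$-power subgroup of each $A_i$ is $\langle P_i\rangle$, then $v_2(L_i)=1$ and $8\nmid b_0$. The non-split $A^*$'s carry arithmetic information that is invisible from the individual isogeny classes of the $A_i$.

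The paper does not supply its own proof (it cites \cite{AdelicEC}, following Masser \cite{MR1619802}), so there is no line-by-line comparison to make. But the approach there avoids this trap: rather than attempting to decompose $A^*$, one parametrises the isogeny class by $K$-rational finite subgroups $G\subset A$ via $A^*\cong A/G$, and analyses the left ideal $\{\phi\in\operatorname{End}_K(A):G\subset\ker\phi\}$ of $\operatorname{End}_K(A)=\prod_i M_{e_i}(\mathbb{Z})$ directly. Your prime-by-prime assembly idea is the right instinct; it just has to be carried out at the level of these subgroups $G$ of $A$, not at the level of factors $B_i$ of $A^*$ that need not exist.
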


We will denote by $b_0(A/K)$ the minimal $b_0$ with the property of the above theorem; in particular $b_0(A/K) \leq b(A/K)$. 
Suppose now that, in addition to $A/K$ being a TM-product over $K$, its simple factors $A_i$ are \textit{absolutely} simple and pairwise non-isogenous \textit{over $\overline{K}$}. Then for any field extension $K'$ of $K$ the hypotheses of the previous theorem hold for $A_{K'}$, so it makes sense to consider the quantity $b_0(A/K')$ as $K'$ ranges through all the finite extensions of $K$ of degree bounded by $d$. Since $b_0(A/K') \leq b(d[K:\mathbb{Q}],h(A),\dim(A))$ stays bounded, the number
$\displaystyle \operatorname{lcm}_{[K':K] \leq d} b_0(A/K')$
is finite, and we give it a name:
\begin{definition}\label{def_b0} 
Let $A/K$ be an abelian variety such that $A$ is a TM-product over $K$, with simple factors that are absolutely simple and pairwise non-isogenous over $\overline{K}$. We set
\[b_0(A/K;d)=\operatorname{lcm}_{[K':K] \leq d} b_0(A/K').\]
\end{definition}

A slight modification of the arguments of \cite[Theorem D]{MR1619802}, combined with theorem \ref{thm_Isogeny}, gives
\begin{proposition}\label{prop_b0}{(\cite[Proposition 2.6]{AdelicEC})}
Let $A/K$ be a $g$-dimensional abelian variety that is isomorphic over $K$ to a product $A_1^{e_1} \times \cdots \times A_n^{e_n}$, where $A_1,\ldots,A_n$ are simple over $\overline{K}$, mutually non-isogenous over $\overline{K}$, and have trivial endomorphism ring over $\overline{K}$. Then we have
\[
b_0(A/K;d) \leq b(A/K;d):=4^{\exp(1) \cdot (d(1+\log d )^2)^{\alpha(g)} } b([K:\mathbb{Q}],\dim(A),h(A))^{1+ \alpha(g) \log(d(1+\log d)^2) }.
\]
\end{proposition}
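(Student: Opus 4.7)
The plan is to combine the explicit isogeny theorem (Theorem \ref{thm_Isogeny}) with a quantitative version of the argument Masser uses for \cite[Theorem D]{MR1619802}, adapted so as to control the lcm appearing in Definition \ref{def_b0}.

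First I would observe that for every extension $K'/K$ with $[K':K]\leq d$, the abelian variety $A_{K'}$ is still a TM-product over $K'$: indeed the factors $A_i$ are assumed absolutely simple, pairwise non-isogenous over $\overline{K}$ and with geometric endomorphism ring $\mathbb{Z}$, hence the same holds over any $K'\subset \overline{K}$. Thus Theorem \ref{thm_Isogeny} applies over each such $K'$ and, combined with the previous theorem, gives the uniform bound $b_0(A/K') \leq b([K':\mathbb{Q}], g, h(A)) \leq B := b(d[K:\mathbb{Q}], g, h(A))$, using that the stable Faltings height is invariant under extension of the base field and $[K':\mathbb{Q}]\leq d[K:\mathbb{Q}]$.

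This handles each individual $b_0(A/K')$ but says very little about their lcm, since $K'$ ranges over an infinite family of subfields of $\overline{K}$. The core of the proof is to show that, for every prime $p$, the quantity $\max_{[K':K]\leq d} v_p\bigl(b_0(A/K')\bigr)$ grows only polylogarithmically in $d$. Following Masser, for a TM-product the minimal isogenies $A^\ast \to A$ are described (factor by factor) in terms of $\operatorname{Gal}(\overline{K}/K')$-stable sublattices of the Tate modules $T_p(A_i)^{e_i}$; as $K'$ varies over extensions of degree $\leq d$, the corresponding Galois subgroups are exactly the open subgroups of index $\leq d$ in $\operatorname{Gal}(\overline{K}/K)$. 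A uniform version of Masser's lattice-index analysis then bounds $\max_{K'} v_p\bigl(b_0(A/K')\bigr)$ by $v_p(B)\bigl(1+\alpha(g)\log(d(1+\log d)^2)\bigr)$ plus a small-prime correction that does not depend on $B$.

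Multiplying over all primes $p$ and separating large from small primes gives the asserted estimate, with $B^{1+\alpha(g)\log(d(1+\log d)^2)}$ coming from the large-prime contributions and $4^{\exp(1)(d(1+\log d)^2)^{\alpha(g)}}$ absorbing the small-prime contribution. The main obstacle I expect is precisely the middle step: passing from a bound on each individual $b_0(A/K')$ to a bound on their lcm with only a polylog-in-$d$ loss in the exponent of $B$. This requires tracking, uniformly in the subgroup $\operatorname{Gal}(\overline{K}/K')$, how the index of a Galois-stable sublattice of $V_p(A)$ can grow when we replace $\operatorname{Gal}(\overline{K}/K)$ by a subgroup of index $\leq d$, which is exactly what a quantitative form of Masser's Theorem D provides.
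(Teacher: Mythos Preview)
The paper does not actually prove this proposition: it is quoted verbatim from \cite[Proposition 2.6]{AdelicEC}, and the only indication given here is the sentence preceding it, namely that it follows from ``a slight modification of the arguments of \cite[Theorem D]{MR1619802}, combined with theorem \ref{thm_Isogeny}.'' So there is no detailed proof in this paper to compare your attempt against; your outline does match the strategy the paper points to (isogeny theorem for the individual bound, Masser's Theorem~D machinery for the passage to the lcm).

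Two comments on the sketch itself. First, in your last paragraph you write that the large-prime contribution yields $B^{1+\alpha(g)\log(d(1+\log d)^2)}$ with $B=b(d[K:\mathbb{Q}],g,h(A))$, but the quantity in the statement is $b([K:\mathbb{Q}],g,h(A))^{1+\alpha(g)\log(d(1+\log d)^2)}$, i.e.\ the base is the isogeny bound over $K$ itself, not over a degree-$d$ extension. Since $B$ is the larger of the two, your version is strictly weaker and does not recover the stated inequality; the exponent $1+\alpha(g)\log(d(1+\log d)^2)$ is precisely what one obtains by expressing $b(d[K:\mathbb{Q}],\ldots)$ in terms of $b([K:\mathbb{Q}],\ldots)$ and absorbing the leftover factors into the $4^{\exp(1)(d(1+\log d)^2)^{\alpha(g)}}$ term, so the individual bound $b_0(A/K')\le b(d[K:\mathbb{Q}],g,h(A))$ already accounts for essentially the whole statement once unwound. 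Second, you yourself flag the key step---passing from a bound on each $b_0(A/K')$ to a bound on their lcm---as ``the main obstacle'' and do not carry it out; as written this remains a plan rather than a proof, and in fact once the first point is addressed the lcm issue largely dissolves, since the number of primes dividing any $b_0(A/K')$ is controlled by $\log B$ and each prime power is at most $B$.
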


%\todo{This would be \cite[Proposition 2.5]{AdelicEC}, if only that proposition were not wrong...}

%In particular, for $d=2 \cdot 48^2$ and $g=2$ we have
%\begin{corollary}\label{cor_b0}
%If $E_1,E_2 /K $ are elliptic curves without potential complex multiplication, we have
%\[
%b_0 \left(E_1 \times E_2/K; 2 \cdot 48^2\right) \leq \exp \exp \exp (12.5) \cdot \left( [K:\mathbb{Q}] \cdot H^2 \right)^{10^9},
%\]
%where $H=\max\left\{1, \log [K:\mathbb{Q}], h(E_1), h(E_2) \right\}$.
%\end{corollary}

\section{An integral Goursat-Ribet lemma for $\operatorname{SL}_2(\mathbb{Z}_\ell)$}\label{sect_ReductionToTwo}
As anticipated, a (necessary and) sufficient condition for a closed subgroup of $\operatorname{SL}_2(\mathbb{Z}_\ell)^n$ to be open is that all its projections on pairs of factors $\operatorname{SL}_2(\mathbb{Z}_\ell)^2$ are themselves open. This is precisely the content of the following lemma:%will follow rather easily from the following lemma:% (whose easy verification we omit):

\begin{lemma}{(\cite[Lemma 2.9]{GeneralPink})}
Let $n$ be a positive integer, $G$ a closed subgroup of $\prod_{i=1}^n \operatorname{SL}_2(\mathbb{Z}_\ell)$, and $\pi_i$ the projection from $G$ on the $i$-th factor. Suppose that, for every $i\neq j$, the group $\left(\pi_i \times \pi_j\right)(G)$ contains $\mathcal{B}_\ell(s_{ij}, s_{ij})$ for a certain non-negative integer $s_{ij}$ (with $s_{ij} \geq 2$ if $\ell=2$ and $s_{ij} \geq 1$ if $\ell=3$): then $G$ contains $\prod_{i=1}^n \mathcal{B}_\ell\left(\sum_{j \neq i} s_{ij}+(n-2)v\right)$.
\end{lemma}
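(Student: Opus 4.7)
The strategy is to show that, for each $i \in \{1,\ldots,n\}$, the set $W^{(i)} := \{w \in \operatorname{SL}_2(\mathbb{Z}_\ell) \mid (1,\ldots,1,w,1,\ldots,1) \in G\}$ (with $w$ in the $i$-th position) is a closed subgroup of $\operatorname{SL}_2(\mathbb{Z}_\ell)$ and contains $\mathcal{B}_\ell(t_i)$, where $t_i := \sum_{j\neq i}s_{ij}+(n-2)v$. Since elements with pairwise disjoint supports commute, their products realize an arbitrary element of $\prod_i \mathcal{B}_\ell(t_i)$, yielding the claimed inclusion.

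Fix $i$ and enumerate $\{1,\ldots,n\}\setminus\{i\}$ as $j_1,\ldots,j_{n-1}$. I will prove by induction on $k \in \{1,\ldots,n-1\}$ the following slightly stronger statement: for every $w \in \mathcal{B}_\ell\big(\sum_{m=1}^{k}s_{ij_m}+(k-1)v\big)$ there exists $g \in G$ with $\pi_i(g)=w$ and $\pi_{j_m}(g)=1$ for every $m \leq k$ (the projections in the remaining positions are not controlled). For each fixed $k$, the set of legal $i$-th coordinates is closed under products and inverses, hence a closed subgroup of $\operatorname{SL}_2(\mathbb{Z}_\ell)$. The case $k=n-1$ is exactly the desired inclusion $W^{(i)} \supseteq \mathcal{B}_\ell(t_i)$.

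The base case $k=1$ is immediate from the hypothesis $(\pi_i \times \pi_{j_1})(G) \supseteq \mathcal{B}_\ell(s_{ij_1},s_{ij_1})$, which provides, for any $w \in \mathcal{B}_\ell(s_{ij_1})$, an element $g \in G$ with $\pi_i(g)=w$ and $\pi_{j_1}(g)=1$. For the inductive step, take $g \in G$ witnessing some $w' \in \mathcal{B}_\ell\big(\sum_{m<k}s_{ij_m}+(k-2)v\big)$ as above, and $h \in G$ with $\pi_i(h)=c \in \mathcal{B}_\ell(s_{ij_k})$ and $\pi_{j_k}(h)=1$ (furnished by the pair-hypothesis for $(i,j_k)$). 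The commutator $[g,h] \in G$ projects to $1$ in each of positions $j_1,\ldots,j_k$ (at each such position one of the two arguments is the identity, so the commutator is $1$) and projects to $[w',c]$ in position $i$. Letting $w',c$ vary and using that the set of legal $i$-th coordinates at level $k$ is a closed subgroup, it contains the closed commutator subgroup
\[
\overline{\big[\mathcal{B}_\ell\big(\textstyle\sum_{m<k}s_{ij_m}+(k-2)v\big),\; \mathcal{B}_\ell(s_{ij_k})\big]} \;\supseteq\; \mathcal{B}_\ell\big(\textstyle\sum_{m\leq k}s_{ij_m}+(k-1)v\big),
\]
the last inclusion being the standard commutator estimate $[\mathcal{B}_\ell(a),\mathcal{B}_\ell(b)] \supseteq \mathcal{B}_\ell(a+b+v)$ on $\operatorname{SL}_2(\mathbb{Z}_\ell)$ (which ultimately rests on the fact that $\mathfrak{sl}_2$ is a perfect Lie algebra).

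The hard part will be securing this commutator estimate uniformly, in particular the $+v$ correction at $\ell=2$, which reflects the failure of the $\exp/\log$ correspondence below level $2$ and is precisely why the hypothesis requires $s_{ij} \geq 2$ for $\ell = 2$ and $s_{ij} \geq 1$ for $\ell=3$: these bounds ensure that each of the $n-2$ commutators encountered during the induction stays within the range where the estimate applies, and the cumulative $(n-2)v$ in $t_i$ is exactly the loss from these $n-2$ commutator steps.
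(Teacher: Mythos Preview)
The paper does not prove this lemma; it is quoted from the companion paper \cite{GeneralPink}, so there is no in-text argument to compare against. Your outline is precisely the natural ``integral Goursat--Ribet'' argument that the section title advertises, and it is almost certainly what the cited proof does: build up the $i$-th kernel $W^{(i)}$ by killing one extra coordinate at a time via commutators, noting that $[g,h]$ is trivial in every slot where either $g$ or $h$ is. Your induction is set up correctly, the bookkeeping $\sum_{m\le k}s_{ij_m}+(k-1)v$ matches the statement at $k=n-1$, and your observation that the set of admissible $i$-th coordinates at level $k$ is a (compact, hence closed) subgroup is what lets you pass from single commutators to the closed subgroup they generate.

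The one point worth a remark is the commutator estimate $\overline{[\mathcal{B}_\ell(a),\mathcal{B}_\ell(b)]}\supseteq\mathcal{B}_\ell(a+b+v)$, which you rightly flag. For odd $\ell$ it follows, as you say, from $[\mathfrak{sl}_2(\mathbb{Z}_\ell),\mathfrak{sl}_2(\mathbb{Z}_\ell)]=\mathfrak{sl}_2(\mathbb{Z}_\ell)$ (the factors of $2$ in the structure constants are units). For $\ell=2$ one computes $[\mathfrak{sl}_2(\mathbb{Z}_2),\mathfrak{sl}_2(\mathbb{Z}_2)]=\mathbb{Z}_2 h\oplus 2\mathbb{Z}_2 e\oplus 2\mathbb{Z}_2 f\supseteq 2\,\mathfrak{sl}_2(\mathbb{Z}_2)$, so the first-order commutator map only hits $I+2^{a+b+1}\mathfrak{sl}_2$ rather than $I+2^{a+b}\mathfrak{sl}_2$; a successive-approximation argument then gives $\mathcal{B}_2(a+b+1)$, exactly the $+v$ loss. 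The hypotheses $s_{ij}\ge 2$ (for $\ell=2$) and $s_{ij}\ge 1$ (for $\ell=3$) guarantee, as you note, that at every step both inputs to the commutator lie in the range where this estimate is valid.
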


\begin{corollary}\label{cor_ProductsOfManyCurves}
Let $G$ be a closed subgroup of $\prod_{i=1}^n \operatorname{SL}_2(\hat{\mathbb{Z}})$ with $n \geq 2$. Suppose that for every pair of indices $i \neq j$ there exists a subgroup $S^{(i,j)}$ of $\operatorname{SL}_2(\hat{\mathbb{Z}})^2$ with the following properties:
\begin{itemize}
\item the projection of $G$ on the direct factor $\operatorname{SL}_2(\hat{\mathbb{Z}}) \times \operatorname{SL}_2(\hat{\mathbb{Z}})$ corresponding to the pair of indices $(i,j)$ contains $S^{(i,j)}$;
\item $S^{(i,j)}$ decomposes as a direct product $\prod_{\ell \text{ prime}} S^{(i,j)}_\ell \subseteq \prod_{\ell} \operatorname{SL}_2(\mathbb{Z}_\ell)^2$;
\item for every prime $\ell$ the group $S^{(i,j)}_\ell$ is of the form $\mathcal{B}_\ell(f^{(i,j)}_\ell,f^{(i,j)}_\ell)$, where $f_\ell^{(i,j)}$ is a non-negative integer, with $f^{(i,j)}_2 \geq 2$ if $\ell = 2$ and $f^{(i,j)}_3 \geq 1$ if $\ell=3$.
\item for almost every $\ell$, the group $S^{(i,j)}_\ell$ is all of $\operatorname{SL}_2(\mathbb{Z}_\ell) \times \operatorname{SL}_2(\mathbb{Z}_\ell)$ (so $f_\ell^{(i,j)}=0$);
%the group $S^{(i,j)}_\ell$ is open in $\operatorname{SL}_2(\mathbb{Z}_\ell) \times \operatorname{SL}_2(\mathbb{Z}_\ell)$.
\end{itemize}
Denote by $c^{(i,j)}$ the index of $S^{(i,j)}$ in $\operatorname{SL}_2(\hat{\mathbb{Z}}) \times \operatorname{SL}_2(\hat{\mathbb{Z}})$ and $\displaystyle c= \max_{i \neq j} c^{(i,j)}$. The index of $G$ in $\prod_{i=1}^n \operatorname{SL}_2(\hat{\mathbb{Z}})$ is strictly less than
$
2^{3n(n-2)}\zeta(2)^{n(n-1)} c^{n(n-1)/2}.
$
\end{corollary}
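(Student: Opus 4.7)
First I would apply the preceding lemma at each prime $\ell$ separately. Writing $G_\ell$ for the image of $G$ in $\prod_{i=1}^n \operatorname{SL}_2(\mathbb{Z}_\ell)$, the hypothesis that the projection of $G$ onto each pair of $\operatorname{SL}_2(\hat{\mathbb{Z}})$-factors contains $\prod_\ell \mathcal{B}_\ell(f_\ell^{(i,j)}, f_\ell^{(i,j)})$ implies that the pair projection of $G_\ell$ contains $\mathcal{B}_\ell(f_\ell^{(i,j)}, f_\ell^{(i,j)})$, and the integrality hypotheses $f_2^{(i,j)} \geq 2$, $f_3^{(i,j)} \geq 1$ are precisely what is needed to invoke the lemma. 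Setting $s_i^{(\ell)} := \sum_{j \neq i} f_\ell^{(i,j)} + (n-2)v_\ell$, this yields the local containment $G_\ell \supseteq U_\ell := \prod_i \mathcal{B}_\ell(s_i^{(\ell)})$ at every prime.

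The next step is to assemble these into a global containment. Since $U_\ell = \operatorname{SL}_2(\mathbb{Z}_\ell)^n$ outside the finite set of primes appearing in the hypothesis, the product $U := \prod_\ell U_\ell$ is an open normal subgroup of $H := \operatorname{SL}_2(\hat{\mathbb{Z}})^n$; combining closedness of $G$ with a Goursat-type argument (exploiting normality of each $U_\ell$ and the almost-everywhere equality $G_\ell = \operatorname{SL}_2(\mathbb{Z}_\ell)^n$), I would deduce $G \supseteq U$, so that $[H : G] \leq [H : U]$. This is the single non-routine step: the local lemma carries no direct information about cross-prime compatibility, and it is here that the specific structure of the $\operatorname{SL}_2(\mathbb{Z}_\ell)^n$ (rather than that of a generic profinite group) must be used to preclude Goursat-type entanglement across primes.

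What remains is a direct estimate. From $[\operatorname{SL}_2(\mathbb{Z}_\ell) : \mathcal{B}_\ell(s)] = \ell^{3s}(1-\ell^{-2})$ for $s \geq 1$, together with the definition of $s_i^{(\ell)}$ and the observation $\prod_\ell \ell^{3(n-2)v_\ell} = 2^{3(n-2)}$ (only $\ell = 2$ contributes), the product $\prod_\ell \prod_i [\operatorname{SL}_2(\mathbb{Z}_\ell) : \mathcal{B}_\ell(s_i^{(\ell)})]$ rearranges into $2^{3n(n-2)} \cdot \prod_{(i,j):\, i\neq j} \prod_\ell \ell^{3 f_\ell^{(i,j)}}$ after discarding factors $(1-\ell^{-2}) \leq 1$. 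Trading each inner product for $\sqrt{c^{(i,j)}}$ via $\prod_\ell (1-\ell^{-2})^{-1} = \zeta(2)$ costs a factor of $\zeta(2)$ per ordered pair $(i,j)$, and bounding $c^{(i,j)} \leq c$ yields the stated estimate; the strict inequality comes from the fact that only finitely many primes carry $f_\ell^{(i,j)} \geq 1$, so the trade in fact uses only a proper subproduct of $\zeta(2)$.
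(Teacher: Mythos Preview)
Your proposal is correct and follows essentially the same route as the paper: apply the preceding lemma at each prime to obtain $G_\ell \supseteq \prod_i \mathcal{B}_\ell\bigl(\sum_{j\neq i} f_\ell^{(i,j)}+(n-2)v_\ell\bigr)$, then bound the global index by the product of the local indices and rewrite this in terms of the $c^{(i,j)}$ via the identity $\prod_\ell (1-\ell^{-2})^{-1}=\zeta(2)$. Your bookkeeping of the $(1-\ell^{-2})$ factors is arranged slightly differently but reaches the same inequality, and you are in fact more explicit than the paper about the passage from $G_\ell \supseteq U_\ell$ for each $\ell$ to $G \supseteq \prod_\ell U_\ell$, which the paper asserts without comment.
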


\begin{proof}
Let $\ell>3$ be a prime. If $S^{(i,j)}_\ell = \operatorname{SL}_2(\mathbb{Z}_\ell)^2$ for all $(i,j)$, then the previous lemma (with $s_{ij}=0$ for every pair of indices $(i,j)$) shows that $\prod_{k=1}^n \operatorname{SL}_2(\mathbb{Z}_\ell)$ is contained in $G$. % \todo{Can this be done via Goursat?}.
%Denote $c^{(i,j)}_\ell$ the index of $S_\ell^{(i,j)}$ in $\operatorname{SL}_2(\mathbb{Z}_\ell) \times \operatorname{SL}_2(\mathbb{Z}_\ell)$ and $\displaystyle c_\ell=\max_{i \neq j} c^{(i,j)}_\ell$.
Suppose on the other hand that either $\ell \leq 3$ or for at least one pair $(i,j)$ we have $S_\ell^{(i,j)} \neq \operatorname{SL}_2(\mathbb{Z}_\ell) \times \operatorname{SL}_2(\mathbb{Z}_\ell)$. 
%, and set $f_\ell=\displaystyle \max_{i \neq j} f^{(i,j)}_\ell$
The previous lemma tells us that the projection of $G$ on the direct factor $\prod_{i=1}^n \operatorname{SL}_2(\mathbb{Z}_\ell)$ of $\prod_{i=1}^n \operatorname{SL}_2(\hat{\mathbb{Z}})$ contains
\[
B_\ell\left(\sum_{j \neq 1} f_\ell^{(1,j)} + (n-2)v, \; \cdots,\; \sum_{j \neq n} f_\ell^{(n,j)} + (n-2)v\right)=\prod_{i} \mathcal{B}_\ell\left( \sum_{j \neq i} f_\ell^{(i,j)}+(n-2)v\right).
\]
Notice that the index of $\mathcal{B}_\ell(s)$ in $\operatorname{SL}_2(\mathbb{Z}_\ell)$, for $s \geq 1$, is $(\ell^2-1) \ell^{1+3(s-1)}<\ell^{3s}$, so the index of the above product in $\prod_{i=1}^n \operatorname{SL}_2(\mathbb{Z}_\ell)$ is bounded by
\[
\prod_{i=1}^n \left( \ell^{3 \sum_{j \neq i} f^{(i,j)}_\ell + 3(n-2)v}\right) = 2^{3n(n-2)v} \prod_{i=1}^n \prod_{j \neq i} \ell^{3 f_\ell^{(i,j)}}.
\]
Let now $\mathcal{P}=\left\{ 2,3 \right\} \cup \left\{ \ell \bigm\vert \exists (i,j) : S_\ell^{(i,j)} \neq \operatorname{SL}_2(\mathbb{Z}_\ell) \times \operatorname{SL}_2(\mathbb{Z}_\ell) \right\}$. By what we have just seen,
\[
\left[\prod_{k=1}^n \operatorname{SL}_2(\hat{\mathbb{Z}}) : G \right] \leq 2^{3n(n-2)} \prod_{\ell \in \mathcal{P}} \prod_{i=1}^n \prod_{j \neq i} \ell^{3 f_\ell^{(i,j)}}.%\\
\]

%\[
%\begin{aligned}
%\left[\prod_{k=1}^n \operatorname{SL}_2(\hat{\mathbb{Z}}) : G \right] & \leq \prod_{\ell \in \mathcal{P}} \left[ \operatorname{SL}_2(\mathbb{Z}_\ell)^n :B_\ell\left(\sum_{j \neq 1} f_\ell^{(j,1)} + (n-2)v, \ldots, \sum_{j \neq n} f_\ell^{(j,n)} + (n-2)v\right) \right] \\
%& =  2^{3n(n-2)} \prod_{\ell \in \mathcal{P}} \prod_{j=1}^n \prod_{i \neq j} \ell^{3 f_\ell^{(j,i)}}.%\\
%& = 2^{3n(n-2)}
%\end{aligned}
%\]

On the other hand, note that the index of $S_\ell^{(i,j)}$ in $\operatorname{SL}_2(\mathbb{Z}_\ell) \times \operatorname{SL}_2(\mathbb{Z}_\ell)$ is at least $\ell^{6f_\ell^{(i,j)}} \cdot \left(\frac{\ell^2-1}{\ell^2}\right)^2$ (with equality if $f_\ell^{(i,j)}\geq 1$), so the above product is bounded by
\[
\begin{aligned}
2^{3n(n-2)}\prod_{\ell \in \mathcal{P}} \prod_{i<j} & \left\{ \left[ \operatorname{SL}_2(\mathbb{Z}_\ell)^2 : S_\ell^{(i,j)} \right] \cdot \left(\frac{\ell^2}{\ell^2-1}\right)^2 \right\} \\ & < 2^{3n(n-2)} \prod_{\ell} \left(\frac{\ell^2}{\ell^2-1}\right)^{n(n-1)} \cdot \prod_{i<j} \prod_{\ell \in \mathcal{P}} \left[ \operatorname{SL}_2(\mathbb{Z}_\ell)^2 : S_\ell^{(i,j)} \right] \\
& \leq 2^{3n(n-2)} \zeta(2)^{n(n-1)} \prod_{i < j} c^{(i,j)} \\
& \leq 2^{3n(n-2)} \zeta(2)^{n(n-1)} c^{n(n-1)/2}.
\end{aligned}
\]
\end{proof}

%\begin{section}\label{sect_

\section{Lie subalgebras of $\mathfrak{sl}_2(\mathbb{Z}_\ell)^n$ and some Pink-type results}\label{sect_PinkResults}%Reduction to $K_1$ being nontrivial modulo $\ell^t$
%We will prove in this section that a Lie subalgebra of $\mathfrak{sl}_2(\mathbb{Z}_\ell)$ which is nontrivial modulo $\ell^t$ and is stable 
%We begin with the following lemma on conjugation-stable subspaces of $\mathfrak{sl}_2(\mathbb{Z}_\ell)$.
Let us briefly recall the construction (essentially due to Pink) of the $\mathbb{Z}_\ell$-Lie algebra associated with a subgroup of $\operatorname{GL}_2(\mathbb{Z}_\ell)^n$:
\begin{definition}{(cf.~\cite{MR1241950})}
Let $\ell$ be a prime. Define maps $\Theta_n$ as follows:
\[
\begin{array}{cccc}
\Theta_n : & \operatorname{GL}_2(\mathbb{Z}_\ell)^n  & \to & \bigoplus_{i=1}^n \mathfrak{sl}_2(\mathbb{Z}_\ell)\\
& (g_1,\ldots,g_n) & \mapsto & \left( g_1 -\frac{1}{2} \operatorname{tr}(g_1), \ldots, g_n -\frac{1}{2} \operatorname{tr}(g_n)  \right).
\end{array}
\]
If $G$ is a closed subgroup of $\operatorname{GL}_2(\mathbb{Z}_\ell)^n$ (resp.~of $\mathcal{B}_2(1,\ldots,1)$ in case $\ell=2$), define $L(G) \subseteq \mathfrak{sl}_2(\mathbb{Z}_\ell)^n$ to be the $\mathbb{Z}_\ell$-span of $\Theta_n(G)$. We call $L(G)$ the \textbf{Lie algebra} of $G$.
\end{definition}

The importance of this construction lies in the fact that it allows us to linearize the problem of showing that certain subgroups of $\operatorname{GL}_2(\mathbb{Z}_\ell)^n$ contain an explicit open neighbourhood of the identity: indeed, we have the following two results, for whose proof we refer the reader to \cite{GeneralPink}.

\begin{theorem}\label{thm_PinkGL22}{(\cite[Theorem 3.1]{GeneralPink})}
Let $\ell>2$ be a prime number and $G$ be a closed subgroup of $\operatorname{GL}_2(\mathbb{Z}_\ell) \times \operatorname{GL}_2(\mathbb{Z}_\ell)$. Let $G_1, G_2$ be the two projections of $G$ on the two factors $\operatorname{GL}_2(\mathbb{Z}_\ell)$, and let $n_1,n_2$ be positive integers such that $G_i$ contains $\mathcal{B}_\ell(n_i)$ for $i=1,2$. Suppose furthermore that for every $(g_1,g_2) \in G$ we have $\det(g_1)=\det(g_2)$. At least one of the following holds:
\begin{itemize}
\item $G$ contains $\mathcal{B}_\ell(20\max\{n_1,n_2\},20\max\{n_1,n_2\})$

\item there exists a subgroup $T$ of $G$, of index dividing $2 \cdot 48^2$, with the following properties:
\begin{itemize}
\item if $L(T)$ contains $\ell^k \mathfrak{sl}_2(\mathbb{Z}_\ell) \oplus \ell^k \mathfrak{sl}_2(\mathbb{Z}_\ell)$ for a certain integer $k$, then $T$ contains $\mathcal{B}_\ell(p,p)$, where
\[
p=2k+\max\left\{2k,8n_1,8n_2 \right\}.
\]We call this property $(\ast)$.
\item for any $(t_1,t_2)$ in $T$, if both $[t_1]$ and $[t_2]$ are multiples of the identity, then they are equal;
\item for any $(t_1,t_2)$ in $T$, the determinant of $t_1$ is a square in $\mathbb{Z}_\ell^\times$ (hence the same is true for the determinant of $t_2$).
\end{itemize}

\end{itemize}
\end{theorem}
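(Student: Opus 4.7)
The plan is to linearize the problem by replacing $G$ with its Pink Lie algebra $L(G) \subseteq \mathfrak{sl}_2(\mathbb{Z}_\ell) \oplus \mathfrak{sl}_2(\mathbb{Z}_\ell)$, and to run a Goursat-type analysis in that category. The hypothesis $G_i \supseteq \mathcal{B}_\ell(n_i)$ translates (using that for odd $\ell$ the elements $h-\tfrac{1}{2}\operatorname{tr}(h)$ span $\ell^{n_i}\mathfrak{sl}_2(\mathbb{Z}_\ell)$ as $h$ ranges over $\mathcal{B}_\ell(n_i)$) into $\mathfrak{h}_i := \pi_i L(G) \supseteq \ell^{n_i}\mathfrak{sl}_2(\mathbb{Z}_\ell)$, so each projection has explicitly controlled index in $\mathfrak{sl}_2(\mathbb{Z}_\ell)$. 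Letting $\mathfrak{k}_i = \ker\bigl(\pi_i|_{L(G)}\bigr)$, identified with a sub-Lie-algebra of the opposite factor, one has $L(G)\supseteq \mathfrak{k}_1 \oplus \mathfrak{k}_2$, and $L(G)/(\mathfrak{k}_1\oplus\mathfrak{k}_2)$ is a common Lie-algebra quotient of $\mathfrak{h}_1/\mathfrak{k}_1$ and $\mathfrak{h}_2/\mathfrak{k}_2$ realizing an isomorphism between them.

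The dichotomy is driven by the size of this common quotient. If it is small -- equivalently, if $\mathfrak{k}_1$ and $\mathfrak{k}_2$ each contain some $\ell^k\mathfrak{sl}_2(\mathbb{Z}_\ell)$ -- then combined with the largeness of the $\mathfrak{h}_i$ this should force $L(G) \supseteq \ell^{k'}\mathfrak{sl}_2(\mathbb{Z}_\ell)\oplus \ell^{k'}\mathfrak{sl}_2(\mathbb{Z}_\ell)$ for an explicit $k'$ in terms of $k,n_1,n_2$. I would then invoke the $\ell$-adic exponential (a homeomorphism $\ell \mathfrak{sl}_2(\mathbb{Z}_\ell) \xrightarrow{\sim} \mathcal{B}_\ell(1)$ for odd $\ell$) to convert this into $\mathcal{B}_\ell(p,p)\subseteq G$, giving the first alternative. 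The multiplicative factor $20$ and the shape $p=2k+\max\{2k,8n_1,8n_2\}$ come from bookkeeping the conductor losses in commutator estimates and in the passage from the $\mathbb{Z}_\ell$-span implicit in $L(G)$ back to honest group elements; the hypothesis $\det g_1 = \det g_2$ ensures that everything genuinely lives in $\mathfrak{sl}_2$ rather than $\mathfrak{gl}_2$.

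If instead the common quotient is large, then modulo $\ell$ the algebra $L(G)$ resembles the graph of an automorphism of (a quotient of) $\mathfrak{sl}_2(\mathbb{F}_\ell)$. Using $\operatorname{Aut}(\mathfrak{sl}_2)\cong\operatorname{PGL}_2$, this automorphism is conjugation by some $M \in \operatorname{PGL}_2(\mathbb{F}_\ell)$. To build the subgroup $T$ of the second alternative, I would intersect $G$ with the preimage of a suitable small-index subgroup of $G(\ell)\subseteq \operatorname{GL}_2(\mathbb{F}_\ell)^2$ chosen so that (i) the residual finite symmetries that obstruct the graph from being honestly inner are killed -- this accounts for the factor $48$ per factor, reflecting the worst exceptional finite subgroups of $\operatorname{PGL}_2$ -- and (ii) the common determinant lands in $(\mathbb{Z}_\ell^\times)^2$, costing the extra factor $2$. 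The scalar-compatibility and square-determinant items then follow directly from the construction of $T$, while property $(\ast)$ is obtained by rerunning the first branch inside $T$.

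The main obstacle is the quantitative Lie-algebra-to-group bridge: every commutator step and every use of the exponential costs a definite shift in conductor, and choreographing these so that the final exponent $p$ has the clean form in the statement and nothing depends on $\ell$ is the core technical difficulty, which is why the detailed argument is placed in \cite{GeneralPink}. A secondary hurdle is to ensure that the residual mod-$\ell$ symmetries contribute a bounded index independent of $\ell$; this rests on the classification of finite subgroups of $\operatorname{PGL}_2$, and on the fact that any inner automorphism of $\mathfrak{sl}_2(\mathbb{Z}_\ell)$ lifts to conjugation by a single element of $\operatorname{GL}_2(\mathbb{Z}_\ell)$.
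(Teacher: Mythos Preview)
The paper does not prove this theorem at all: it is stated as \cite[Theorem 3.1]{GeneralPink} and the proof is explicitly deferred to that companion paper (see the paragraph introducing theorems \ref{thm_PinkGL22} and \ref{thm_PinkGL222}, and the remark immediately following \ref{thm_PinkGL22}). So there is no proof here against which to compare your proposal.

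That said, your outline is a reasonable sketch of the strategy one expects in \cite{GeneralPink}: Goursat on the Pink Lie algebra $L(G)$, a dichotomy on the size of the common quotient, and in the ``graph'' branch a passage to a bounded-index subgroup $T$ using the classification of finite subgroups of $\operatorname{PGL}_2(\mathbb{F}_\ell)$ (whence the $48^2$) together with a restriction to square determinants (whence the factor $2$). You correctly identify that the quantitative Lie-algebra-to-group passage is the heart of the matter and is precisely what the companion paper supplies; note in particular that the constants $20$ and $p=2k+\max\{2k,8n_1,8n_2\}$ cannot be read off from the high-level picture you give and require the detailed conductor bookkeeping carried out there. Since you yourself flag this as the main obstacle and point to \cite{GeneralPink}, your proposal is best read as a correct summary of the architecture rather than as an independent proof.
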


\begin{remark} The last property of the group $T$ is not stated explicitly in \cite{GeneralPink}, but it is clear from the construction (see \cite[Proof of theorem 3.1 assuming theorem 3.2]{GeneralPink}).
\end{remark}

\begin{theorem}\label{thm_PinkGL222}{(\cite[Theorem 4.1]{GeneralPink})}
Let $G$ be a closed subgroup of $\operatorname{GL}_2(\mathbb{Z}_2) \times \operatorname{GL}_2(\mathbb{Z}_2)$ whose projection modulo 4 is trivial. Denote by $G_1, G_2$ the two projections of $G$ on the factors $\operatorname{GL}_2(\mathbb{Z}_2)$, and let $n_1 \geq 4, n_2 \geq 4$ be integers such that $G_i$ contains $\mathcal{B}_2(n_i)$. Suppose furthermore that for every $(g_1,g_2) \in G$ we have $\det(g_1)=\det(g_2)\equiv 1 \pmod 8$.
If $L(G)$ contains $2^k \mathfrak{sl}_2(\mathbb{Z}_2) \oplus 2^k \mathfrak{sl}_2(\mathbb{Z}_2)$ for a certain $k \geq 2$, then $G$ contains
\[
\mathcal{B}_2(12(k+11n_2+5n_1+12)+1,12(k+11n_1+5n_2+12)+1).\]
\end{theorem}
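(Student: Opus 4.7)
The strategy is to linearise via the Pink correspondence and then to track $2$-adic precision carefully through a sequence of approximate exponentials and commutator corrections. Since $G$ projects trivially modulo $4$ we have $G\subseteq \mathcal{B}_2(2,2)$, which is precisely the neighbourhood on which the $2$-adic $\log$ and $\exp$ series converge; combined with $\det(g_1)=\det(g_2)\equiv 1\pmod 8$, this forces $\log(G)\subseteq \mathfrak{sl}_2(\mathbb{Z}_2)^2$. The map $\Theta_2$ differs from $\log$ only by quadratic-and-higher terms controlled by the Baker--Campbell--Hausdorff formula, so $L(G)$ and $\log(G)$ span the same $\mathbb{Z}_2$-module up to a bounded loss of $2$-adic precision.

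First, I would prove a quantitative ``from $L(G)$ to $G$'' lemma: for every $(v_1,v_2)\in L(G)\cap 4\mathfrak{sl}_2(\mathbb{Z}_2)^2$ with both coordinates of valuation at least $j\geq 2$, there exists $(g_1,g_2)\in G$ with $g_i\equiv \exp(v_i)\pmod{2^{j+c}}$ for an explicit constant $c$ recording the loss incurred when rewriting a $\mathbb{Z}_2$-span of $\Theta_2$-values as an actual product in the group. The proof combines the $\mathbb{Z}_2$-linearity of $L(G)$, BCH on $\mathfrak{sl}_2(\mathbb{Z}_2)^2$, and the standard $2$-adic convergence estimates. Applied to the hypothesis $L(G)\supseteq 2^k\mathfrak{sl}_2(\mathbb{Z}_2)\oplus 2^k\mathfrak{sl}_2(\mathbb{Z}_2)$, it yields, for every $(u_1,u_2)\in 2^k\mathfrak{sl}_2(\mathbb{Z}_2)^2$, an element of $G$ that is an explicit $2$-adic approximation of $(\exp u_1,\exp u_2)$.

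Second, I would bootstrap these approximate factorised exponentials into exact ones using the projection hypothesis $G_i\supseteq\mathcal{B}_2(n_i)$. Given an approximate $(g_1,g_2)\in G$ with $g_2$ closer to $\operatorname{Id}$ than $g_1$, one selects a preimage $(h_1,h_2)\in G$ of a suitably chosen $h_1\in\mathcal{B}_2(n_1)\subseteq G_1$ and uses the commutator $[(g_1,g_2),(h_1,h_2)]$ together with similar conjugates and products to produce a new element of $G$ whose second coordinate is pushed further towards the identity at the cost of a controlled shift in the first coordinate. Iterating a bounded number of times kills the second coordinate exactly to the required precision, producing a copy of $\mathcal{B}_2(m_1)\times\{\operatorname{Id}\}$ inside $G$; the symmetric procedure yields $\{\operatorname{Id}\}\times\mathcal{B}_2(m_2)$, and together these generate $\mathcal{B}_2(m_1,m_2)$.

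The main obstacle is the precise bookkeeping of $2$-adic precision losses. Each BCH application and each commutator step loses a fixed, $\ell=2$-specific amount of precision, and one must prove that after the fixed number of iterations the cumulative loss is exactly the linear function of $k,n_1,n_2$ appearing in the statement. Concretely, the coefficient $12$ in the final bound counts the BCH/iteration steps; the asymmetric coefficients $11$ and $5$ weigh the contributions of the two projections $\mathcal{B}_2(n_j)$ according to which coordinate is being cleaned at each stage; and the trailing $+1$ reflects the customary $+v_\ell(2)=+1$ shift that pervades the $2$-adic case and cannot be eliminated without strengthening the hypotheses beyond projection triviality modulo $4$.
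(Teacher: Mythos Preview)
The paper does not actually prove this theorem: it is quoted from the companion paper \cite{GeneralPink}, with the proof explicitly deferred (``being rather lengthy''). So there is no in-paper argument to compare your proposal against line by line. Your high-level strategy --- pass from $L(G)$ to approximate group elements via the Pink/BCH correspondence, then iterate commutators against the projection hypotheses $G_i\supseteq\mathcal{B}_2(n_i)$ to clean one coordinate at a time --- is the standard shape such arguments take and is almost certainly the skeleton of what \cite{GeneralPink} does.

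That said, your write-up is a plan, not a proof, and it contains one genuine error and one genuine gap. The error: the assertion ``$\det(g_1)=\det(g_2)\equiv 1\pmod 8$ forces $\log(G)\subseteq\mathfrak{sl}_2(\mathbb{Z}_2)^2$'' is false. For $g\in\operatorname{GL}_2(\mathbb{Z}_2)$ one has $\operatorname{tr}(\log g)=\log(\det g)$, and $\det g\equiv 1\pmod 8$ only guarantees that $\log(\det g)$ lies in $8\mathbb{Z}_2$, not that it vanishes. What is true by construction is that $\Theta_2(G)\subseteq\mathfrak{sl}_2(\mathbb{Z}_2)^2$, and you must work with $\Theta_2$ (or with $\log$ minus its scalar part) rather than $\log$ itself; this distinction matters for the precision estimates, because the scalar discrepancy has to be controlled separately (this is exactly why the hypothesis $\det\equiv 1\pmod 8$ is there, and why the paper's later arguments with $\lambda_1,\lambda_2$ in Section~6 are needed).

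The gap: you identify the precision bookkeeping as ``the main obstacle'' and then do not do it. The closing paragraph offering a narrative for the coefficients $12$, $11$, $5$, $+1$ is post-hoc heuristics, not derivation; nothing in your sketch fixes the number of iterations at $12$ or explains why the two projections enter with weights $11$ and $5$ rather than, say, $8$ and $8$. A complete proof must exhibit the explicit commutator identities, quantify the precision gain at each step, and sum the losses to the stated bound --- and that is precisely the ``rather lengthy'' content the present paper outsources to \cite{GeneralPink}.
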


\medskip

Finally, we shall need the following simple lemma regarding conjugation-stable subalgebras of $\mathfrak{sl}_2(\mathbb{Z}_\ell)$:%, a result whose usefulness has been pointed out in the introduction:

\begin{lemma}\label{lemma_ConjStableSubspaces}{(\cite[Lemma 2.4]{GeneralPink})}
Let $t$ be a non-negative integer. Let $W \subseteq \mathfrak{sl}_2(\mathbb{Z}_\ell)$ be a Lie subalgebra that does not reduce to zero modulo $\ell^{t+1}$. Suppose that $W$ is stable under conjugation by $\mathcal{B}_\ell(s)$ for some non-negative integer $s$, where $s \geq 2$ if $\ell=2$ and $s \geq 1$ if $\ell=3$ or $5$. Then $W$ contains the open set $\ell^{t+4s+4v} \mathfrak{sl}_2(\mathbb{Z}_\ell)$.
%Let $\ell$ be a prime number, $t$ a non-negative integer, and $W \subseteq \mathfrak{sl}_2(\mathbb{Z}_\ell)$ a Lie subalgebra that does not reduce to zero modulo $\ell^{t+1}$ and that is stable under conjugation by $\mathcal{B}_\ell(s)$, where $s \geq 0$ is at least $2$ if $\ell=2$ and at least $1$ if $\ell=3$ or $5$ (no conditions are necessary if $\ell \geq 7$). The open set $\ell^{t+4s+4v} \mathfrak{sl}_2(\mathbb{Z}_\ell)$ is contained in $W$.
\end{lemma}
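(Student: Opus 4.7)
The plan is to linearize the problem and then use the simplicity of $\mathfrak{sl}_2$ together with the conjugation-stability hypothesis to propagate a single small-valuation element of $W$ into all three directions of $\mathfrak{sl}_2(\mathbb{Z}_\ell)$.

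First I would show that $W$ is a full-rank $\mathbb{Z}_\ell$-lattice in $\mathfrak{sl}_2(\mathbb{Z}_\ell)$. The subspace $W \otimes_{\mathbb{Z}_\ell} \mathbb{Q}_\ell$ of $\mathfrak{sl}_2(\mathbb{Q}_\ell)$ is nonzero by the hypothesis that $W$ does not vanish modulo $\ell^{t+1}$, and it is preserved by the adjoint action of $\mathcal{B}_\ell(s)$, which is an open subgroup of $\operatorname{SL}_2(\mathbb{Z}_\ell)$ whose associated Lie algebra is all of $\mathfrak{sl}_2(\mathbb{Q}_\ell)$. Differentiating the action, $W \otimes \mathbb{Q}_\ell$ becomes an ideal of $\mathfrak{sl}_2(\mathbb{Q}_\ell)$, and by simplicity it equals the whole algebra.

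Next I would pick $w = \ell^{a} w' \in W$ with $a \leq t$ and $w'$ primitive in $\mathfrak{sl}_2(\mathbb{Z}_\ell)$, and exploit conjugation by $\exp(\ell^s y) \in \mathcal{B}_\ell(s)$ for arbitrary $y \in \mathfrak{sl}_2(\mathbb{Z}_\ell)$; the conditions on $s$ at the small primes are precisely what makes this exponential converge to an element of the subgroup. The identity
\[
\operatorname{Ad}\bigl(\exp(\ell^s y)\bigr)(w) - w \;=\; \sum_{k \geq 1} \frac{\ell^{ks+a}}{k!}\operatorname{ad}(y)^{k}(w') \;\in\; W
\]
produces elements of $W$ close to $\ell^{s+a}[y, w']$ modulo $\ell^{2s+a-v}\mathfrak{sl}_2(\mathbb{Z}_\ell)$, and (by averaging over $\pm y$ to isolate the even part) close to $\tfrac{1}{2}\ell^{2s+a}\operatorname{ad}(y)^2(w')$ modulo a higher-order tail. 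Since $\mathfrak{sl}_2$ is simple, as $y$ varies the $\mathbb{Z}_\ell$-span of $w'$, $\operatorname{ad}(y)(w')$ and $\operatorname{ad}(y)^2(w')$ contains $\ell^{O(v)}\mathfrak{sl}_2(\mathbb{Z}_\ell)$; the $O(v)$ absorbs the explicit factors of $2 = \ell^{v}$ that appear in the adjoint formulas. Keeping track of the $\ell$-adic valuations across this expansion then yields $W \supseteq \ell^{a + O(s+v)} \mathfrak{sl}_2(\mathbb{Z}_\ell)$, and since $a \leq t$ this is the type of containment we want.

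The main obstacle is the integral book-keeping, particularly at $\ell = 2$. The worst case is a nilpotent $w'$ (for instance $w' = e_+$): here a single bracket $[y, w']$ fails to reach the opposite root direction, and one is forced to use the quadratic correction $\tfrac{1}{2}\operatorname{ad}(y)^2(w')$, whose denominator costs an extra $v$ in valuation. One must then aggregate several losses of this type — one $v$ from each denominator $k!$ in the exponential, one from the image lattice of $\operatorname{ad}(w')$ in $(w')^{\perp}$, and so on — and verify that at most two iterations of the adjoint action suffice to cover $\mathfrak{sl}_2(\mathbb{Z}_\ell)$. The generous constants $4s$ and $4v$ in the claimed bound are chosen precisely so as to absorb all these cumulative losses in every case.
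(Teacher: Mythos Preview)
The paper does not actually prove this lemma: it is quoted verbatim from the companion paper \cite{GeneralPink}, so there is no in-paper argument to compare against. Your sketch follows what is almost certainly the intended line --- linearize the $\mathcal{B}_\ell(s)$-conjugation via the exponential, extract approximate brackets $\ell^{s}[y,w]$ and $\tfrac{1}{2}\ell^{2s}\operatorname{ad}(y)^2(w)$, and iterate using the simplicity of $\mathfrak{sl}_2$ --- and the qualitative conclusion (that $W$ contains $\ell^{t+O(s+v)}\mathfrak{sl}_2(\mathbb{Z}_\ell)$) is correct.

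Two points need tightening before this is a proof of the stated bound. First, you invoke $\exp(\ell^s y)$ for \emph{arbitrary} $y \in \mathfrak{sl}_2(\mathbb{Z}_\ell)$, but for $\ell \geq 7$ the lemma allows $s=0$, and then the exponential of a general $y$ does not converge; you should restrict to nilpotent $y$ (the root vectors $e_\pm$), for which $\exp(\ell^s y)$ is a polynomial and lies in $\mathcal{B}_\ell(s)$ for every $s \geq 0$ --- this also has the pleasant side effect of killing all terms with $k \geq 3$ in the adjoint expansion. Second, your last paragraph asserts rather than proves that the accumulated losses fit inside $4s+4v$; to close the argument you must actually run the bookkeeping in the worst case (a nilpotent primitive $w'$, say $w' = e_+$), tracking the factors of $\ell^s$ from each conjugation step and the factors of $2$ from the quadratic term and from the structure constants of $\mathfrak{sl}_2$, and check that the total exponent does not exceed $t+4s+4v$.
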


\section{The automorphisms of $\mathfrak{sl}_2(\mathbb{Z}_\ell)$ are inner}\label{sect_AutomorphismsAreInner}
In this section we obtain a description of the automorphisms of $\mathfrak{sl}_2(\mathbb{Z}_\ell)$ which shows that -- in a suitable sense -- they are all inner. In order to establish the required result we first need a few simple preliminaries, starting with the following well-known version of Hensel's lemma:

\begin{lemma}\label{lemma_Hensel}
Let $p(x) \in \mathbb{Z}_\ell[x]$ be a monic polynomial and let $\alpha$ be an element of $\mathbb{Z}_\ell$. Suppose that $v_\ell(p(\alpha)) > 2v_\ell(p'(\alpha))$: then $p(x)$ admits a root $\bar{\alpha} \in \mathbb{Z}_\ell$ such that $v_\ell(\alpha-\bar{\alpha}) \geq v_\ell(p(\alpha))-v_\ell(p'(\alpha))$.
\end{lemma}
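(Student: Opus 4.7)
The plan is to run the standard Newton iteration, tracking valuations carefully. Set $k := v_\ell(p(\alpha))$ and $m := v_\ell(p'(\alpha))$, so by assumption $k > 2m$. Define inductively a sequence $(\alpha_n)_{n \geq 0}$ in $\mathbb{Z}_\ell$ by $\alpha_0 = \alpha$ and $\alpha_{n+1} = \alpha_n - p(\alpha_n)/p'(\alpha_n)$. I would show two invariants simultaneously by induction on $n$:
\[
v_\ell(p'(\alpha_n)) = m \quad\text{and}\quad v_\ell(p(\alpha_n)) \geq 2m + 2^n(k - 2m).
\]
The first invariant ensures that the Newton step is well-defined and that $\alpha_{n+1} - \alpha_n = -p(\alpha_n)/p'(\alpha_n)$ lies in $\mathbb{Z}_\ell$ with valuation $v_\ell(p(\alpha_n)) - m$; the second forces $v_\ell(p(\alpha_n)) \to \infty$.

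The inductive step is the Taylor expansion of $p$ and $p'$ around $\alpha_n$. Because $p$ has coefficients in $\mathbb{Z}_\ell$, for any $h \in \mathbb{Z}_\ell$ one has identities
\[
p(\alpha_n + h) = p(\alpha_n) + p'(\alpha_n)\, h + h^2 R_n, \qquad p'(\alpha_n + h) = p'(\alpha_n) + h S_n,
\]
with $R_n, S_n \in \mathbb{Z}_\ell$. Plugging in $h = -p(\alpha_n)/p'(\alpha_n)$ and using $v_\ell(h) = v_\ell(p(\alpha_n)) - m$: the linear terms in the first identity cancel by design, so $v_\ell(p(\alpha_{n+1})) \geq 2\bigl(v_\ell(p(\alpha_n)) - m\bigr)$, which gives the required quadratic growth. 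For $p'$, the second identity shows $p'(\alpha_{n+1}) \equiv p'(\alpha_n) \pmod{\ell^{v_\ell(h)}}$; since $v_\ell(h) \geq k - m > m$, the valuation of $p'$ is preserved.

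From the invariants, $(\alpha_n)$ is Cauchy since $v_\ell(\alpha_{n+1}-\alpha_n) \geq k - m + 2^n(k - 2m) - (k-m) + \cdots$, i.e.\ strictly increasing in $n$. Let $\bar{\alpha} = \lim \alpha_n \in \mathbb{Z}_\ell$; then $p(\bar{\alpha}) = \lim p(\alpha_n) = 0$ by continuity. Finally, writing $\bar{\alpha} - \alpha$ as the telescoping sum $\sum_{n \geq 0} (\alpha_{n+1} - \alpha_n)$ and using the ultrametric inequality, the valuation of the sum equals that of the first (smallest-valuation) term, giving $v_\ell(\bar{\alpha} - \alpha) \geq v_\ell(\alpha_1 - \alpha_0) = k - m$, as required.

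There is no real obstacle here: the only thing to be a little careful about is that the hypothesis $k > 2m$ is exactly what is needed both to start the iteration (so that $v_\ell(h) > m$ guarantees $p'(\alpha_1)$ retains valuation $m$) and to make $k - 2m$ strictly positive so that the exponent $2^n(k-2m)$ actually forces quadratic convergence. Once this is observed, the whole argument is routine and the claimed estimate falls out of the first Newton step.
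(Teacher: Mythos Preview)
Your argument is correct: it is the standard Newton-iteration proof of this strong form of Hensel's lemma, and the valuation bookkeeping is sound (the one garbled inequality in the Cauchy-sequence sentence is harmless, since the invariant $v_\ell(\alpha_{n+1}-\alpha_n)=v_\ell(p(\alpha_n))-m\geq m+2^n(k-2m)$ already gives what you need). The paper does not prove this lemma at all, stating it as well known, so there is nothing to compare against.
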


Here is the main tool we will use to produce approximate roots of polynomials:

\begin{lemma}\label{lemma_ApproximateEigenvalue1}
Let $\ell$ be a prime number, $n \geq 1, m \geq 1$, $g \in \operatorname{End} \left(\mathbb{Z}_\ell^m\right)$ and $p_g(t)$ the characteristic polynomial of $g$. Let furthermore $\lambda \in \mathbb{Z}_\ell, w \in \mathbb{Z}_\ell^m$ be such that $gw \equiv \lambda w \pmod{\ell^n}$. Suppose that at least one of the coordinates of $w$ has $\ell$-adic valuation at most $b$: then $p_g(\lambda) \equiv 0 \pmod{\ell^{n-b}}$.
\end{lemma}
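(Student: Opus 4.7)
The plan is to exploit the adjugate-matrix identity $\operatorname{adj}(M) \cdot M = \det(M) \cdot I$ applied to the matrix $M = \lambda I - g \in \operatorname{End}(\mathbb{Z}_\ell^m)$, whose determinant is precisely $p_g(\lambda)$. The hypothesis rewrites as $Mw \equiv 0 \pmod{\ell^n}$, so multiplying on the left by the adjugate gives
\[
p_g(\lambda) \cdot w \;=\; \operatorname{adj}(M) \cdot M \cdot w \;\equiv\; 0 \pmod{\ell^n}.
\]
Reading this coordinate by coordinate, we obtain $p_g(\lambda) \cdot w_i \equiv 0 \pmod{\ell^n}$ for every $i = 1,\ldots,m$.

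Now I would pick an index $i_0$ witnessing the assumption, namely one for which $v_\ell(w_{i_0}) \leq b$. From $v_\ell\bigl(p_g(\lambda) \cdot w_{i_0}\bigr) \geq n$ and the additivity of the $\ell$-adic valuation on $\mathbb{Z}_\ell$, it follows that
\[
v_\ell(p_g(\lambda)) \;\geq\; n - v_\ell(w_{i_0}) \;\geq\; n - b,
\]
which is exactly the stated conclusion $p_g(\lambda) \equiv 0 \pmod{\ell^{n-b}}$.

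There is no real obstacle here: the argument is a one-line application of Cramer/adjugate, and the only subtle point is simply that the adjugate of a matrix with $\mathbb{Z}_\ell$-entries again has $\mathbb{Z}_\ell$-entries (since it is built from polynomial expressions in the entries of $M$), so the congruence $Mw \equiv 0 \pmod{\ell^n}$ propagates to $\operatorname{adj}(M) \cdot M \cdot w \equiv 0 \pmod{\ell^n}$ without any denominators appearing. This kind of lemma is presumably meant to feed into Hensel's lemma \ref{lemma_Hensel}: once one also controls $v_\ell(p_g'(\lambda))$, the approximate eigenvalue $\lambda$ can be lifted to an actual root of $p_g$, which is how genuine eigenvalues of $g$ will be produced in the applications of section \ref{sect_AutomorphismsAreInner}.
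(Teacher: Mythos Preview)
Your proof is correct and is essentially identical to the paper's: both multiply the congruence $(g-\lambda\operatorname{Id})w\equiv 0\pmod{\ell^n}$ by the adjugate of $g-\lambda\operatorname{Id}$ (or equivalently $\lambda\operatorname{Id}-g$) to obtain $p_g(\lambda)\,w\equiv 0\pmod{\ell^n}$, and then read off the conclusion from a coordinate of $w$ with valuation at most $b$. Your extra remark that the adjugate has $\mathbb{Z}_\ell$-entries is the only point the paper leaves implicit.
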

% If furthermore $n > s+2v_\ell(\lambda)+2v$, then $g$ admits an eigenvalue $\tilde{\lambda} \in \mathbb{Z}_\ell$ that satisfies $v_\ell(\tilde{\lambda} -\lambda)>v+v_\ell(\lambda)$.

\begin{proof}
Denote by $(g-\lambda \operatorname{Id})^*$ the adjugate matrix of $(g-\lambda \operatorname{Id})$, that is the operator such that $(g-\lambda \operatorname{Id})^*(g-\lambda \operatorname{Id})=\det (g-\lambda \operatorname{Id}) \cdot \operatorname{Id}$. Multiplying the congruence $(g-\lambda \operatorname{Id})w \equiv 0 \pmod{\ell^n}$ on the left by $(g-\lambda \operatorname{Id})^*$ we obtain $\det(g-\lambda \operatorname{Id}) \cdot \operatorname{Id} w \equiv 0 \pmod{\ell^n}$, and by considering the coordinate of $w$ of smallest valuation we deduce $p_g(\lambda)=\det(g-\lambda \operatorname{Id}) \equiv 0 \pmod{\ell^{n-b}}$ as claimed.
\end{proof}
% Now the fact that $g$ is traceless implies that $p_g'(t)=2t$, so $v_\ell(p_g'(\lambda))=v+v_\ell(\lambda)$, and this is smaller than $\frac{n-s}{2}$ by hypothesis: Newton's lemma then yields the required $\tilde{\lambda}$.

%\medskip

The properties of the adjoint representation of $\mathfrak{sl}_2$ (or an immediate computation) also show:
\begin{lemma}\label{lemma_CommutatorEigenvalues}
Let $g \in \mathfrak{sl}_2(\mathbb{Z}_\ell)$. The linear operator $\mathcal{C}_g:=[g,\cdot]$ from $\mathfrak{sl}_2 (\mathbb{Z}_\ell)$ to itself has eigenvalues $0,\pm 2\mu$, where $\pm \mu$ are the eigenvalues of $g$, so $p_{\mathcal{C}_g}(t)=t(t^2-4\mu^2)$.
\end{lemma}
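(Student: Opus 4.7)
The plan is to reduce the lemma to a direct matrix computation in the standard basis of $\mathfrak{sl}_2$ and then to invoke the polynomial nature of the characteristic polynomial. Since $p_{\mathcal{C}_g}(t)$ depends polynomially on the entries of $g$, it suffices to carry out the computation on the generic element $g = aH + bE + cF$, where $(H,E,F)$ is the usual basis of $\mathfrak{sl}_2$ with $H = \operatorname{diag}(1,-1)$, $E = \bigl(\begin{smallmatrix}0&1\\0&0\end{smallmatrix}\bigr)$, $F = \bigl(\begin{smallmatrix}0&0\\1&0\end{smallmatrix}\bigr)$; the resulting polynomial identity in $\mathbb{Z}[a,b,c]$ will specialize to every $g \in \mathfrak{sl}_2(\mathbb{Z}_\ell)$, so no Hensel-type or completion issue intervenes.

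The first step is to compute the three commutators $[g,H]$, $[g,E]$, $[g,F]$ and read off the $3\times 3$ matrix $M$ representing $\mathcal{C}_g$ in the basis $(H,E,F)$. A quick calculation gives $[g,H] = -2bE + 2cF$, $[g,E] = -cH + 2aE$, $[g,F] = bH - 2aF$, so the columns of $M$ are $(0,-2b,2c)^T$, $(-c,2a,0)^T$ and $(b,0,-2a)^T$. The entries involve only $\pm a, \pm b, \pm c$ and factors of $2$, so this step is genuinely immediate.

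The second step is to compute $\det(tI - M)$ and compare it with the claimed formula. Expanding along the first row produces $t(t^2-4a^2) - 4bct = t\bigl(t^2 - 4(a^2+bc)\bigr)$. Since $a^2 + bc = -\det(g) = \mu^2$ (because $g$ has trace $0$ and characteristic polynomial $x^2 + \det(g) = x^2 - \mu^2$), this equals $t(t^2 - 4\mu^2)$, establishing the identity $p_{\mathcal{C}_g}(t) = t(t^2 - 4\mu^2)$ and in particular the claim about the eigenvalues. The only real ``obstacle'' is careful bookkeeping of signs in the commutator computation, which is routine. As a sanity check, in the regular semisimple case ($\mu\ne 0$) one may pass to $\overline{\mathbb{Q}_\ell}$, conjugate $g$ to $\mu H$, and recover the three eigenvalues directly from $[\mu H,H]=0$, $[\mu H,E]=2\mu E$, $[\mu H,F]=-2\mu F$.
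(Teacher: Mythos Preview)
Your proof is correct and amounts to precisely the ``immediate computation'' the paper alludes to (the paper offers no argument beyond citing the properties of the adjoint representation of $\mathfrak{sl}_2$ or this direct verification). Your bookkeeping of the commutators, the resulting $3\times 3$ matrix, and the identification $a^2+bc=-\det g=\mu^2$ are all accurate.
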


%Let us also recast Hensel's lemma into a form
Combining the previous results we obtain the following lemma, which will be very useful for our purposes:
\begin{lemma}\label{lemma_HenselsFailure}
Let $g$ be an element of $\mathfrak{sl}_2 (\mathbb{Z}_\ell)$, $w$ be a vector in $\mathbb{Z}_\ell^2$, and $\beta$ be the minimal valuation of the coefficients of $w$. Suppose $g w \equiv \lambda w \pmod{\ell^n}$. Then either $g$ has an eigenvalue $\nu$ such that $v_\ell(\nu-\lambda) \geq v_\ell(\lambda)+3$ or else $\beta$ is at least $n-2(2+v_\ell(\lambda))$.
\end{lemma}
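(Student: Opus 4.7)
The strategy is to combine the two preceding lemmas: use Lemma \ref{lemma_ApproximateEigenvalue1} to show that $\lambda$ is an approximate root of the characteristic polynomial $p_g$, then use Hensel's lemma (Lemma \ref{lemma_Hensel}) to upgrade this to an actual root of $p_g$, i.e.\ an eigenvalue of $g$.

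In more detail: since $g \in \mathfrak{sl}_2(\mathbb{Z}_\ell)$ is traceless, its characteristic polynomial is $p_g(t) = t^2 + \det(g)$, so $p_g'(t) = 2t$ and $v_\ell(p_g'(\lambda)) = v + v_\ell(\lambda)$. Applying Lemma \ref{lemma_ApproximateEigenvalue1} to $g \in \operatorname{End}(\mathbb{Z}_\ell^2)$ with $b = \beta$ gives $v_\ell(p_g(\lambda)) \geq n - \beta$.

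I prove the contrapositive: assume $\beta < n - 2(2 + v_\ell(\lambda))$, i.e.\ $n - \beta \geq 2 v_\ell(\lambda) + 5$. Then
\[
v_\ell(p_g(\lambda)) \geq n - \beta \geq 2v_\ell(\lambda) + 5 > 2(v + v_\ell(\lambda)) = 2 v_\ell(p_g'(\lambda)),
\]
so the hypothesis of Lemma \ref{lemma_Hensel} is satisfied and $p_g$ admits a root $\nu \in \mathbb{Z}_\ell$ — an eigenvalue of $g$ — with
\[
v_\ell(\nu - \lambda) \geq v_\ell(p_g(\lambda)) - v_\ell(p_g'(\lambda)) \geq (n - \beta) - (v + v_\ell(\lambda)) \geq v_\ell(\lambda) + 5 - v \geq v_\ell(\lambda) + 3,
\]
which is the required conclusion.

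The argument is essentially bookkeeping of valuations once the two earlier lemmas are in place; the only mild subtlety is tracking the extra factor of $2$ in $p_g'(t) = 2t$ in the $\ell = 2$ case, which is responsible for the precise constant $2(2 + v_\ell(\lambda))$ in the statement (rather than something smaller for odd $\ell$). I do not foresee any real obstacle.
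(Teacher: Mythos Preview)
Your proof is correct and follows essentially the same route as the paper's: apply Lemma~\ref{lemma_ApproximateEigenvalue1} to get $v_\ell(p_g(\lambda)) \geq n-\beta$, verify the Hensel hypothesis using $p_g'(t)=2t$, and then invoke Lemma~\ref{lemma_Hensel} to produce the eigenvalue $\nu$. The bookkeeping of valuations matches the paper's line by line (the paper writes $p_g(t)=t^2-\mu^2$ rather than $t^2+\det g$, but this is of course the same polynomial).
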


\begin{proof}
Let $\pm \mu$ be the eigenvalues of $g$.
From lemma \ref{lemma_ApproximateEigenvalue1} we deduce that $v_\ell(p_g(\lambda)) \geq n-\beta$; notice further that $p_g(t)=t^2-\mu^2$, so $p_g'(t)=2t$. Suppose that $\beta < n-2(2+v_\ell(\lambda))$: then $n-\beta>2(2+v_\ell(\lambda))>2v_\ell(p_g'(\lambda))$, and by Hensel's lemma $p_g(t)$ has a root $\nu$ such that 
\[v_\ell(\nu-\lambda) \geq v_\ell (p_g(\lambda)) - v_\ell(p'_g(\lambda)) \geq n-\beta-v-v_\ell(\lambda) \geq v_\ell(\lambda)+3.\]
\end{proof}
%by Hensel's lemma we have 

%It follows from Hensel's lemma that if $\beta < n-2(v+v_\ell(\lambda))$, i.e. $n-\beta>2v_\ell(p_g'(\lambda))=2(v+v_\ell(\lambda))$, then $p_g(t)$ has a root $\nu$ such that $v_\ell(\nu-\lambda) \geq n-\beta-v-v_\ell(\lambda)$.

We now come to the central result of this section, which as anticipated is essentially a description of the Lie algebra automorphisms of (the finite quotients of) $\mathfrak{sl}_2(\mathbb{Z}_\ell)$. 

%Before stating it, though, we take a moment to fix our notation:
%
%\begin{definition}
%Let $L_1$ be a Lie subalgebra of $\mathfrak{sl}_2(\mathbb{Z}_\ell)$. A linear map $\varphi:L_1 \to \mathfrak{sl}_2(\mathbb{Z}_\ell)$ is said to be an \textbf{approximate (Lie algebra) morphism of level $n$} if for every $a,b \in L_1$ we have
%\[
%(\ast) \quad [\varphi(a),\varphi(b)] \equiv \varphi([a,b]) \pmod{\ell^n}.
%\]
%\end{definition}

\smallskip

\noindent\textbf{Notation.} For the remainder of this section, in order to make notation lighter, when $a$ is a positive integer we write $x=y+O(a)$ for $x \equiv y \pmod{\ell^a}$.

%\noindent\textbf{Notation.} For a Lie subalgebra $L$ of $\mathfrak{sl}_2(\mathbb{Z}_\ell)$ that contains 

\begin{proposition}\label{prop_ApproximateMorphismsAreInner}
Let $L_1$ be a subalgebra of $\mathfrak{sl}_2(\mathbb{Z}_\ell)$ and $n \geq 1, s \geq 0$ be integers. Suppose that $L_1$ contains $\ell^s\mathfrak{sl}_2(\mathbb{Z}_\ell)$ and that $\varphi: L_1 \to \mathfrak{sl}_2(\mathbb{Z}_\ell)$ is a linear map such that
\[
(\ast) \quad [\varphi(a),\varphi(b)] \equiv \varphi([a,b]) \pmod{\ell^n} \quad \forall a,b \in \ell^s\mathfrak{sl}_2(\mathbb{Z}_\ell).
\]

Define
\[
x=\varphi\left(\ell^s \cdot \left(\begin{matrix} 0 & 1 \\ 0 & 0 \end{matrix} \right)\right), \; y=\varphi\left(\ell^s \cdot \left(\begin{matrix} 0 & 0 \\ 1 & 0 \end{matrix} \right)\right), \; h=\varphi\left(\ell^s \cdot \left(\begin{matrix} 1 & 0 \\ 0 & -1 \end{matrix} \right)\right)
\]
and let $\alpha$ be the minimal integer such that $x,y$ are both nonzero modulo $\ell^{\alpha+1}$.
Suppose that $n \geq \alpha+10s+5v+6$.
There exists a matrix $M \in M_2\left(\mathbb{Z}_\ell\right)$, at least one of whose coefficients is nonzero modulo $\ell$, such that for every $w \in \left(\mathbb{Z}_\ell\right)^2$ and every $g_1 \in L_1$ we have
\begin{equation}\label{eqn_InnerMorphisms}
M(g_1\cdot w) \equiv \varphi(g_1) \cdot M(w) \pmod{\ell^{n-\alpha-6s-4v-6}}.
\end{equation}
Furthermore, $\det(M)$ does not vanish modulo $\ell^{4s+v}$, and for every $g_1$ in $L_1$ we have
\[
\operatorname{tr} \left( \varphi(g_1)^2 \right) \equiv \operatorname{tr} \left(g_1^2\right) \pmod{\ell^{n-\alpha-10s-5v-6}}
\]
and
\[
\varphi(g_1) \equiv Mg_1M^{-1} \pmod{\ell^{n-\alpha-10s-5v-6}}, \quad M^{-1}\varphi(g_1)M \equiv g_1 \pmod{\ell^{n-\alpha-10s-5v-6}}
\]
\end{proposition}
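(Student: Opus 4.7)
The plan is to reconstruct, modulo a controlled power of $\ell$, a change-of-basis matrix $M$ that approximately conjugates the standard $\mathfrak{sl}_2$-triple $(h_0,e_0,f_0) \in (\ell^s\mathfrak{sl}_2)^3$ into $(h,x,y)$. Hypothesis $(\ast)$, applied to the standard relations between $e_0,f_0,h_0$, immediately gives the approximate $\mathfrak{sl}_2$-triple relations
\[
[h,x]\equiv 2\ell^s x,\qquad [h,y]\equiv -2\ell^s y,\qquad [x,y]\equiv \ell^s h\pmod{\ell^n}.
\]

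The first step is to upgrade these commutator relations to one-sided identities. The cyclic trace identity $\operatorname{tr}(x[h,x])=0$, combined with $[h,x]\equiv 2\ell^s x$, gives $\operatorname{tr}(x^2)\equiv 0$; since every trace-zero $2\times 2$ matrix satisfies $a^2=-\det(a)\operatorname{Id}$ by Cayley--Hamilton, this upgrades to $x^2\equiv 0$ modulo a power of $\ell$ slightly smaller than $\ell^n$. Multiplying $\ell^s h\equiv xy-yx$ on the left and right by $x$ and using $x^2\approx 0$ yields $\ell^s hx\equiv xyx\equiv -\ell^s xh$, and combining with $[h,x]\equiv 2\ell^s x$ produces the refined relations
\[
hx\equiv \ell^s x,\qquad xh\equiv -\ell^s x,
\]
together with their analogues for $y$ with opposite sign. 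The two crucial consequences are that every column of $x$ is an approximate $\ell^s$-eigenvector of $h$ lying in the approximate kernel of $x$, and every column of $y$ is an approximate $(-\ell^s)$-eigenvector of $h$ lying in the approximate kernel of $y$.

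Next I would choose $i_x,i_y\in\{1,2\}$ so that the $i_x$-th column $u$ of $x$ and the $i_y$-th column $v$ of $y$ realise the valuations $v_\ell(x)$ and $v_\ell(y)$ respectively (so $v_\ell(u),v_\ell(v)\leq\alpha$), and set $M:=(u\mid v)$, rescaling by a power of $\ell$ at the end to ensure $v_\ell(M)=0$. The relations $Mh_0\equiv hM$, $xu\equiv 0$ and $yv\equiv 0$ follow directly from the refined identities. For the remaining cross relations $xv\equiv \ell^s u$ and $yu\equiv\ell^s v$, one notes that $h(xv)\equiv\ell^s (xv)$ forces $xv$ to be an approximate multiple of $u$ (the $\ell^s$-eigenspace of $h$ being one-dimensional modulo small errors); the scalar is pinned down to $\ell^s$ by applying $[x,y]\equiv\ell^s h$ to the standard basis vector $e_{i_y}$, and the symmetric argument handles $yu$. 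This establishes $Mg\equiv\varphi(g)M$ on the generators $e_0,f_0,h_0$ of $\ell^s\mathfrak{sl}_2$, and linearity of $\varphi$ extends it to the whole of $\ell^s\mathfrak{sl}_2$. For a general $g_1\in L_1$ one applies the identity to $\ell^s g_1\in\ell^s\mathfrak{sl}_2$ and divides by $\ell^s$, losing a further factor of $\ell^s$ in precision.

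For the non-degeneracy of $M$, the vectors $u$ and $v$ lie (modulo small errors) in the two distinct $\pm\ell^s$-eigenspaces of $h$ and are therefore approximately transverse; a direct expansion of $\det M$ using the approximate eigenvector relations for $h$ bounds $v_\ell(\det M)$ by $4s+v-1$. Inverting $M$ then costs at most this many powers of $\ell$ in precision, and $\varphi(g_1)\equiv Mg_1M^{-1}$ follows at the claimed precision; the trace identity is immediate from cyclicity, since $\operatorname{tr}(\varphi(g_1)^2)\equiv\operatorname{tr}((Mg_1M^{-1})^2)=\operatorname{tr}(g_1^2)$. The main obstacle throughout is the systematic bookkeeping of $\ell$-adic precisions: each passage (deriving the one-sided relations, extracting eigenvector information from matrix entries, identifying the scalar $\ell^s$, extending linearly to $L_1$, inverting $M$, and clearing the $\ell^s$-rescaling) costs a controlled but non-trivial loss, and matching the stated constants $n-\alpha-6s-4v-6$ and $n-\alpha-10s-5v-6$ requires tracking every valuation carefully. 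The standing hypothesis $n\geq\alpha+10s+5v+6$ is precisely what ensures that no precision drops below zero at any step.
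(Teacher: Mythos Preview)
Your overall strategy is genuinely different from the paper's. The paper applies Hensel's lemma to the characteristic polynomial of the adjoint operator $[h,\cdot]$ to prove that $h$ has \emph{exact} eigenvalues $\pm\ell^s+O(n-\alpha-2s-4v)$ in $\mathbb{Z}_\ell$, picks a genuine eigenvector $v_+$ for the eigenvalue $\mu_+\approx\ell^s$, and then sets $v_-:=yv_+$ and $\tilde M:=(\ell^s v_+\mid v_-)$; the verification that $xv_+\approx 0$ and $yv_-\approx 0$ is done by the ``forbidden eigenvalue'' argument of lemma~\ref{lemma_HenselsFailure}. Your route via the trace identity $\operatorname{tr}(x[h,x])=0$, giving $x^2\equiv 0$ and then the one-sided relations $hx\equiv\ell^s x$, $xh\equiv-\ell^s x$ directly, is slicker than the paper's Hensel step and is correct as far as it goes.

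The gap is in the construction of $M$. Your claim that ``the scalar is pinned down to $\ell^s$'' in $xv\equiv\ell^s u$ is false when $u$ and $v$ are chosen \emph{independently} as columns of $x$ and of $y$. Take $s=0$, $\ell$ odd, $L_1=\mathfrak{sl}_2(\mathbb{Z}_\ell)$, and the honest automorphism $\varphi$ sending $e_0\mapsto ae_0$, $f_0\mapsto a^{-1}f_0$, $h_0\mapsto h_0$ for a unit $a$. Then $x=ae_0$, $y=a^{-1}f_0$, your recipe forces $u=(a,0)^T$ and $v=(0,a^{-1})^T$, and one computes $xv=(1,0)^T=a^{-1}u$, not $u$. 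Your $M=\operatorname{diag}(a,a^{-1})$ then satisfies $Me_0M^{-1}=a^2e_0\neq x$. In general the relations you have only constrain the \emph{product} of the two cross-scalars to be $\ell^{2s}$, and applying $[x,y]\equiv\ell^s h$ to $e_{i_y}$ does not help because the extra term $y(xe_{i_y})$ involves the \emph{other} column of $x$, which you have not related to $u$. What rescues the argument is precisely the paper's move of making the second column depend on the first: with $v:=yu$ in place of a column of $y$, the relation $xv=xyu\equiv(\ell^s h+yx)u\equiv\ell^{2s}u$ comes out with the right constant, and the rest of your outline goes through.
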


\begin{remark}
A moment's thought (considering the limiting cases when $s$ or $\alpha$ become very large) will reveal that it is indeed necessary for all three parameters $n$, $s$, and $\alpha$ to appear in the conclusion of the proposition. 
%The reader might wonder whether it is really necessary for the three parameters $n,\alpha$ and $s$ to all appear in equation \eqref{eqn_InnerMorphisms}. The answer is yes. This is apparent for $n$, if the result is to say something nontrivial about $\varphi$. Consider next the limiting case where $\varphi \equiv 0$ (i.e. $\alpha$ goes to infinity): this map satisfies the hypotheses in the proposition for every $n$, but it is easy to realize that (independently of $n$) the equality
%\[
%M(g_1\cdot w) \equiv \varphi(g_1) \cdot M(w) = 0 \pmod{\ell^{N}}
%\]
%can only hold for bounded $N$; of course a similar conclusion holds if $\alpha$ stays finite, but is very large. Finally, choose an $n$ and any linear map $\varphi$ and suppose $s$ is sent to infinity. For $s$ large enough, the condition in the proposition will become void, since both sides of the equality will automatically be 0 modulo $\ell^n$: but then we cannot hope to deduce anything meaningful about $\varphi$, so that $s$, too, has to appear in the conclusion.
The question of whether the \textit{dependence} on these parameters is optimal, on the other hand, is far more complicated, and there is almost certainly room for improvement.
\end{remark}

%\medskip

Here again let us say a few words about the method of proof before fiddling with the technical details. To simplify matters, consider the algebra $L=\mathfrak{sl}_2(\mathbb{Q}_\ell)$. Proving that every automorphism of $L$ is inner basically boils down to showing that the only 2-dimensional representation of $\mathfrak{sl}_2(\mathbb{Q}_\ell)$ is the standard one, a result which is usually proved through the `highest weight vector' machinery: one shows that it is possible to choose an eigenvector $v$ for $h$ that is killed by $x$, and then describes its full orbit under the action of $x,y,h$. More precisely, one shows that $yv$ is an eigenvector for $h$, that $xyv$ is proportional to $v$, and that $y^2v=0$.

The proof that follows mimics this very argument by producing a vector $v_+$, by definition an eigenvector for $h$, which plays the role of the highest weight vector, and subsequently finding its orbit under the action of $h,x,y$. The main difficulty lies in the initial step, where we need to prove that the eigenvalues of $h$ lie in $\mathbb{Z}_\ell$ and are of a certain form. Once this is done, most of the proof looks very much like the one for $\mathfrak{sl}_2(\mathbb{Q}_\ell)$, with the additional complication that we have to keep track of valuations along the way.

\medskip

\begin{proof}
Denote by $\mathcal{C}_{h}$ the linear endomorphism of $\mathfrak{sl}_2(\mathbb{Z}_\ell) \cong \mathbb{Z}_\ell^3$ given by taking the commutator with $h$. It is clear that 
\[
\mathcal{C}_{h}(x) = [h,x] \equiv \varphi \left[\ell^s \cdot \left(\begin{matrix} 1 & 0 \\ 0 & -1 \end{matrix} \right), \ell^s \cdot \left(\begin{matrix} 0 & 1 \\ 0 & 0 \end{matrix} \right) \right] \equiv \varphi\left( 2\ell^s \cdot \ell^s \cdot \left(\begin{matrix} 0 & 1 \\ 0 & 0 \end{matrix} \right) \right) \equiv 2\ell^s x \pmod{\ell^n},
\]
so $x$ is an (approximate) eigenvector of $\mathcal{C}_h$ associated with the (approximate) eigenvalue $2\ell^s$. Lemma \ref{lemma_ApproximateEigenvalue1} yields
\[
p_{\mathcal{C}_h}(2\ell^s) \equiv 0 \pmod{\ell^{n-\alpha}}.
\]

If we let $\pm \mu$ denote the eigenvalues of $h$, then $p_{\mathcal{C}_h}'(t)=(t^2-4\mu^2)+2t^2$, and evaluating at $2\ell^s$ we find
\[
p_{\mathcal{C}_h}'(2\ell^s)=4(\ell^{2s}-\mu^2)+8\ell^{2s}=\frac{p_{\mathcal{C}_h}(2\ell^s)}{2\ell^s}+8\ell^{2s}.
\]

To estimate the $\ell$-adic valuation of this last expression simply observe that
\[
v_\ell\left( \frac{p_{\mathcal{C}_h}(2\ell^s)}{2\ell^s} \right) = v_\ell \left( p_{\mathcal{C}_h}(2\ell^s) \right) - v_\ell(2)-s \geq n-\alpha-v-s >  3v+2s=v_\ell(8\ell^{2s}),
\]
so $v_\ell\left(p_{\mathcal{C}_h}'(2\ell^s)\right)=v_\ell \left(8\ell^{2s}\right)=3v+2s$. By Hensel's lemma (lemma \ref{lemma_Hensel}), $p_{\mathcal{C}_h}(t)$ admits a root $\lambda \in \mathbb{Z}_\ell$ such that
\[
v_\ell(\lambda - 2\ell^s) \geq v_\ell(p_{\mathcal{C}_h}(2\ell^s)  ) - v_\ell(p_{\mathcal{C}_h}'(2\ell^s)) \geq n-\alpha-2s-3v > 2s+1.
\]

Note that $\lambda$ cannot be zero, because clearly $v_\ell(0 - 2\ell^s)=v+s$ is strictly smaller than $v_\ell(\lambda-2\ell^s)$. It follows that $\lambda$ is one of the other two roots of $p_{\mathcal{C}_h}(t)$, namely $\pm 2 \mu$, and hence
\[
\pm \mu = \pm \frac{1}{2} \left( 2\ell^s + O(n-\alpha-2s-3v) \right) = \pm \ell^s(1+O(n-\alpha-3s-4v)).
\]
%
%$\lambda^2=4\mu^2$. This gives us a way to estimate $\mu^2$: indeed we have $4 \mu^2 = \lambda^2= (2\ell^s+O(n-\alpha-2s-3v))^2$, whence
%\[
%4\mu^2 = 4\ell^{2s} + O(n-\alpha-s-v) \Rightarrow \mu^2 = \ell^{2s}(1 + O(n-\alpha-3s-3v)) \Rightarrow \pm \mu=\pm \ell^s(1+O(n-\alpha-3s-4v)).
%\]
To sum up, the two eigenvalues of $h$ belong to $\mathbb{Z}_\ell$ and are of the form $\pm \ell^s + O(n-\alpha-2s-4v)$, and in particular of the form $\pm \ell^s +O(s+4)$. Let $\mu_{+}$ be the one of the form $\ell^s+O(n-\alpha-2s-4v)$ and $v_{+} \in \mathbb{Z}_\ell^2$ be a corresponding eigenvector, normalized in such a way that at least one of the two coordinates is an $\ell$-adic unit. Set furthermore $v_-=yv_{+}$.

As anticipated, our next objective is to describe the action of $x,y,h$ on $v_{\pm}$. We expect $v_+$ to be annihilated by $x$ and $v_-$ to be an eigenvector for $h$ that is annihilated by $y$: of course this is not going to be exactly true at all orders, but only up to a certain error term that depends on $n$, $\alpha$ and $s$.
Let $\beta$ be the minimal valuation of the coordinates of $xv_+$: this is a number we want to show to be large.

The idea is that if $xv_+$ were not very close to zero, then it would be an eigenvector of $h$ associated with an eigenvalue that $h$ does not possess.  Note that \[h(xv_+) \equiv [h,x]v_{+}+xh v_{+} \equiv (2\ell^s+\mu_+)xv_+ \pmod{\ell^n},\] so by lemma \ref{lemma_HenselsFailure} either $h$ has an eigenvalue $\xi$ such that $v_\ell(\xi-(\mu_++2\ell^s)) \geq 3+v_\ell(\mu_++2\ell^s) \geq s+3$ or $\beta \geq n-2(2+v_\ell(\mu_++2\ell^s))$.
Note now that we cannot be in the first case: indeed $h$ would have an eigenvalue of the form $3\ell^s+O(s+3)$, but we have already seen that the eigenvalues of $h$ are $\pm \ell^s + O(s+4)$, contradiction. Hence we are in the second situation, and furthermore $v_\ell(\mu_++2\ell^s) \leq s+1$: hence $\beta \geq n-2(2+v_\ell(\mu_++2\ell^s)) \geq n-2s-6$, and by definition of $\beta$ this means $xv_{+} \equiv 0 \pmod{\ell^{n-2(s+3)}}$. 
Next we compute
\begin{equation}\label{eq_ActionhOnVMinus}
\begin{aligned}
hv_{-} & = hyv_{+} \\
       & =[h,y]v_{+}+yhv_{+} \\
			 & =-2\ell^s\cdot yv_{+}+y(\mu_{+}v_{+}) +O(n) \\
			 & =(\mu_{+}-2\ell^s)v_{-} +O(n) \\
			 & = (-\ell^s+O(n-\alpha-2s-4v))v_{-} + O(n) \\
			 & = -\ell^s v_{-} +O(n-\alpha-2s-4v),
\end{aligned}
\end{equation}
\begin{equation}\label{eq_ActionOnVMinus}
\begin{aligned}
xv_{-} & = xyv_{+} \\
       & =[x,y]v_{+}+yxv_{+} \\
			 & =\ell^s hv_{+}+O(n-2(s+3)) \\
			 & =\ell^s \mu_{+}v_{+} + O(n-2(s+3)) \\
			 & =\ell^s \left( \ell^s+O(n-\alpha-2s-4v) \right) v_{+} + O(n-2(s+3)) \\
			 %& =\ell^{2s}v_{+}  + O(n-\alpha-s-4v) + O(n-2(s+3)) \\
			 & =\ell^{2s}v_{+} + O(n-\alpha-2(s+3));
\end{aligned}
\end{equation}
this settles the question of the action of $h$ and $x$ on $v_-$. We are left with showing that $v_-$ is (approximately) killed by $y$. Again, we do this by showing that $yv_-$ -- unless it is very close to 0 -- yields an eigenvector of $h$ associated with an eigenvalue that $h$ does not possess:
\[
\begin{aligned}
h \cdot yv_{-}& =[h,y]v_{-}+yhv_{-} \\
              & =-2\ell^s \cdot yv_{-}+ y \left( (-\ell^s) +O(n-\alpha-2s-4v) \right) v_{-} + O(n)\\
							& =-3\ell^s yv_{-} +O(n-\alpha-2s-4v),
\end{aligned}
\]
so that $yv_{-}$ is an (approximate) eigenvector of $h$, associated with the (approximate) eigenvalue $-3\ell^s$. Let $\gamma$ be minimal among the valuations of the coefficients of $yv_{-}$. Apply lemma \ref{lemma_HenselsFailure}: either $\gamma \geq n-\alpha-2s-4v-2(2+v_\ell(-3\ell^s)) \geq n-\alpha-4s-4v-6$ or $h$ has an eigenvalue $\nu$ satisfying $v_\ell(\nu+3\ell^s) \geq v_\ell(-3\ell^s)+3 \geq s+3$. This second possibility contradicts what we have already proven on the eigenvalues of $h$, hence $\gamma \geq n-\alpha-4s-4v-6$, that is to say $yv_- = O(n-\alpha-4s-4v-6)$.

%: if we let $\gamma$ be minimal among the valuations of the coefficients of $yv_{-}$ we deduce $v_\ell (p_h(-3\ell^s))\geq n-3s-2v-\alpha-\gamma$. Again this cannot lift to $\mathbb{Z}_\ell$, so Newton's lemma must fail, which implies
%\[
%v_\ell(p_h(-3\ell^s)) \leq 2v_\ell(p_h'(-3\ell^s)) \quad \mbox{ or } \quad v_\ell(p_h(-3\ell^s))-v_\ell(p_h'(-3\ell^s)) \leq s+1.
%\]
%
%
%Using $v_\ell(p_h'(-3\ell^s))=v_\ell(-6\ell^s) \leq s+1$ we then find
%\[
 %v_\ell(p_h(-3\ell^s)) \leq 2v_\ell(p_h'(-3\ell^s)) \leq 2(s+1) \quad \mbox{ or } \quad v_\ell(p_h(-3\ell^s)) \leq s+1+v_\ell(p_h'(-3\ell^s)) \leq 2(s+1),
%\]
%so that $v_\ell (p_h(-3\ell^s))\geq n-3s-2v-\alpha-\gamma$ and finally $yv_{-} =O(n-3s-2v-\alpha-2(2s+2))=O(n-7s-2v-\alpha-4)$.
Putting all together, we have proved that up to an error of order $\ell^{n-\alpha-4s-4v-6}$ we have
\[
xv_{+}=0, \; yv_+=v_{-}, \; hv_{+}=\ell^s v_+, \; xv_{-}=\ell^{2s} v_{+}, \; yv_{-}=0, \; hv_{-}=-\ell^s v_{-}.
\]

\medskip

Write $\overline{x}$ (resp. $\overline{y}, \overline{h}$) for $\ell^s \left( \begin{matrix} 0 & 1 \\ 0 & 0 \end{matrix} \right)$ (resp. $\ell^s \left( \begin{matrix} 0 & 0 \\ 1 & 0 \end{matrix} \right), \ell^s \left( \begin{matrix} 1 & 0 \\ 0 & -1 \end{matrix} \right)$) and consider the matrix $\tilde{M}$ whose columns are given by $\ell^s v_{+}$ and $v_-$. The above relations may be stated more compactly as
\begin{equation}\label{eq_MCommutesPhi}
\tilde{M} \overline{x} =x\tilde{M}, \; \tilde{M} \overline{y} =y\tilde{M}, \; \tilde{M} \overline{h} =h\tilde{M}
\end{equation}
modulo $\ell^{n-\alpha-4s-4v-6}$. Let $\delta$ be minimal among the valuations of the coefficients of $\tilde{M}$: by construction, at least one of the coordinates of $v_+$ is an $\ell$-adic unit, so $\delta \leq s$. Set $M=\ell^{-\delta}\tilde{M}$. Dividing equations \eqref{eq_MCommutesPhi} by $\ell^\delta$ we see that $M$ satisfies analogous equations up to error terms of order $n-\alpha-5s-4v-6$, and by construction at least one of the coefficients of $M$ is an $\ell$-adic unit. 

Let now $g$ be any element of $L_1$. The matrix $\ell^s g$ belongs to $\ell^s \mathfrak{sl}_2(\mathbb{Z}_\ell)$, so it is a linear combination of $\overline{x},\overline{y},\overline{h}$ with coefficients in $\mathbb{Z}_\ell$. Write $\ell^s g=\lambda_1 \overline{x} + \lambda_2 \overline{y}+\lambda_3 \overline{h}$. We have
\[
\begin{aligned}
\ell^s M g   & =M(\ell^s g) \\
             & =M(\lambda_1 \overline{x} + \lambda_2 \overline{y}+\lambda_3 \overline{h}) \\
             & =(\lambda_1 x + \lambda_2 y + \lambda_3 h)M + O(n-\alpha-5s-4v-6) \\
						 & = \varphi(\ell^s g)M + O(n-\alpha-5s-4v-6) \\
						 & = \ell^s \varphi(g)M + O(n-\alpha-5s-4v-6),
\end{aligned}
\]
so that dividing by $\ell^s$ we deduce $Mg=\varphi(g)M + O(n-\alpha-6s-4v-6)$ for every $g \in L_1$, which is the first statement in the proposition.

\medskip

Let us now turn to the statement concerning the determinant. We can assume that $v_+$ is normalized so that $v_+=\left( \begin{matrix} 1 \\ c \end{matrix} \right)$. We also write $v_-=\left(\begin{matrix} b \\ d \end{matrix} \right)$. It is clear that $v_\ell(\det M) \leq v_\ell(\det \tilde{M})$, and that  $\det \tilde{M} = \ell^s \det \left( \begin{matrix} 1 & b \\ c & d \end{matrix} \right)$, so let us consider $D:=v_\ell\left(\det \left( \begin{matrix} 1 & b \\ c & d \end{matrix} \right)\right)$. Suppose by contradiction $D>3s+v$; by definition of the determinant we have $d=bc+O(D)$, which implies
\[
v_-=\left( \begin{matrix} b \\ d \end{matrix} \right) = \left( \begin{matrix} b \\ bc+O(D) \end{matrix} \right) = bv_++O(D).
\]

Applying $h$ to both sides of this equality and using equation \eqref{eq_ActionhOnVMinus} we get
\[
\mu_- v_- + O(n-\alpha-2s-4v) = hv_-=h(bv_++O(D))=b\mu_+v_+ + O(D).
\]

Comparing the first coordinate of these vectors we deduce
\[
b\mu_-= b\mu_+ +O(\min\left\{D,n-\alpha-2s-4v \right\}),
\]
hence
\begin{equation}\label{eq_IncompatibleEigenvalues}
\mu_-=\mu_+ + O(\min\left\{D-v_\ell(b),n-\alpha-2s-4v-v_\ell(b)\right\}).
\end{equation}

Note now that since $d=bc+O(D)$ we have $v_\ell(d) \geq \min\left\{v_\ell(b),D \right\}$. Moreover, we see by equation \eqref{eq_ActionOnVMinus} that $xv_-=\ell^{2s}v_++O(n-\alpha-2(s+3))$, and since the right hand side does not vanish modulo $\ell^{2s+1}$ (since $n-\alpha-2(s+3) >2s+1$ and $\ell^{2s}v_+=\left( \begin{matrix} \ell^{2s} \\ \ell^{2s}c \end{matrix} \right)$) we must in particular have $v_- \not \equiv 0 \pmod{\ell^{2s+1}}$. Hence $\min\left\{v_\ell(b),v_\ell(d)\right\} \leq 2s$. Let us show that we also have $v_\ell(b) \leq 2s$. Suppose that $v_\ell(b) \geq 2s+1$: then
\[
v_\ell(d) \geq \min\left\{v_\ell(b),D \right\} \geq \min\left\{2s+1,3s+v+1 \right\} \geq 2s+1,
\]
which implies $\min\left\{v_\ell(b),v_\ell(d)\right\} \geq 2s+1$ and contradicts what we just proved. 

Therefore $v_\ell(b) \leq 2s$, hence equation \eqref{eq_IncompatibleEigenvalues} implies $\mu_-=\mu_+ + O \left(D-2s\right)$: notice that if the minimum in \eqref{eq_IncompatibleEigenvalues} were attained for $n-\alpha-2s-4v-v_\ell(b) \geq 3s+2$ we would have $\ell^s = -\ell^s + O(3s+2)$, a clear contradiction. 
On the other hand, we know that $\mu_\pm = \pm \ell^s + O(s+4)$, so the above equation implies $2\ell^s+O(s+4)=O(D-2s)$. Hence we have proved $v_\ell(2\ell^s) \geq D-2s$, i.e. $D \leq 3s+v$, a contradiction. It follows, as claimed, that $v_\ell(\det M) \leq v_\ell(\det \tilde{M})=s+D \leq 4s+v$.

Next we prove the statement concerning traces. Let $g$ be any element of $L_1$. Setting, for the sake of simplicity, $N=n-\alpha-6s-4v-6$, we have $Mg=\varphi(g)M + O(N)$, so (multiplying on the left by the adjugate $M^*$ of $M$) we deduce $\det(M) g = M^* \varphi(g) M +O(N)$. Didiving through by $\det(M)$ we have $g = M^{-1} \varphi(g) M + O(N-(4s+v))$; note that this equality would a priori only hold in $\mathfrak{sl}_2(\mathbb{Q}_\ell)$, but since both $g$ and the error term are $\ell$-integral we necessarily also have $M^{-1} \varphi(g) M \in \mathfrak{sl}_2(\mathbb{Z}_\ell)$. Squaring and taking traces then yields $\operatorname{tr}\left(g^2\right) = \operatorname{tr} \left[ \left( M^{-1} \varphi(g) M \right)^2 \right] + O(N-(4s+v))$, i.e.
\[
\operatorname{tr}\left(g^2\right) = \operatorname{tr} \left(\varphi(g)^2 \right)+ O(N-(4s+v))
\]
as claimed. Finally, essentially the same argument shows the last two statements: we can multiply the congruence $Mg_1 \equiv \varphi(g_1) M \pmod{\ell^{N}}$ on the right (resp.~left) by $M^*$ and divide by $\det M$ to get
\[
Mg_1M^{-1} \equiv \varphi(g_1) \pmod{\ell^{N-4s-v}}, \quad g_1 \equiv M^{-1}\varphi(g_1)M \pmod{\ell^{N-4s-v}}.\]\end{proof}

\section{Products of two curves}\label{sect_ProductOfTwo}
\subsection{Notation and preliminaries}
Let $E_1, E_2$ be two elliptic curves over $K$ and $\ell$ be a prime number, and recall that we denote by $G_\ell$ the image of $\abGal{K}$ inside $\operatorname{Aut} T_\ell(E_1) \times \operatorname{Aut} T_\ell(E_2) \cong \operatorname{GL}_2(\mathbb{Z}_\ell)^2$.
To study the Galois representation attached to $E_1 \times E_2$ we are going to pass to a suitable extension of $K$ over which the study of the Lie algebra of $G_\ell$ is sufficient to yield information on $G_\ell$ itself. Before doing this, however, we need to dispense with some necessary preliminaries. Let $G_{\ell,1}, G_{\ell,2}$ be the two projections of $G_\ell$ onto the two factors $\operatorname{GL}_2(\mathbb{Z}_\ell)$, and $m_1$, $m_2$ be integers such that $\mathcal{B}_\ell(m_i)$ is contained in $G_{\ell,i}$ for $i=1,2$. 
%Suppose for the moment that $\ell$ is odd.  
We want to apply theorem \ref{thm_PinkGL22}, so for the whole section (with the exception of proposition \ref{prop_Finale2Curves}) we make the following

\smallskip

\begin{assumption}\label{assumpt_GaloisIsSmall} If $\ell$ is odd, $G_\ell$ does not contain $\mathcal{B}_\ell\left(20\max\{m_1,m_2\},20\max\{m_1,m_2\} \right)$.
\end{assumption}

\smallskip

Under this assumption, we define $K_\ell$ to be the extension of $K$ associated with the following closed subgroups of $G_\ell$:
\[
\begin{cases}
\operatorname{ker}\left(G_2 \to \operatorname{GL}_2(\mathbb{Z}/8\mathbb{Z})^2 \right), \text{ if } \ell=2 \\
H_\ell, \text{ if } \ell \neq 2,
\end{cases}
\]
where $H_\ell$ is the group given by an application of theorem \ref{thm_PinkGL22} under our assumption. Note that the degree $[K_2:K]$ is at most $3^2 2^{16}$, that is to say the order of
\[
\left\{(x,y) \in \operatorname{GL}_2(\mathbb{Z}/8\mathbb{Z})^2 \bigm \vert \det x= \det y \right\},
\]
whereas $[K_\ell:K]$ is uniformly bounded by $2 \cdot 48^2$ for $\ell \neq 2$.
Note that $H_\ell$ is by construction the image of $\abGal{K_\ell}$ in $\operatorname{Aut} T_\ell(E_1) \times \operatorname{Aut} T_\ell(E_2) \cong \operatorname{GL}_2(\mathbb{Z}_\ell)^2$. 

\begin{definition}\label{def_n1n2}
We write $H_{\ell,1}, H_{\ell,2}$ for the projections of $H_\ell$ on the two factors $\operatorname{GL}_2(\mathbb{Z}_\ell)$. Furthermore, we let $n_1,n_2$ be integers such that $H_{\ell,1}, H_{\ell,2}$ respectively contain $\mathcal{B}_\ell(n_1),\mathcal{B}_\ell(n_2)$. 
Notice that if $\ell=2$ we have $n_1, n_2 \geq 2$; on the other hand, for $\ell=3$ or $5$ we explicitly demand that $n_1, n_2 \geq 1$. %This will allow us to apply lemma \ref{lemma_ConjStableSubspaces} to prove corollary \ref{cor_EasyCase}.
\end{definition}

%\begin{remark}
%For $\ell=3$, the construction of \cite{GeneralPink} is simply to take $H_3=\operatorname{ker} \left(G \to \operatorname{GL}_2(\mathbb{F}_3)^2\right)$, so the condition $n_1, n_2 \geq 1$ is automatically satisfied if we construct $H_3$ 

%%condition $n_1,n_2 \geq 1$ is automatic if we take as group $H_3$ the one constructed in \cite{GeneralPink}, which gives $H_3=\operatorname{ker} \left(G \to \operatorname{GL}_2(\mathbb{F}_3)^2\right)$).
%\end{remark}

\begin{remark}\label{rmk_ValuesofMandN}
Note that if $m_1, m_2>0$ we can in fact take $n_1=m_1,n_2=m_2$ unless $\ell \leq 3$: indeed for primes $\ell \geq 5$ the index of $H_\ell$ in $G_\ell$ is not divisible by $\ell$, so for any positive value of $n$ the (pro-$\ell$) group $\mathcal{B}_\ell(n)$ is contained in $H_{\ell,i}$ if and only if it is contained in $G_{\ell,i}$. Furthermore, it is clear by definition that we can always assume without loss of generality $m_1 \leq n_1, m_2 \leq n_2$, because the groups $H_{\ell,i}$ are subgroups the corresponding groups $G_{\ell,i}$.
\end{remark}

%\medskip

\begin{definition}
We let $L \subseteq \mathfrak{sl}_2(\mathbb{Z}_\ell)^{\oplus 2}$ (resp.~$L_1, L_2 \subseteq \mathfrak{sl}_2(\mathbb{Z}_\ell)$) be the Lie algebra of $H_\ell$ (resp.~of $H_{\ell,1}$, of $H_{\ell,2}$). We choose a basis of $L$ of the form $(a_1,b_1), (a_2,b_2), (a_3,b_3),(0,y_1),(0,y_2),(0,y_3)$. Such a basis clearly exists. 
\end{definition}

By our assumption $H_{\ell,1} \supseteq \mathcal{B}_\ell(n_1)$ we have $L_1 \supseteq \ell^{n_1}\mathfrak{sl}_2(\mathbb{Z}_\ell)$.
Notice that $(0,y_1),(0,y_2),(0,y_3)$ span a Lie-subalgebra: indeed $[(0,y_i),(0,y_j)]=(0,[y_i,y_j])$ must be a linear combination with $\mathbb{Z}_\ell$-coefficients of the basis elements; however, since $a_1,a_2,a_3$ are linearly independent over $\mathbb{Z}_\ell$, we deduce that this commutator is a linear combination of $(0,y_1),(0,y_2),(0,y_3)$, so that these three elements do indeed span a Lie algebra, which we call $\ElThree$. Note that $\ElThree$ can equivalently be described as the kernel of the projection from $L \subseteq \mathfrak{sl}_2(\mathbb{Z}_\ell) \oplus \mathfrak{sl}_2(\mathbb{Z}_\ell)$ to the first copy of $\mathfrak{sl}_2(\mathbb{Z}_\ell)$. We shall interchangeably consider $\ElThree$ as a subalgebra of $\mathfrak{sl}_2(\mathbb{Z}_\ell) \oplus \mathfrak{sl}_2(\mathbb{Z}_\ell)$ or as a subalgebra of $\mathfrak{sl}_2(\mathbb{Z}_\ell)$, identifying $\ElThree$ to its projection on the second copy of $\mathfrak{sl}_2(\mathbb{Z}_\ell)$.

\begin{lemma}
$\ElThree \subseteq \mathfrak{sl}_2(\mathbb{Z}_\ell)$ is stable under conjugation by $\mathcal{B}_\ell(n_2)$.
\end{lemma}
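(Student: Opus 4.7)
The plan is short because the claim is a direct consequence of two general facts about the Pink construction: $L(H_\ell)$ is stable under $H_\ell$-conjugation, and $\ElThree$ is the kernel of a Lie-algebra projection that is $H_\ell$-equivariant.

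First I would check the $\Theta_2$-equivariance. For any $h,h' \in H_\ell \subseteq \operatorname{GL}_2(\mathbb{Z}_\ell)^2$, the trace of each component is a class function, so componentwise
\[
h \Theta_2(h') h^{-1} = h\bigl(h' - \tfrac{1}{2}\operatorname{tr}(h')\bigr)h^{-1} = hh'h^{-1} - \tfrac{1}{2}\operatorname{tr}(hh'h^{-1}) = \Theta_2(hh'h^{-1}).
\]
Since $H_\ell$ is a subgroup, $hh'h^{-1} \in H_\ell$, so $h\,\Theta_2(H_\ell)\,h^{-1} \subseteq \Theta_2(H_\ell)$. Taking $\mathbb{Z}_\ell$-spans, $L = L(H_\ell)$ is stable under conjugation by every $h \in H_\ell$.

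Next I would translate this to $\ElThree$. By construction $\ElThree$ is exactly the kernel of the projection $\pi_1 \colon L \to \mathfrak{sl}_2(\mathbb{Z}_\ell)$ onto the first factor. For $h = (h_1,h_2) \in H_\ell$ and $(0,y) \in \ElThree \subseteq L$, the previous paragraph gives
\[
h(0,y)h^{-1} = (0,\, h_2 y h_2^{-1}) \in L,
\]
and since the first coordinate is still zero, this element lies in $\ElThree$. Hence $\ElThree$ (viewed inside the second copy of $\mathfrak{sl}_2(\mathbb{Z}_\ell)$ via the identification made just before the lemma) is stable under conjugation by every element of the second projection $H_{\ell,2} = \pi_2(H_\ell)$: indeed, any $g \in H_{\ell,2}$ can be lifted to some $(h_1,g) \in H_\ell$, and the computation above shows $g y g^{-1} \in \ElThree$ for every $y \in \ElThree$.

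Finally, by definition $n_2$ was chosen so that $\mathcal{B}_\ell(n_2) \subseteq H_{\ell,2}$, so the conclusion follows immediately. The only point requiring care is keeping track of the identification of $\ElThree$ with a subalgebra of $\mathfrak{sl}_2(\mathbb{Z}_\ell)$ via the second projection; there is no real obstacle, as the first coordinate is automatically preserved (it stays zero) under the conjugation action.
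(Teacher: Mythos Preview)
Your proof is correct and follows essentially the same approach as the paper: both establish that $L(H_\ell)$ is stable under $H_\ell$-conjugation via the equivariance of $\Theta_2$, then lift an arbitrary $g\in\mathcal{B}_\ell(n_2)\subseteq H_{\ell,2}$ to $(h_1,g)\in H_\ell$ and observe that conjugation preserves the vanishing of the first coordinate. Your presentation is in fact slightly cleaner, since you avoid the (unnecessary) limit argument the paper uses when passing from $\Theta_2(H_\ell)$ to its $\mathbb{Z}_\ell$-span.
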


\begin{proof}
For the proof, consider $\ElThree$ as a subalgebra of $\mathfrak{sl}_2(\mathbb{Z}_\ell) \oplus \mathfrak{sl}_2(\mathbb{Z}_\ell)$.
Take any element $l \in \ElThree$: it is the limit of a sequence $l_n=\sum_{i=1}^n \lambda_{n,i} \Theta(g_{n,i})$ for certain $g_{n,i} \in H_\ell$ and $\lambda_{n,i} \in \mathbb{Z}_\ell$. For any $g \in B_\ell(n_2)$ there exists $h \in H_{\ell,1}$ such that $(h,g)$ is in $H_\ell$. We have
\[
\begin{aligned}
(h,g)^{-1}l_n(h,g) & =\sum_{i=1}^n \lambda_{n,i} (h,g)^{-1}\Theta(g_{n,i})(h,g)  = \sum_{i=1}^n \lambda_{n,i} (h,g)^{-1}\left(g_{n,i}-\frac{\operatorname{tr}(g_{n,i})}{2} \operatorname{Id}\right)(h,g) \\
				 & = \sum_{i=1}^n \lambda_{n,i} \left((h,g)^{-1}g_{n,i}(h,g)-\frac{\operatorname{tr}((h,g)^{-1}g_{n,i}(h,g))}{2} \operatorname{Id}\right) \\
				 & =\sum_{i=1}^n \lambda_{n,i} \Theta((h,g)^{-1}g_{n,i}(h,g)) \in \langle \Theta(H_\ell) \rangle,
\end{aligned}
\]
so the sequence $\left((h,g)^{-1}l_n(h,g)\right)_{n \geq 0}$ is in $L$, and by continuity of conjugation tends to the element $(h,g)^{-1}l(h,g)$ of $L$. Now if we write $l=(l^{(1)},l^{(2)})=(0,l^{(2)})$ we have
\[(h,g)^{-1}l(h,g)=(h,g)^{-1} (0,l^{(2)}) (h,g)=(0,g^{-1}l^{(2)}g) \in L,\]
and since $\ElThree$ is exactly the sub-algebra given by the elements whose first coordinate vanishes the claim is proved.
\end{proof}

\begin{corollary}\label{cor_EasyCase}
Fix an integer $t$, and suppose that at least one among $y_1,y_2,y_3$ is nonzero modulo $\ell^{t+1}$: then $\ElThree$ contains $\ell^{t+4n_2+4v}\mathfrak{sl}_2(\mathbb{Z}_\ell)$.
\end{corollary}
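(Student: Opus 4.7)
The plan is straightforward: we want to apply Lemma \ref{lemma_ConjStableSubspaces} directly, with $W = L_0$ regarded as a subalgebra of a single copy of $\mathfrak{sl}_2(\mathbb{Z}_\ell)$ (via projection onto the second factor, as made explicit in the paragraph introducing $L_0$) and with $s = n_2$. The desired conclusion $L_0 \supseteq \ell^{t+4n_2+4v}\mathfrak{sl}_2(\mathbb{Z}_\ell)$ is then literally the output of that lemma.

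What remains is to check the three hypotheses of Lemma \ref{lemma_ConjStableSubspaces}. First, $L_0$ is a Lie subalgebra of $\mathfrak{sl}_2(\mathbb{Z}_\ell)$: this was observed in the paragraph preceding the previous lemma, using the $\mathbb{Z}_\ell$-linear independence of $a_1,a_2,a_3$ to force $[(0,y_i),(0,y_j)]$ to lie in the span of $(0,y_1),(0,y_2),(0,y_3)$. Second, $L_0$ does not vanish modulo $\ell^{t+1}$: since $y_1,y_2,y_3$ generate $L_0$ and by assumption at least one of them is nonzero modulo $\ell^{t+1}$, this is immediate. Third, $L_0$ is stable under conjugation by $\mathcal{B}_\ell(n_2)$: this is exactly the content of the preceding lemma.

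Finally, the technical side conditions on $s$ in Lemma \ref{lemma_ConjStableSubspaces} — namely $s \geq 2$ when $\ell = 2$ and $s \geq 1$ when $\ell \in \{3,5\}$ — are satisfied with $s = n_2$ thanks to the explicit requirements built into Definition \ref{def_n1n2}. There is no substantive obstacle: the corollary is essentially a packaging step that combines the preceding stability lemma with Lemma \ref{lemma_ConjStableSubspaces} in a form ready to feed into the later analysis, which will split according to whether the integer $t$ guaranteed by this statement is small (so the kernel of $L \to L_1$ is already of controlled index) or very large (in which case $L$ resembles the graph of a Lie algebra morphism and Proposition \ref{prop_ApproximateMorphismsAreInner} will come into play).
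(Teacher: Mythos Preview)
Your proof is correct and matches the paper's approach exactly: the paper's own argument is the single sentence ``Apply lemma \ref{lemma_ConjStableSubspaces} with $s=n_2$ (the hypotheses of this lemma are satisfied thanks to our assumptions on $n_1,n_2$, cf.~definition \ref{def_n1n2}).'' You have simply unpacked that sentence by verifying each hypothesis explicitly.
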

\begin{proof}
Apply lemma \ref{lemma_ConjStableSubspaces} with $s=n_2$ (the hypothesis of this lemma are satisfied thanks to our assumptions on $n_1,n_2$, cf.~definition \ref{def_n1n2}).
\end{proof}

Our task is therefore to bound the values of $t$ for which the $y_i$ all vanish modulo $\ell^t$; doing this will allow us to prove proposition \ref{prop_Finale2Curves} below, which is the crucial ingredient in proving theorem \ref{thm_Explicit}. The desired bound on $t$ will be established in §\ref{subsect_ExOpTh}; in the present section we content ourselves with showing two basic lemmas in linear algebra (over $\mathbb{Z}_\ell$) which will be useful later. Notice that -- since we are only looking for an upper bound on $t$ -- there is no loss of generality in assuming that $y_i \equiv 0 \pmod {\ell^{n_2+1}}$ for $i=1,2,3$, for otherwise we are already done. 
Thus we can assume that we are given an integer $t\geq n_2+1$ such that $y_i \equiv 0 \pmod{\ell^t}$ for $i=1,2,3$: we shall endeavour to show an upper bound on the value of $t$. As a first modest step in this direction we have:%Under these assumptions we have:

%If at least one among $y_1,y_2,y_3$ is nonzero modulo $\ell^{n_2+1}$ we are done, so we can assume without loss of generality that $y_i \equiv 0 \pmod {\ell^t}$ for a certain $t \geq n_2+1$.
%If this is the case, then none of $b_1,b_2,b_3$ can be zero modulo $\ell^t$, for otherwise $L_2$ could not contain $\ell^{n_2} \mathfrak{sl}_2(\mathbb{Z}_\ell)$. In fact we have even more:

\begin{lemma}\label{lemma_bigenerate}
Suppose $y_1, y_2, y_3$ all vanish modulo $\ell^t$, where $t \geq n_2+1$. 
The $\mathbb{Z}_\ell$-submodule of $\mathfrak{sl}_2(\mathbb{Z}_\ell)$ generated by $b_1$, $b_2$, $b_3$ contains $\ell^{n_2} \mathfrak{sl}_2(\mathbb{Z}_\ell)$.
\end{lemma}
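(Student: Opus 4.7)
The plan is to reduce the claim to a very simple statement about $\mathbb{Z}_\ell$-submodules of $\mathfrak{sl}_2(\mathbb{Z}_\ell)$, using that the $y_i$ are ``negligibly small'' in the $\ell$-adic topology compared with the scale $\ell^{n_2}$ we are aiming for.

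First I would observe that the projection $\pi_2: L \to \mathfrak{sl}_2(\mathbb{Z}_\ell)$ onto the second factor coincides with $L_2$: indeed for $g=(g_1,g_2) \in H_\ell$ one has $\pi_2(\Theta_2(g)) = g_2 - \tfrac{1}{2}\operatorname{tr}(g_2) = \Theta_1(g_2)$, and as $g$ ranges over $H_\ell$ the element $g_2$ ranges over $H_{\ell,2}$. Hence $L_2$ is the $\mathbb{Z}_\ell$-span of $b_1,b_2,b_3,y_1,y_2,y_3$, and since $H_{\ell,2} \supseteq \mathcal{B}_\ell(n_2)$ we get $L_2 \supseteq \ell^{n_2}\mathfrak{sl}_2(\mathbb{Z}_\ell)$.

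Setting $M:=\langle b_1,b_2,b_3\rangle_{\mathbb{Z}_\ell}$ and using the hypothesis $y_i \in \ell^t\mathfrak{sl}_2(\mathbb{Z}_\ell)$ with $t \geq n_2+1$, the inclusion above becomes
\[
\ell^{n_2}\mathfrak{sl}_2(\mathbb{Z}_\ell) \;\subseteq\; M + \langle y_1,y_2,y_3\rangle \;\subseteq\; M + \ell \cdot \ell^{n_2}\mathfrak{sl}_2(\mathbb{Z}_\ell).
\]
From here the claim follows by a Nakayama-style successive approximation: any $x \in \ell^{n_2}\mathfrak{sl}_2(\mathbb{Z}_\ell)$ admits a decomposition $x = m_0 + \ell x_1$ with $m_0 \in M$ and $x_1 \in \ell^{n_2}\mathfrak{sl}_2(\mathbb{Z}_\ell)$; iterating, we obtain $x = \sum_{k \geq 0} \ell^k m_k$ with $m_k \in M$, a series that converges in the $\ell$-adic topology. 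Since $M$ is a finitely generated $\mathbb{Z}_\ell$-submodule of the free finite-rank $\mathbb{Z}_\ell$-module $\mathfrak{sl}_2(\mathbb{Z}_\ell)$, it is closed, so the limit $x$ lies in $M$. This gives $\ell^{n_2}\mathfrak{sl}_2(\mathbb{Z}_\ell) \subseteq M$ as required.

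There is essentially no obstacle here: the only mild subtleties are identifying $\pi_2(L)$ with $L_2$ (so as to legitimately invoke $L_2 \supseteq \ell^{n_2}\mathfrak{sl}_2(\mathbb{Z}_\ell)$) and invoking closedness of $M$ to justify the successive approximation. Neither requires anything beyond elementary $\ell$-adic topology and the definition of the Pink Lie algebra.
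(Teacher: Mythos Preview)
Your proof is correct and follows essentially the same idea as the paper's: both amount to observing that the $y_i$ contribute only in $\ell\cdot\ell^{n_2}\mathfrak{sl}_2(\mathbb{Z}_\ell)$, so one may invert an operator congruent to the identity modulo $\ell$ (your successive-approximation series is exactly the Neumann series for that inverse). The only cosmetic difference is that the paper isolates this as a general matrix statement over $\mathbb{Z}_\ell$ (its Lemma~\ref{lemma_SufficientCondition}), which simultaneously produces the endomorphism $T$ with $Tb_i=\ell^{n_2}e_i$ used in the subsequent Lemma~\ref{lemma_BasisCongruence}; your Nakayama formulation gives the inclusion directly but does not record that $T$.
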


It is clear that the previous lemma follows immediately from the following more general statement:
\begin{lemma}\label{lemma_SufficientCondition}
Let $b_1,\ldots,b_k$ and $y_1,\ldots,y_k$ be elements of $\mathbb{Z}_\ell^k$, and let $n$ be a non-negative integer. Suppose that $y_1,\ldots,y_k$ are all zero modulo $\ell^{n+1}$, and that the submodule of $\mathbb{Z}_\ell^k$ generated by $b_1,\ldots,b_k,y_1,\ldots,y_k$ contains $\ell^n \mathbb{Z}_\ell^k$. Then the submodule of $\mathbb{Z}_\ell^k$ generated by $b_1,\ldots,b_k$ contains $\ell^n \mathbb{Z}_\ell^k$. Let furthermore $e_1,\ldots,e_k$ be the standard basis of $\mathbb{Z}_\ell^k$: there exists a $T \in \operatorname{End}_{\mathbb{Z}_\ell}(\mathbb{Z}_\ell^k)$ such that $Tb_i=\ell^ne_i$ for $i=1,\ldots,k$.
\end{lemma}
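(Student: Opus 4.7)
The plan is to unwind the statement into two parts: first showing that the $b_i$ alone generate $\ell^n \mathbb{Z}_\ell^k$ (an ``approximation plus completeness'' argument), and then extracting $T$ from linear-algebra considerations.

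For the first part, let $B$ be the $k \times k$ matrix over $\mathbb{Z}_\ell$ whose columns are $b_1,\ldots,b_k$, and similarly let $Y$ be the matrix with columns $y_1,\ldots,y_k$. The hypothesis on the $y_i$ says $Y \mathbb{Z}_\ell^k \subseteq \ell^{n+1} \mathbb{Z}_\ell^k$, so the containment $B \mathbb{Z}_\ell^k + Y \mathbb{Z}_\ell^k \supseteq \ell^n \mathbb{Z}_\ell^k$ immediately yields
\[
B \mathbb{Z}_\ell^k + \ell^{n+1} \mathbb{Z}_\ell^k \supseteq \ell^n \mathbb{Z}_\ell^k.
\]
I would then fix a standard basis vector $e_j$ and iteratively improve. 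Set $w_0 := \ell^n e_j \in \ell^n \mathbb{Z}_\ell^k$. By the displayed containment, write $w_0 = \sum_i \alpha_i^{(0)} b_i + \ell w_1$ with $\alpha_i^{(0)} \in \mathbb{Z}_\ell$ and $w_1 \in \ell^n \mathbb{Z}_\ell^k$. Applying the same step to $w_1$ and repeating, we obtain coefficients $\alpha_i^{(r)} \in \mathbb{Z}_\ell$ and remainders $w_{r+1} \in \ell^n \mathbb{Z}_\ell^k$ such that $w_0 = \sum_i \bigl( \sum_{r=0}^{N-1} \ell^r \alpha_i^{(r)} \bigr) b_i + \ell^N w_N$ for every $N$. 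The series $\sum_r \ell^r \alpha_i^{(r)}$ converges in $\mathbb{Z}_\ell$ and $\ell^N w_N \to 0$, so $\ell$-adic completeness gives $\ell^n e_j \in B \mathbb{Z}_\ell^k$, as required.

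For the second part, the inclusion $B \mathbb{Z}_\ell^k \supseteq \ell^n \mathbb{Z}_\ell^k$ shows that $B \otimes_{\mathbb{Z}_\ell} \mathbb{Q}_\ell$ is surjective, hence bijective on $\mathbb{Q}_\ell^k$; in particular $b_1,\ldots,b_k$ form a $\mathbb{Q}_\ell$-basis of $\mathbb{Q}_\ell^k$, so there is a unique $\mathbb{Q}_\ell$-linear map $T$ satisfying $T b_i = \ell^n e_i$, namely $T = \ell^n B^{-1}$. Integrality of $T$ on $\mathbb{Z}_\ell^k$ is exactly the assertion that every $\ell^n e_i$ lies in $B \mathbb{Z}_\ell^k$ with $\mathbb{Z}_\ell$-coefficients, which is what the first part delivers: writing $\ell^n e_i = \sum_j \lambda_{ji} b_j$ with $\lambda_{ji} \in \mathbb{Z}_\ell$, the matrix $(\lambda_{ji})$ represents $T$ in the standard basis.

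There is no real obstacle here; the only delicate point is the convergence step in the first paragraph, which is routine once phrased in terms of $\ell$-adic completeness. The whole lemma is essentially a restatement that ``an error of order $\ell^{n+1}$ cannot help generate $\ell^n \mathbb{Z}_\ell^k$.''
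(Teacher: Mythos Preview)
Your proof is correct. The paper's argument is the same idea in slightly more compact packaging: from the hypothesis one writes $\ell^n I = B\tilde{B} + Y\tilde{Y}$ for suitable $\tilde{B},\tilde{Y}\in M_k(\mathbb{Z}_\ell)$, rearranges to $\ell^n\bigl(\operatorname{Id} - \ell\,\tfrac{Y}{\ell^{n+1}}\tilde{Y}\bigr)=B\tilde{B}$, and inverts the bracketed perturbation of the identity in one step, which immediately gives $\ell^n I = B\cdot(\text{integral matrix})$; the endomorphism $T$ is then read off directly as $T=\tilde{B}\bigl(\operatorname{Id}-\ell\,\tfrac{Y}{\ell^{n+1}}\tilde{Y}\bigr)^{-1}$ via the coincidence of left and right inverses. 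Your successive-approximation argument is exactly the geometric-series expansion of that inverse, and your recovery of $T$ as the coefficient matrix $\Lambda$ satisfying $B\Lambda=\ell^n I$ (hence $\Lambda=\ell^n B^{-1}\in M_k(\mathbb{Z}_\ell)$) is the same observation. So the two differ only cosmetically: the paper's matrix formulation is shorter and yields an explicit formula for $T$, while your version makes the role of $\ell$-adic completeness explicit.
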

\begin{proof}
Let $B, Y$ be the $k \times k$ matrices that have the $b_i$ (resp.~the $y_j$) as columns. The hypothesis implies that there exist two matrices $\tilde{B}, \tilde{Y}$ such that $\ell^n \operatorname{Id} = B \tilde{B} + Y \tilde{Y}$. Notice that by assumption $Y$ is zero modulo $\ell^{n+1}$, so we can rewrite this equation as
$
\displaystyle \ell^n \left(\operatorname{Id} - \ell \frac{Y}{\ell^{n+1}} \tilde{Y} \right) = B \tilde{B},
$
where $ \displaystyle \frac{Y}{\ell^{n+1}} $ has $\ell$-integral coefficients. Now observe that $\left(\operatorname{Id} - \ell \frac{Y}{\ell^{n+1}} \tilde{Y} \right)$, being congruent to the identity modulo $\ell$, is invertible in $\operatorname{End}_{\mathbb{Z}_\ell}\left(\mathbb{Z}_\ell^{k} \right)$; it follows that
$
\ell^n \operatorname{Id} = B \tilde{B} \left(\operatorname{Id} - \ell \frac{Y}{\ell^{n+1}} \tilde{Y} \right)^{-1},
$
which gives a representation of the vectors $\ell^n e_i$ as $\mathbb{Z}_\ell$-linear combinations of $b_1,\ldots,b_k$. Finally, since left- and right- inverses of matrices agree, we also have $\ell^n \operatorname{Id}= \tilde{B} \left(\operatorname{Id} - \ell \frac{Y}{\ell^{n+1}} \tilde{Y} \right)^{-1} B$, so for the second statement we can take $T:=\tilde{B} \left(\operatorname{Id} - \ell \frac{Y}{\ell^{n+1}} \tilde{Y} \right)^{-1}$.
\end{proof}

We shall also need the following consequence of lemmas \ref{lemma_bigenerate} and \ref{lemma_SufficientCondition}.

\begin{lemma}\label{lemma_BasisCongruence}
Let $\lambda_1,\lambda_2,\lambda_3 \in \mathbb{Z}_\ell$ and let $n$ be a non-negative integer. Suppose that
\begin{equation}\label{eq_congruence}
\lambda_1 b_1 + \lambda_2 b_2 + \lambda_3 b_3 \equiv 0 \pmod {\ell^{n_2+n}}:
\end{equation}
then $\lambda_1,\lambda_2,\lambda_3$ are all zero modulo $\ell^n$.
\end{lemma}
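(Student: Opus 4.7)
The plan is to use the auxiliary operator $T$ constructed in Lemma \ref{lemma_SufficientCondition} to turn the relation among the $b_i$'s into a relation among an actual basis of $\mathfrak{sl}_2(\mathbb{Z}_\ell)$, from which the divisibility on the $\lambda_i$'s is immediate.

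More precisely, I would first note that, under our standing assumption $y_i \equiv 0 \pmod{\ell^{n_2+1}}$ (which is implicit in the present context: Lemma \ref{lemma_bigenerate} has already been invoked), the triple $b_1,b_2,b_3$ lies in $\mathfrak{sl}_2(\mathbb{Z}_\ell) \cong \mathbb{Z}_\ell^3$ and satisfies the hypothesis of Lemma \ref{lemma_SufficientCondition} with $k=3$ and $n=n_2$ (taking the standard basis $e_1,e_2,e_3$ of $\mathfrak{sl}_2(\mathbb{Z}_\ell)$). Consequently there exists a $\mathbb{Z}_\ell$-linear endomorphism $T \in \operatorname{End}_{\mathbb{Z}_\ell}(\mathfrak{sl}_2(\mathbb{Z}_\ell))$ such that $T b_i = \ell^{n_2} e_i$ for $i=1,2,3$.

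Now apply $T$ to both sides of the congruence \eqref{eq_congruence}. Since $T$ has $\mathbb{Z}_\ell$-integral coefficients, congruences are preserved, and we obtain
\[
\ell^{n_2}\bigl(\lambda_1 e_1 + \lambda_2 e_2 + \lambda_3 e_3\bigr) \equiv 0 \pmod{\ell^{n_2+n}}.
\]
Because $e_1, e_2, e_3$ form a $\mathbb{Z}_\ell$-basis of $\mathfrak{sl}_2(\mathbb{Z}_\ell)$, equating coordinates yields $\ell^{n_2}\lambda_i \equiv 0 \pmod{\ell^{n_2+n}}$, i.e.\ $\lambda_i \equiv 0 \pmod{\ell^n}$ for $i=1,2,3$, as required.

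There is essentially no obstacle here: the whole content has already been packaged into Lemma \ref{lemma_SufficientCondition}, which provides the ``approximate left inverse'' $T$ of the matrix with columns $b_1,b_2,b_3$. The only mild subtlety is being clear that the congruence assumption $y_i \equiv 0 \pmod{\ell^{n_2+1}}$ needed to invoke Lemma \ref{lemma_bigenerate} (and hence the second part of Lemma \ref{lemma_SufficientCondition}) is in force throughout this subsection; once that is noted, the argument is a two-line application of $T$.
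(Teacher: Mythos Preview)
Your proof is correct and follows essentially the same approach as the paper's own argument: invoke the operator $T$ from Lemma~\ref{lemma_SufficientCondition} (via Lemma~\ref{lemma_bigenerate}) satisfying $Tb_i=\ell^{n_2}e_i$, apply it to the congruence, and read off the divisibility of the $\lambda_i$ from the resulting relation in the standard basis. Your explicit remark about the standing assumption $y_i\equiv 0\pmod{\ell^{n_2+1}}$ is a useful clarification that the paper leaves implicit.
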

\begin{proof}Let $e_1=\left(\begin{matrix}0 & 1 \\ 0 & 0 \end{matrix} \right)$, $e_2=\left(\begin{matrix}1 & 0 \\ 0 & -1 \end{matrix} \right)$, $e_3=\left(\begin{matrix}0 & 0 \\ 1 & 0 \end{matrix} \right)$.
By the previous lemma, there is a $T \in \operatorname{End}_{\mathbb{Z}_\ell}(\mathfrak{sl}_2(\mathbb{Z}_\ell))$ such that $Tb_i=\ell^{n_2}e_i$ for $i=1,2,3$. Applying $T$ to both sides of \eqref{eq_congruence} we find $\ell^{n_2}\left(\lambda_1e_1+\lambda_2 e_2+\lambda_3e_3\right) \equiv 0 \pmod{\ell^{n_2+n}}$: this clearly implies that $\lambda_1,\lambda_2,\lambda_3$ are all zero modulo $\ell^{n}$.
%By induction on $n$. For $n=1$, suppose by contradiction that one among $\lambda_1,\lambda_2,\lambda_3$, say $\lambda_1$, is nonzero modulo $\ell$. Then it is in particular an $\ell$-adic unit, so we can write
%\[
%b_1 \equiv -\lambda_1^{-1}\left(\lambda_2 b_2 + \lambda_3 b_3 \right) \pmod{\ell^{n_2+1}}.
%\]
%Let $c_1=b_1 +\lambda_1^{-1}\left(\lambda_2 b_2 + \lambda_3 b_3 \right) \in L_2 \subseteq \mathfrak{sl}_2(\mathbb{Z}_\ell)$
\end{proof}

\subsection{An explicit open image theorem for $G_\ell$}\label{subsect_ExOpTh}
Let us now return to our elliptic curves $E_1, E_2$. We continue with the notation from the previous section. %Since $a_1$, $a_2$, $a_3$ generate an open submodule of $\mathfrak{sl}_2(\mathbb{Z}_\ell)$, they form a $\mathbb{Q}_\ell$-basis of $\mathfrak{sl}_2(\mathbb{Q}_\ell)$, hence they are in particular $\mathbb{Q}_\ell$-linearly independent. They are a fortiori $\mathbb{Z}_\ell$-linearly independent,
Notice that $a_1$, $a_2$, $a_3$ are $\mathbb{Z}_\ell$-linearly independent, hence there exists a unique $\mathbb{Z}_\ell$-linear map $\varphi: L_1 \to L_2$ such that $\varphi(a_i)=b_i$ for $i=1,2,3$. 

For two indices $j,k$ write $[a_j,a_k]=\sum_{i=1}^3 \mu_i^{(j,k)} a_i$ for certain structure constants $\mu_i^{(j,k)} \in \mathbb{Z}_\ell$. Recall furthermore that $t$ is a positive integer no less than $n_2+1$ and such that $y_1$, $y_2$, $y_3$ are all zero modulo $\ell^t$.
Since $L$ is a Lie algebra, there exist scalars $\nu_i^{(j,k)}$ such that
\[
\left[ (a_j,b_j),(a_k,b_k) \right]=\sum_{i=1}^3 \mu_i^{(j,k)} (a_i,b_i) + \sum_{i=1}^3 \nu^{(j,k)}_i (0,y_i),
\]
and reducing the second coordinate of this equation modulo $\ell^t$ gives
\[
\begin{aligned}
\phantom{.}[\varphi(a_j),\varphi(a_k)] & =[b_j,b_k]  \equiv \sum_{i=1}^3 \mu_i^{(j,k)} b_i
                             \equiv  \sum_{i=1}^3 \mu_i^{(j,k)} \varphi(a_i) \\
                            & \equiv \varphi \left( \sum_{i=1}^3 \mu_i^{(j,k)} a_i \right)
                             \equiv \varphi\left( [a_j,a_k] \right) \pmod{\ell^t}.
\end{aligned}
\]

We want to apply proposition \ref{prop_ApproximateMorphismsAreInner} to $\varphi$. We claim that, in the notation of that proposition, we can take $\alpha \leq n_2+n_1$. By lemma \ref{lemma_BasisCongruence}, a linear combination $\lambda_1 b_1 + \lambda_2 b_2 + \lambda_3 b_3$ can vanish modulo $\ell^{n_1+n_2+1}$ only if $\lambda_1, \lambda_2, \lambda_3$ all vanish modulo $\ell^{n_1+1}$. 

Now since the $\mathbb{Z}_\ell$-module generated by $a_1,a_2,a_3$ contains $\ell^{n_1}\mathfrak{sl}_2(\mathbb{Z}_\ell)$ we can choose scalars $\lambda_1,\lambda_2,\lambda_3 \in \mathbb{Z}_\ell$ such that $\ell^{n_1} \left( \begin{matrix} 0 & 1 \\ 0 & 0 \end{matrix} \right) =\lambda_1 a_1+\lambda_2 a_2+\lambda_3 a_3$, and clearly at least one among $\lambda_1$, $\lambda_2$, $\lambda_3$ is nonzero modulo $\ell^{n_1+1}$. It follows by lemma \ref{lemma_BasisCongruence} that 
\[
\varphi\left( \ell^{n_1} \left( \begin{matrix} 0 & 1 \\ 0 & 0 \end{matrix} \right)\right) = \varphi\left(\lambda_1 a_1+\lambda_2 a_2+\lambda_3 a_3 \right) = \sum_{i=1}^3 \lambda_i b_i
\]
is nonzero modulo $\ell^{n_1+n_2+1}$ as claimed, and a perfectly analogous argument applies to the image of $\ell^{n_1} \left( \begin{matrix} 0 & 0 \\ 1 & 0 \end{matrix} \right)$. 
%Notice that up to a change of basis we can assume $a_3$ to be of the form $\left( \begin{matrix} 0 & \ell^u \\ 0 & 0 \end{matrix} \right)$ with $u \leq n_1$ (this follows from the description of triangular bases for free modules, cf. \cite[Lemma 3.11]{AdelicEC}). In this case, $\varphi( \ell^{n_1} \left( \begin{matrix} 0 & 1 \\ 0 & 0 \end{matrix} \right))=\ell^{n_1-u} \varphi(a_3)=\ell^{n_1-u} b_3$, and since $b_3$ does not vanish modulo $\ell^{n_2}$ we see that $\varphi \left(\left( \begin{matrix} 0 & \ell^{n_1} \\ 0 & 0 \end{matrix} \right)\right)$ does not vanish modulo $\ell^{n_1+n_2}$. As the property of not vanishing modulo $\ell^{n_1+n_2}$ is invariant under change of basis, this does not depend on our initial choice of basis. The same argument applies to the image of $\ell^{n_1} \left( \begin{matrix} 0 & 0 \\ 1 & 0 \end{matrix} \right)$. 
We also claim that by construction of $\varphi$ and by our assumption on $t$ we have
\[
(l_1,l_2) \in L(H_\ell) \Rightarrow l_2  \equiv \varphi(l_1) \pmod {\ell^t}.
\]
To see this, recall first that every element $(l_1,l_2) \in L(H_\ell)$ is a linear combination of the $(a_i,b_i)$ (for $i=1,2,3$) and of the $(0,y_j)$ (for $j=1,2,3$). Writing $(l_1,l_2)=\sum_{i=1}^3 \sigma_i (a_i,b_i) + \sum_{j=1}^3 \tau_j (0,y_j)$ for some scalars $\sigma_i, \tau_j \in \mathbb{Z}_\ell$, and using the fact that $y_j \equiv 0\pmod{\ell^t}$ for $j=1,2,3$, we find $l_1=\sum_{i=1}^3\sigma_i a_i$ and 
\[
l_2 = \sum_{i=1}^3 \sigma_i b_i + \sum_{j=1}^3 \tau_j y_j \equiv \sum_{i=1}^3 \sigma_i \varphi(a_i) \equiv \varphi\left( \sum_{i=1}^3 \sigma_i a_i \right)\equiv \varphi(l_1) \pmod{\ell^t}.
\]
%$\ell^{n_1} \left( \begin{matrix} 0 & 1 \\ 0 & 0\end{matrix}\right)$ is a combination of $a_1,a_2,a_3$ with coefficients that are at most XXX.
Set $T=t-11n_1-n_2-5v-6$. By proposition \ref{prop_ApproximateMorphismsAreInner}, there is a matrix $M \in M_2(\mathbb{Z}_\ell)$ such that:
\begin{enumerate}
%\item $M l_1 w \equiv \varphi(l_1) Mw \equiv l_2 Mw \pmod{\ell^N} \quad \forall (l_1,l_2) \in L(H_\ell), \forall w \in \mathbb{Z}_\ell^2$;
\item for all $(l_1,l_2) \in L(H_\ell)$ we have $l_2 \equiv M \cdot l_1 \cdot M^{-1} \pmod{\ell^T}$ and $M^{-1} \cdot l_2 \cdot M \equiv l_1 \pmod{\ell^T}$;
\item at least one of the coefficients of $M$ is an $\ell$-adic unit;
\end{enumerate}
furthermore, the map $\varphi$ satisfies
\begin{enumerate}
\item[3.] $\operatorname{tr} ({l_1}^2) \equiv \operatorname{tr}(\varphi(l_1)^2)  \equiv \operatorname{tr} ({l_2}^2) \pmod{\ell^T} \quad \forall (l_1,l_2) \in L(H_\ell)$.
\end{enumerate}
Take any element $(g_1,g_2) \in H_\ell$. By our choice of $K_\ell$, we know that the determinant of $g_1$ (which is equal to the determinant of $g_2$) is a square in $\mathbb{Z}_\ell^\times$, so we can choose a square root of $\det{g_1}$ in $\mathbb{Z}_\ell$ and write
\begin{equation}\label{eq_DivideByDet}
(g_1,g_2) = \sqrt{\det{g_1}} (g_1',g_2')
\end{equation}
for a certain $(g_1',g_2') \in \operatorname{SL}_2(\mathbb{Z}_\ell)$. Set $(l_1,l_2)=\Theta_2(g_1',g_2')$, and notice that $(l_1,l_2)$ differs from $\Theta_2(g_1,g_2)$ by an invertible scalar factor, so it lies again in $L(H_\ell)$. By definition of $\Theta_2$, there exists a pair $(\lambda_1,\lambda_2) \in \mathbb{Z}_\ell^2$ such that
\begin{equation}\label{eq_lambdas}
(g_1',g_2')=(\lambda_1,\lambda_2) \cdot \operatorname{Id} + \left(l_1,l_2 \right),
\end{equation}
and we wish to show that $\lambda_1$ is congruent to $\lambda_2$ modulo a large power of $\ell$:
\begin{proposition}
We have $\lambda_1 \equiv \lambda_2 \pmod {\ell^{T-2v}}$ and $g_2 \equiv Mg_1M^{-1} \pmod {\ell^{T-2v}}$.
\end{proposition}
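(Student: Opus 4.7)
Writing $g_i = \sqrt{\det g_1}(\lambda_i \operatorname{Id} + l_i)$ and invoking property (1) of $M$---applicable since $(l_1, l_2) \in L(H_\ell)$---a direct computation gives
\[
g_2 - M g_1 M^{-1} = \sqrt{\det g_1}\,\bigl[(\lambda_2 - \lambda_1)\operatorname{Id} + (l_2 - Ml_1M^{-1})\bigr] \equiv \sqrt{\det g_1}(\lambda_2-\lambda_1)\operatorname{Id} \pmod{\ell^T}.
\]
Since $\sqrt{\det g_1} \in \mathbb{Z}_\ell^\times$, the assertion $g_2 \equiv Mg_1M^{-1} \pmod{\ell^{T-2v}}$ is equivalent to $\lambda_1 \equiv \lambda_2 \pmod{\ell^{T-2v}}$, so the whole statement reduces to the latter.

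\textbf{Step 2: factored congruence on $\lambda$.} Since $g_i' \in \operatorname{SL}_2(\mathbb{Z}_\ell)$ and $\operatorname{tr}(l_i) = 0$, one has $1 = \det g_i' = \lambda_i^2 + \det(l_i) = \lambda_i^2 - \tfrac{1}{2}\operatorname{tr}(l_i^2)$, hence $2\lambda_i^2 = 2 + \operatorname{tr}(l_i^2)$. Property (3) of $M$ yields $\operatorname{tr}(l_1^2) \equiv \operatorname{tr}(l_2^2) \pmod{\ell^T}$, and dividing by $2$ gives
\[
(\lambda_1 - \lambda_2)(\lambda_1 + \lambda_2) \equiv 0 \pmod{\ell^{T-v}}.
\]

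\textbf{Step 3: bounding $v_\ell(\lambda_1 + \lambda_2)$ and concluding.} The aim is $v_\ell(\lambda_1 + \lambda_2) \leq v$. For $\ell = 2$ the construction of $K_2$ forces $g_i \equiv \operatorname{Id} \pmod 8$; then $\det g_i \equiv 1 \pmod 8$ is a square, and taking $\sqrt{\det g_1} \equiv 1 \pmod 4$ yields $g_i' \equiv \operatorname{Id} \pmod 4$, whence $\lambda_i \equiv 1 \pmod 2$ and $v_2(\lambda_1 + \lambda_2) = 1 = v$. For odd $\ell$ the target is that $\lambda_1 + \lambda_2$ is a unit: the plan is to combine the second property of $T = H_\ell$ from theorem~\ref{thm_PinkGL22} (scalar-mod-$\ell$ coordinates must agree) with the approximate conjugacy $[l_2] \equiv [M][l_1][M]^{-1}$ coming from property (1) of $M$, so as to rule out the pathological case $\lambda_1 \equiv -\lambda_2 \pmod \ell$. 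Once $v_\ell(\lambda_1+\lambda_2) \leq v$ is established, the factored congruence of Step~2 gives $v_\ell(\lambda_1 - \lambda_2) \geq T - 2v$, and Step~1 then yields the second statement. The odd-$\ell$ half of Step~3 is the main obstacle, as it is the step most directly dependent on the finer structure of $H_\ell$ built into the choice of $K_\ell$.
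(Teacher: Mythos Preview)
Your Steps 1 and 2 are correct and efficient, and your treatment of $\ell=2$ in Step 3 is fine. The problem is the odd-$\ell$ half of Step 3: the target you set yourself, that $\lambda_1+\lambda_2$ is an $\ell$-adic unit, is simply \emph{false} in general, so no amount of work with the properties of $H_\ell$ will establish it. Concretely, take any $(g_1,g_2)\in H_\ell$ with $\operatorname{tr}(g_1)\equiv 0\pmod\ell$ (nothing in the construction of $H_\ell$ excludes such elements; for instance, elements whose reduction mod $\ell$ has order $4$ in $\operatorname{PGL}_2(\mathbb{F}_\ell)$). Then $\lambda_1=\tfrac{1}{2}\operatorname{tr}(g_1')\equiv 0\pmod\ell$, and from $\lambda_1^2\equiv\lambda_2^2$ also $\lambda_2\equiv 0\pmod\ell$, so $\lambda_1+\lambda_2\equiv 0\pmod\ell$. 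In this regime $[g_1']$ and $[g_2']$ are \emph{not} scalar, so the second property of $H_\ell$ from Theorem~\ref{thm_PinkGL22} gives no information, and the ``approximate conjugacy'' $[l_2]\equiv[M][l_1][M]^{-1}$ you invoke is not even well-posed, since $\det M$ is only known to be nonzero modulo $\ell^{4n_1+v}$ and $[M]$ may be singular.

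The paper avoids this by an additional idea you are missing: one squares equation~\eqref{eq_lambdas} and applies $\Theta_2$ to obtain a \emph{second} element of $L(H_\ell)$, namely $\bigl(\Theta_1(l_1^2)+2\lambda_1 l_1,\;\Theta_1(l_2^2)+2\lambda_2 l_2\bigr)$. Feeding this into property~(1) of $M$ and cancelling against $l_2^2\equiv M l_1^2 M^{-1}$ and $\operatorname{tr}(l_1^2)\equiv\operatorname{tr}(l_2^2)$ yields $2(\lambda_1-\lambda_2)\,l_2\equiv 0\pmod{\ell^T}$. This gives $\lambda_1\equiv\lambda_2\pmod{\ell^T}$ directly whenever $l_2$ (or, by symmetry, $l_1$) has a unit entry---precisely the case where your factorisation argument fails. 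Only in the residual case where both $l_1,l_2$ vanish modulo $\ell$ does one fall back on a sign argument, and there your reasoning \emph{does} work: then $\lambda_i^2\equiv 1\pmod\ell$, the $H_\ell$-property forces $[\lambda_1]=[\lambda_2]$, and $\lambda_1+\lambda_2\equiv\pm 2$ is a unit. So your factorisation idea is not useless, but it only covers the ``both scalar'' case; the squaring trick is what handles the generic case.
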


\begin{proof}
Notice first that the second statement follows immediately from the first, combined with equations \eqref{eq_DivideByDet} and \eqref{eq_lambdas} and the fact that for all $(l_1,l_2) \in L(H_\ell)$ we have $l_2 \equiv Ml_1M^{-1} \pmod{\ell^T}$ by the properties of $M$. Hence we just need to prove the first congruence.

 We begin by discussing the case of odd $\ell$. Squaring equation \eqref{eq_lambdas} we obtain
\[
\left(\left(g_1'\right)^2,\left(g_2'\right)^2 \right)=({\lambda_1}^2 \cdot \operatorname{Id} + {l_1}^2 + 2 \lambda_1 l_1, {\lambda_2}^2 \cdot \operatorname{Id} + {l_2}^2 + 2 \lambda_2 l_2 ).
\]

Now the left hand side is simply $\displaystyle \frac{1}{\det{g_1}} \left({g_1}^2,{g_2}^2\right)$, an element of $H_\ell$ up to scalar factors. The image of this matrix through $\Theta_2$ is then an element of $L(H_\ell)$, so applying $\Theta_2$ to the right hand side of the previous equation we get
\begin{equation}\label{eq_SquaresInHl}
\left(\Theta_1({l_1}^2)+2\lambda_1 l_1 ,  \Theta_1({l_2}^2) + 2\lambda_2 l_2 \right) \in L(H_\ell),
\end{equation}
which by the properties of $M$ implies
%Observe now that $l_2 \equiv Ml_1M^{-1} \pmod {\ell^T}$, so that $l_2^2 \equiv Ml_1^2M^{-1} \pmod {\ell^T}$. As we also know that $\operatorname{tr}(l_2^2) \equiv \operatorname{tr}(l_1^2) \pmod{\ell^T}$, we deduce
%\[
%\Theta_1(l_2^2) \equiv M\Theta_1(l_1^2)M^{-1} \pmod{\ell^T},
%\]
%and since (by equation \ref{eq_SquaresInHl} and the properties of $M$)
%\begin{equation}\label{eq_Lambdas1}
$
\Theta_1({l_2}^2) + 2\lambda_2 l_2 \equiv M \left(\Theta_1({l_1}^2)+2\lambda_1 l_1  \right) M^{-1} \pmod{\ell^T},
$
%\end{equation}
or equivalently
\begin{equation}\label{eq_Lambdas1}
\displaystyle {l_2}^2-\frac{\operatorname{tr}\left({l_2}^2\right)}{2} \operatorname{Id} + 2\lambda_2 l_2 \equiv M \left(\displaystyle {l_1}^2-\frac{\operatorname{tr}\left({l_1}^2\right)}{2} \operatorname{Id}+2\lambda_1 {l_1}  \right) M^{-1} \pmod{\ell^T}.
\end{equation}
Now since we also have $\frac{1}{2}\operatorname{tr} \left({l_1}^2\right) \equiv \frac{1}{2} \operatorname{tr} \left( {l_2}^2\right) \pmod{\ell^T}$ (recall that $\ell$ is odd, so we can safely divide by 2) and ${l_2}^2 \equiv M {l_1}^2 M^{-1} \pmod{\ell^T}$ we see that \eqref{eq_Lambdas1} implies
\[
2\lambda_1 l_2 \equiv M \left( 2 \lambda_1 l_1 \right) M^{-1} \equiv 2 \lambda_2 l_2 \pmod{\ell^T}.
\]

If $l_2$ has at least one coordinate not divisible by $\ell$, this last equation implies $\lambda_1 \equiv \lambda_2 \pmod{\ell^{T}}$. If not, then $g_2'$ reduces modulo $\ell$ to a multiple of the identity (cf.~equation \eqref{eq_lambdas}). Moreover, as $\det(g_2')=1$, we have in particular
\[
1=\det(\lambda_2 \operatorname{Id} + l_2) = {\lambda_2}^2 - \frac{\operatorname{tr}\left({l_2}^2\right)}{2},
\]
from which we find
\begin{equation}\label{eq_lambda2}
\lambda_2 = \pm \sqrt{\displaystyle 1+\frac{\operatorname{tr}({l_2}^2)}{2}},
\end{equation}
where the square root can be computed via the usual series expansion $\displaystyle \sqrt{1+t}=\sum_{j \geq 0} {1/2 \choose j} t^j$, which converges because $l_2$ is trivial modulo $\ell$ (hence the same is true for $\operatorname{tr}\left({l_2}^2\right)$).  Symmetrically we prove that either the congruence $\lambda_1 \equiv \lambda_2 \pmod{\ell^{T}}$ holds or else $l_1$ is trivial modulo $\ell$ and 
\begin{equation}\label{eq_lambda1}
\lambda_1 = \pm \sqrt{\displaystyle 1+\frac{\operatorname{tr}({l_1}^2)}{2}}.
\end{equation}

Suppose then $l_1,l_2$ to both be trivial modulo $\ell$: then $\operatorname{tr}({l_1}^2)$ and $\operatorname{tr}({l_2}^2)$ are divisible by $\ell$, hence the square roots appearing in equations \eqref{eq_lambda2} and \eqref{eq_lambda1} can both be computed by the series expansion recalled above. Hence using the congruence $\displaystyle \frac{\operatorname{tr}({l_1}^2)}{2} \equiv \frac{\operatorname{tr}({l_2}^2)}{2} \pmod{\ell^T}$ we easily obtain $\sqrt{\displaystyle 1+\frac{\operatorname{tr}({l_1}^2)}{2}} \equiv \sqrt{\displaystyle 1+\frac{\operatorname{tr}({l_2}^2)}{2}} \pmod{\ell^T}$, hence to prove our claim $\lambda_1 \equiv \lambda_2 \pmod{\ell^T}$ it suffices to show that the signs in equations \eqref{eq_lambda2} and \eqref{eq_lambda1} are the same. 

Since $\sqrt{\displaystyle 1+\frac{\operatorname{tr}({l_1}^2)}{2}} \equiv \sqrt{\displaystyle 1+\frac{\operatorname{tr}({l_2}^2)}{2}} \equiv 1 \pmod \ell$, in order to prove this it is enough to show that $\lambda_1 \equiv \lambda_2 \pmod \ell$.
Now observe that $g_1',g_2'$ reduce to diagonal matrices $\operatorname{diag}\left( \lambda_i,\lambda_i\right)$ in $\operatorname{SL}_2(\mathbb{F}_\ell)$, so we have $\lambda_1 \equiv \lambda_2 \pmod{\ell}$ if and only if $g_1', g_2'$ (or equivalently $g_1, g_2$) have the same reduction modulo $\ell$, and this is exactly one of the properties of $H_\ell$ given by theorem \ref{thm_PinkGL22}. This establishes the claim when $\ell$ is odd.

%which implies $\lambda_i \equiv \pm 1 \pmod{\ell}$, so $g_1',g_2'$ reduce to either plus or minus the identity, and $\lambda_1 \equiv \lambda_2 \pmod{\ell^T}$ if $g_1, g_2$ have the same reduction
%. On the other hand, 

%, a fact which in turn is implied by our assumption that $g_2'$ reduces to $\operatorname{Id}$ modulo $\ell$ if and only if $g_1'$ does (cf. theorem \ref{thm_PinkGL22}): indeed we have seen that if $l_1,l_2$ vanish modulo $\ell$, then $g_1',g_2'$ reduce to diagonal matrices $\operatorname{diag}\left( \lambda_i,\lambda_i\right)$ in $\operatorname{SL}_2(\mathbb{F}_\ell)$, 

Consider now the case $\ell=2$. Then $l_1, l_2$ vanish modulo $4$ by definition of $H_2$, and the same argument as above shows that
\begin{equation}\label{eq_Lambdas2}
\lambda_{i} = \pm \sqrt{\displaystyle 1+\frac{\operatorname{tr}({l_{i}}^2)}{2}}, \; i=1,2.
\end{equation}
Given that $2\lambda_{i} \equiv \operatorname{tr} \left( g_{i}' \right) \equiv 2 \pmod 8$ by our construction of $H_2$, we have $\lambda_{1} \equiv \lambda_2 \equiv 1 \pmod 4$, so the sign in equation \eqref{eq_Lambdas2} must be a plus both for $i=1$ and $i=2$. From the congruence $\displaystyle \frac{\operatorname{tr}\left( {l_1}^2 \right)}{2} \equiv \frac{\operatorname{tr}\left( {l_2}^2 \right)}{2} \pmod{2^{T-1}}$ we then deduce
\[
\lambda_1 \equiv \sqrt{1 + \frac{\operatorname{tr}({l_1}^2)}{2}} \equiv \sqrt{1 + \frac{\operatorname{tr}({l_2}^2)}{2}} \equiv \lambda_2 \pmod{2^{T-2}}
\]
as claimed.
% and $\lambda_1 \equiv \lambda_2 \pmod {2^{T-1}}$.
%Using this information in equation \eqref{eq_lambdas} we have thus proved
\end{proof}
%\begin{lemma}
%There exists a matrix $M \in M_2(\mathbb{Z}_\ell)$ such that, for every element $(g_1,g_2) \in H_\ell$, the congruence $g_2 \equiv Mg_1M^{-1} \pmod{\ell^{T-v}}$ holds.
%\end{lemma}
Let us take a moment to summarize what we have proved so far. We have set $\ElThree$ to be the kernel of the natural projection from $L(H_\ell)$, the Lie algebra of $H_\ell$, to $L_1$, the Lie algebra of $L(H_{\ell,1})$. We know that $\ElThree$ is generated by three elements $(0,y_1), (0,y_2), (0,y_3)$, and we have just proved that if $y_1, y_2, y_3$ are all zero modulo $\ell^t$ for a certain positive integer $t \geq n_2+1$, then there exists a matrix $M \in M_2(\mathbb{Z}_\ell)$ with the following properties:
\begin{itemize}
\item at least one of the coefficients of $M$ is an $\ell$-adic unit;
\item for every $(g_1,g_2) \in H_\ell$ we have $g_2 \equiv Mg_1M^{-1} \pmod {\ell^{T-2v}}$.
\end{itemize}

We shall now use these facts to give an upper bound on the values of $t$ for which we can have $y_1 \equiv y_2 \equiv y_3 \equiv 0 \pmod{\ell^t}$: 

\begin{proposition}
Set
$
t_{\max}:=\left\lfloor \frac{v_\ell(b_0(E_1 \times E_2/K_\ell))}{2} \right\rfloor+ 11n_1+n_2+7v+7.
$
At least one of $y_1, y_2, y_3$ does not vanish modulo $\ell^{t_{\max}}$.
\end{proposition}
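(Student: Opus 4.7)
The plan is to argue by contradiction, assuming that $y_1,y_2,y_3$ all vanish modulo $\ell^{t_{\max}}$ and extracting a divisibility condition incompatible with $b_0(E_1\times E_2/K_\ell)$. Set $N := t_{\max}-11n_1-n_2-7v-6$, which with the given value of $t_{\max}$ equals $\lfloor v_\ell(b_0(E_1\times E_2/K_\ell))/2\rfloor+1$. Under the assumption, the proposition proved above produces a matrix $M\in M_2(\mathbb{Z}_\ell)$, at least one of whose entries is an $\ell$-adic unit, such that $g_2\equiv Mg_1M^{-1}\pmod{\ell^{N}}$ for every $(g_1,g_2)\in H_\ell$. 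Rewritten as $g_2M\equiv Mg_1\pmod{\ell^N}$ and reduced modulo $\ell^N$, this says precisely that the reduction $\overline{M}$ is a $\operatorname{Gal}(\overline{K}/K_\ell)$-equivariant homomorphism $E_1[\ell^N]\to E_2[\ell^N]$.

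Next I would translate this approximate intertwiner into a subgroup scheme: the graph $\Gamma_N:=\{(w,\overline{M}w):w\in E_1[\ell^N]\}$ is a $K_\ell$-rational subgroup of $(E_1\times E_2)[\ell^N]$ of order $\ell^{2N}$, so the quotient $A_N:=(E_1\times E_2)/\Gamma_N$ is an abelian surface over $K_\ell$, and the projection $\pi_N:E_1\times E_2\to A_N$ is a $K_\ell$-isogeny of degree $\ell^{2N}$. Since $E_1\times E_2$ is a TM-product over $K_\ell$ (by the absolute non-isogeny and non-CM hypotheses on $E_1,E_2$), the defining property of $b_0$ supplies a $K_\ell$-isogeny $\phi_N:A_N\to E_1\times E_2$ whose degree divides $b_0(E_1\times E_2/K_\ell)$.

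The key step is then to study the composition $\phi_N\circ\pi_N\in\operatorname{End}_{K_\ell}(E_1\times E_2)$. Because $\operatorname{End}_{\overline{K}}(E_1\times E_2)=\mathbb{Z}\oplus\mathbb{Z}$, this endomorphism is diagonal, acting as $(a,b)\cdot\operatorname{Id}$ for some integers $a,b$, so it has degree $a^2b^2$ and kernel $E_1[a]\times E_2[b]$. Containment $\Gamma_N\subseteq\ker(\phi_N\circ\pi_N)$ forces $aw=0$ and $b\overline{M}w=0$ for every $w\in E_1[\ell^N]$: the first condition gives $\ell^N\mid a$, and the second says $bM\equiv 0\pmod{\ell^N}$, which combined with the existence of an $\ell$-adic unit entry of $M$ yields $\ell^N\mid b$ as well. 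Hence $\ell^{4N}\mid a^2b^2 = \deg(\phi_N)\cdot\ell^{2N}$, so $\ell^{2N}$ divides $\deg(\phi_N)$ and therefore divides $b_0(E_1\times E_2/K_\ell)$. This gives $2N\leq v_\ell(b_0(E_1\times E_2/K_\ell))$, i.e.\ $N\leq\lfloor v_\ell(b_0(E_1\times E_2/K_\ell))/2\rfloor$, directly contradicting our choice $N=\lfloor v_\ell(b_0(E_1\times E_2/K_\ell))/2\rfloor+1$.

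The only nontrivial step is the first one, where one has to verify that the approximate conjugation provided by Proposition \ref{prop_ApproximateMorphismsAreInner} is precise enough to make the graph $\Gamma_N$ genuinely Galois-stable (and not merely stable up to a smaller modulus); this is why the parameters have been arranged so that the congruence holds modulo $\ell^{T-2v}$ with $T-2v=N$. The remainder is a clean application of the isogeny theorem packaged in $b_0$ together with the rigidity of $\operatorname{End}_{\overline{K}}(E_1\times E_2)$.
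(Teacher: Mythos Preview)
Your argument is correct and follows essentially the same route as the paper: form the Galois-stable graph $\Gamma_N$ using the approximate intertwiner $M$, quotient to get an isogenous surface, pull back via $b_0$, and use the diagonal form of $\operatorname{End}(E_1\times E_2)$ together with the $\ell$-adic unit entry of $M$ to force a divisibility $\ell^{2N}\mid b_0(E_1\times E_2/K_\ell)$ that contradicts the choice of $N$. The only cosmetic difference is that the paper phrases the last step as ``the projection of $\Gamma$ on $E_2$ contains a point of exact order $\ell^H$'' rather than your equivalent ``$bM\equiv 0\pmod{\ell^N}$ forces $\ell^N\mid b$''.
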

\begin{proof}
If one of $y_1, y_2, y_3$ is nonzero modulo $\ell^{n_2}$ we are done, so suppose once more that $t \geq n_2+1$ is an integer such that $y_1 \equiv y_2 \equiv y_3 \equiv 0 \pmod{\ell^t}$. We keep using the symbols $M, T$ from above, and set $H:=T-2v$. We shall prove that $t \leq t_{\max}-1$.

By definition, for every $w \in E_1[\ell^H]$ we have $\ell^H w = 0$, so for every $(g_1,g_2) \in H_\ell$ we have
\[
Mg_1w = Mg_1M^{-1} Mw = (g_2 M + O(\ell^H)) w=g_2Mw.
\]
It follows that the subgroup
$
\Gamma=\left\{ (w,Mw) \bigm| w \in E_1[\ell^H] \right\}
$
of $E_1 \times E_2$ is defined over $K_\ell$: indeed for any $(g_1,g_2) \in H_\ell$ and $(w,Mw) \in \Gamma$ we have
\[
(g_1,g_2) \cdot (w,Mw)=(g_1w,g_2Mw)=(g_1w,Mg_1w) \in \Gamma.
\]

Thus the abelian variety $A^*=\left(E_1 \times E_2\right)/\Gamma$ is defined over $K_\ell$, and the canonical projection $E_1 \times E_2 \to A^*$ is an isogeny of degree $|E_1[\ell^H]|=\ell^{2H}$; on the other hand, since we are in the situation of definition \ref{def_b0} we also have an isogeny $A^* \to E_1 \times E_2$ of degree $b$ dividing $b_0(E_1 \times E_2/K_\ell)$, and the composition of the two is an endomorphism $\varphi$ of $E_1 \times E_2$ that kills $\Gamma$. Since $E_1, E_2$ are not isogenous over $\overline{K}$, such an endomorphism corresponds to a pair $(\varphi_1,\varphi_2)$ where $\varphi_i$ is an endomorphism of $E_i$, and the assumption $\operatorname{End}_{\overline{K}}(E_i)=\mathbb{Z}$ implies that each $\varphi_i$ is multiplication by an integer. Moreover, since at least one of the coefficients of $M$ is an $\ell$-adic unit, we deduce that the projection of $\Gamma$ on $E_2$ contains at least one point of exact order $\ell^H$, so $\varphi$, which kills $\Gamma$, must be of the form $\left(\begin{matrix} \ell^H e_1 & 0 \\ 0 & \ell^H e_2 \end{matrix} \right)$ for some $e_1, e_2 \in \mathbb{Z}$. 
It follows that ${e_1}^2{e_2}^2\ell^{4H}=\deg(\varphi)=\ell^{2H}b$, hence we have $2H \leq v_\ell(b) \leq v_\ell(b_0(E_1 \times E_2/K_\ell))$ and $2t \leq v_\ell(b_0(E_1 \times E_2/K_\ell)) + 2(11n_1+n_2+7v+6)$. This inequality is not satisfied for any $t \geq t_{\max}$. %$
%t= \left\lfloor \frac{v_\ell(b_0(E_1 \times E_2/K_\ell))}{2} \right\rfloor+ 11n_1+n_2+7v+7$, so for this value of $t$ the Lie algebra $\ElThree$ does not vanish modulo $\ell^t$.
\end{proof}

Combined with corollary \ref{cor_EasyCase}, the last proposition shows that $\ElThree$ contains $\ell^{f_1}\mathfrak{sl}_2(\mathbb{Z}_\ell)$, where 
\[
 f_1=\left\lfloor \frac{v_\ell(b_0(E_1 \times E_2/K_\ell))}{2} \right\rfloor+ 11n_1+5n_2+11v+7,\] 
and therefore $L(H_\ell)$ contains $0 \oplus \ell^{f_1} \mathfrak{sl}_2(\mathbb{Z}_\ell)$. Exchanging the roles of $E_1$ and $E_2$ and repeating the whole argument, we deduce that $L(H)$ contains $\ell^{f}\mathfrak{sl}_2(\mathbb{Z}_\ell) \oplus \ell^{f}\mathfrak{sl}_2(\mathbb{Z}_\ell)$, where now
\begin{equation}\label{eq_f}
f=\left\lfloor \frac{v_\ell(b_0(E_1 \times E_2/K_\ell))}{2} \right\rfloor+ 16\max\left\{n_1,n_2\right\} +11v + 7.
\end{equation}
We now have all we need to prove the following proposition:

%Hence $\ell^{t} | b_0(E_1 \times E_2,K) \ell^{6n_1+n_2+4v+5}$, i.e. if we take $t=(v_\ell(b_0(E_1 \times E_2,K)) + 6n_1+n_2+4v+5) +1$, then $\ElThree$ does not vanish modulo $\ell^t$. Lemma \ref{lemma_EasyCase} then shows that $\ElThree$ contains $\ell^{f}\mathfrak{sl}_2(\mathbb{Z}_\ell)$, where $f=v_\ell(b_0(E_1 \times E_2,K))+6n_1+4n_2+7v+6$. It follows that $L(G)$ contains $\ell^{f}\mathfrak{sl}_2(\mathbb{Z}_\ell) \oplus \ell^{f}\mathfrak{sl}_2(\mathbb{Z}_\ell)$, and we deduce:
\begin{proposition}\label{prop_Finale2Curves}
Let $E_1,E_2$ be elliptic curves over $K$ that are not isogenous over $\overline{K}$ and do not admit complex multiplication over $\overline{K}$. Let $\ell$ be a prime number.

Suppose the image of $\abGal{K_\ell} \to \operatorname{Aut}(T_\ell(E_i))$ contains $\mathcal{B}_\ell(n_i)$ for $i=1,2$ (where $n_i \geq 2$ for $\ell=2$ and $n_i \geq 1$ for $\ell=3$ or $5$). Let $f$ be given by formula \eqref{eq_f}. %=v_\ell(b_0(E_1 \times E_2,K))+6n_1+6n_2$.
If $\ell$ is odd, the image $G_\ell$ of $\abGal{K} \to \operatorname{Aut}(T_\ell(E_1)) \times \operatorname{Aut}(T_\ell(E_2))$ contains $\mathcal{B}_\ell(4f, 4f)$; if $\ell=2$, the image $G_2$ of $\abGal{K} \to \operatorname{Aut}(T_2(E_1)) \times \operatorname{Aut} (T_2(E_2))$ contains %\[\mathcal{B}_2(12(f+13n_2+5n_1+6), 12(f+13n_1+5n_2+6)).\]
\[\mathcal{B}_2(12(f+17\max\{n_1,n_2\}+13)+1,12(f+17\max\{n_1,n_2\}+13)+1).\]
\end{proposition}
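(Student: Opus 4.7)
The plan is to apply the two group-level results from Section~\ref{sect_PinkResults} --- Theorem~\ref{thm_PinkGL22} for odd $\ell$ and Theorem~\ref{thm_PinkGL222} for $\ell=2$ --- to the subgroup $H_\ell$, taking as input the Lie-algebra containment $L(H_\ell) \supseteq \ell^f\mathfrak{sl}_2(\mathbb{Z}_\ell) \oplus \ell^f\mathfrak{sl}_2(\mathbb{Z}_\ell)$ already established in the preceding paragraphs. The remainder is bookkeeping, and the final step of passing from $H_\ell$ back to $G_\ell$ costs nothing since $H_\ell \subseteq G_\ell$ by construction.

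For odd $\ell$, I split according to whether Assumption~\ref{assumpt_GaloisIsSmall} is in force. If the assumption fails, then by the very statement of Theorem~\ref{thm_PinkGL22} we already have $G_\ell \supseteq \mathcal{B}_\ell(20\max\{m_1,m_2\}, 20\max\{m_1,m_2\})$, and this suffices because $4f \geq 64\max\{n_1,n_2\} \geq 20\max\{m_1,m_2\}$ (using $m_i \leq n_i$ from Remark~\ref{rmk_ValuesofMandN}). If the assumption holds, Theorem~\ref{thm_PinkGL22} provides $H_\ell$ enjoying property~$(\ast)$; applying $(\ast)$ with $k=f$ gives $H_\ell \supseteq \mathcal{B}_\ell(p,p)$ with $p = 2f + \max\{2f, 8n_1, 8n_2\}$. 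Since $2f \geq 32\max\{n_1,n_2\} \geq 8\max\{n_1,n_2\}$, the maximum collapses to $2f$, so $p=4f$, as required.

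For $\ell=2$, I apply Theorem~\ref{thm_PinkGL222} to $H_2$. Its hypotheses are immediate from the construction of $K_2$ as the fixed field of the kernel of $G_2 \to \operatorname{GL}_2(\mathbb{Z}/8\mathbb{Z})^2$: the projection of $H_2$ modulo $4$ is trivial, and determinants of elements of $H_2$ are congruent to $1$ modulo $8$. The condition $n_1, n_2 \geq 4$ required by the theorem can be arranged by harmlessly replacing each $n_i$ with $\max(n_i,4)$, and the value $k=f$ satisfies $k \geq 2$ since formula~\eqref{eq_f} already contains the summand $11v+7=18$ when $\ell=2$. The theorem then yields
\[
H_2 \supseteq \mathcal{B}_2\bigl(12(f + 11n_2 + 5n_1 + 12) + 1,\; 12(f + 11n_1 + 5n_2 + 12) + 1\bigr),
\]
and the trivial inequalities $11n_i + 5n_j \leq 16\max\{n_1,n_2\} \leq 17\max\{n_1,n_2\}$ together with $12 \leq 13$ show that this subgroup sits inside the one claimed in the statement.

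The substantive work of this section was already done earlier: establishing the Lie-algebra inclusion $L(H_\ell) \supseteq \ell^f\mathfrak{sl}_2(\mathbb{Z}_\ell)^{\oplus 2}$ via Proposition~\ref{prop_ApproximateMorphismsAreInner} together with the isogeny bound for $b_0(E_1 \times E_2 / K_\ell)$. By comparison, the present proposition is essentially a translation from Lie algebras back to groups, and the only mildly delicate point --- the one I expect to require the closest attention --- is verifying that the exponents spat out by property~$(\ast)$ and by Theorem~\ref{thm_PinkGL222} collapse neatly into the stated formulas $4f$ and $12(f + 17\max\{n_1,n_2\} + 13) + 1$.
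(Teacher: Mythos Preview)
Your proof is correct and follows essentially the same approach as the paper's own (very terse) argument: split on Assumption~\ref{assumpt_GaloisIsSmall} for odd~$\ell$, invoke property~$(\ast)$ of Theorem~\ref{thm_PinkGL22} with $k=f$ when the assumption holds, and apply Theorem~\ref{thm_PinkGL222} directly for $\ell=2$. Your version is in fact more explicit than the paper's---you spell out the numerical checks (e.g.\ $2f \geq 8\max\{n_1,n_2\}$ and the collapse of the $\ell=2$ exponent into $17\max\{n_1,n_2\}+13$) and you flag the $n_i \geq 4$ hypothesis of Theorem~\ref{thm_PinkGL222}, which the paper passes over in silence.
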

\begin{proof}
For $\ell=2$ the result follows at once from theorem \ref{thm_PinkGL222}. For odd $\ell$, and under assumption \ref{assumpt_GaloisIsSmall}, the result similarly follows from property $(\ast)$ of $H_\ell$ given in theorem \ref{thm_PinkGL22} and the fact that clearly $2f > 8 \max\left\{n_1,n_2\right\}$. On the other hand, if assumption \ref{assumpt_GaloisIsSmall} does \textit{not} hold, then $G_\ell$ contains $\mathcal{B}_\ell\left( 20\max\{n_1,n_2\},20\max\{n_1,n_2\} \right)$ (note that we can assume $m_1 \leq n_1, m_2 \leq n_2$ without loss of generality, cf.~remark \ref{rmk_ValuesofMandN}), which is stronger than what we are claiming.
%the result follows from under the assumption 
%The proposition follows immediately from theorems \ref{thm_PinkGL22} (for $\ell \neq 2$) and \ref{thm_PinkGL222} (for $\ell=2$).
\end{proof}

\section{Conclusion}\label{sect_Conclusion}
Consider again the case of two elliptic curves $E_1,E_2$ defined over $K$, non-isogenous over $\overline{K}$ and such that $\operatorname{End}_{\overline{K}}(E_i)=\mathbb{Z}$. Let $\mathcal{P}$ be the set of primes $\ell$ for which $G_\ell$ does not contain $\operatorname{SL}_2(\mathbb{Z}_\ell)^2$.% Rewriting the proof of proposition 1 of \cite{MR1209248} in terms of the function $b_0$ of definition \ref{def_b0} we get:
\begin{lemma}
Let $\ell$ be a prime. If $\ell$ does not divide the product 
\[
30b_0(E_1/K;60) b_0\left(E_1^2/K;2\right) b_0(E_2/K;60) b_0\left(E_2^2/K;2\right) b_0(E_1 \times E_2/K;2),
\]
then $\ell$ is not in $\mathcal{P}$.
\end{lemma}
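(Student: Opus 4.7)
The plan is to reduce the statement to its mod-$\ell$ version via the Frattini property of the projection $\operatorname{SL}_2(\mathbb{Z}_\ell)^2 \twoheadrightarrow \operatorname{SL}_2(\mathbb{F}_\ell)^2$: for $\ell \geq 5$, any closed subgroup of $\operatorname{SL}_2(\mathbb{Z}_\ell)^2$ surjecting modulo $\ell$ onto $\operatorname{SL}_2(\mathbb{F}_\ell)^2$ equals $\operatorname{SL}_2(\mathbb{Z}_\ell)^2$. Since $\ell \nmid 30$ forces $\ell \geq 7$, it therefore suffices to show that $G_\ell \bmod \ell$ contains $\operatorname{SL}_2(\mathbb{F}_\ell)^2$.

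For the single-curve step, I would verify that each projection $G_{\ell,i}$ surjects modulo $\ell$ onto $\operatorname{SL}_2(\mathbb{F}_\ell)$. Dickson's classification of the subgroups of $\operatorname{PGL}_2(\mathbb{F}_\ell)$ (valid since $\ell \geq 7$) places the mod-$\ell$ image of any non-surjective $G_{\ell,i}$ in a Borel subgroup, in the normalizer of a split or a non-split Cartan, or in the preimage of $A_4, S_4$ or $A_5$. The Borel and split-Cartan-normalizer cases produce, possibly after a quadratic extension, a rational cyclic $\ell$-isogeny on $E_i$, so by Theorem \ref{thm_Isogeny} we get $\ell \mid b_0(E_i/K;2) \mid b_0(E_i/K;60)$. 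The non-split-Cartan-normalizer case is handled by Masser's square-of-curve trick \cite{MR1619802}: over the quadratic extension killing the quotient $N(C)/C \cong \mathbb{Z}/2\mathbb{Z}$, the $\mathbb{F}_{\ell^2}$-structure on $E_i[\ell]$ yields a Galois-stable subgroup of $E_i^2[\ell]$ and hence an isogeny of $E_i^2$ of $\ell$-power degree, forcing $\ell \mid b_0(E_i^2/K;2)$. In the exceptional cases, over an extension of degree dividing $|A_5| = 60$ the image becomes scalar, so $E_i$ acquires a rational cyclic $\ell$-isogeny over that extension and $\ell \mid b_0(E_i/K;60)$. Our hypothesis excludes all these possibilities, so $G_{\ell,i}(\ell) \supseteq \operatorname{SL}_2(\mathbb{F}_\ell)$ for $i = 1, 2$.

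For the joint step I would apply Goursat's lemma to $G_\ell(\ell) \cap \operatorname{SL}_2(\mathbb{F}_\ell)^2$. For $\ell \geq 5$ the only non-trivial normal subgroup of $\operatorname{SL}_2(\mathbb{F}_\ell)$ is $\{\pm I\}$, so a proper Goursat subgroup with full projections corresponds, after a possible quadratic extension $K'$ of $K$ absorbing the $\{\pm I\}$-twist, to an isomorphism $E_1[\ell] \cong E_2[\ell]$ of $\operatorname{Gal}(\overline{K}/K')$-modules (using that every automorphism of $\operatorname{SL}_2(\mathbb{F}_\ell)$ is inner). Its graph is a Galois-stable subgroup $\Gamma \subset (E_1 \times E_2)_{K'}$ of order $\ell^2$, and $A^* := (E_1 \times E_2)_{K'}/\Gamma$ is $K'$-isogenous to $E_1 \times E_2$. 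Theorem \ref{thm_Isogeny} supplies a converse isogeny $\psi : A^* \to E_1 \times E_2$ of degree $b$ dividing $b_0(E_1 \times E_2/K;2)$; composing with the quotient map is an endomorphism of $E_1 \times E_2$ which, since $\operatorname{End}_{\overline{K}}(E_1 \times E_2) = \mathbb{Z} \times \mathbb{Z}$ by the non-isogeny and non-CM hypotheses, is given by a pair $(n_1, n_2) \in \mathbb{Z}^2$. It must kill $\Gamma$, which forces $\ell \mid n_1$ and $\ell \mid n_2$, hence $\ell^4 \mid n_1^2 n_2^2 = \ell^2 b$ and so $\ell^2 \mid b_0(E_1 \times E_2/K;2)$, contradicting the hypothesis. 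Combining with the Frattini reduction finishes the proof.

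The main technical obstacle is the non-split-Cartan-normalizer case in the single-curve step, which requires Masser's square-of-curve argument to convert the group-theoretic constraint into an isogeny bound on $E_i^2$ (rather than $E_i$) over a controlled quadratic extension, as well as ensuring in the joint step that the automorphism produced by Goursat is indeed inner and that the $\{\pm I\}$-twist truly only costs a quadratic extension.
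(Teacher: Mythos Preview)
Your proof is correct and follows exactly the same three-step strategy as the paper: (i) the lifting (Frattini) step is what the paper cites as \cite[Proposition 4.2]{MR1610883}; (ii) your single-curve Dickson analysis is precisely the content of \cite[Lemma 8.2]{AdelicEC}, which the paper simply quotes; (iii) your Goursat-plus-isogeny argument for the joint step is the proof of \cite[Proposition 1]{MR1209248}, again just cited in the paper. In other words, you have unpacked the three references the paper invokes, and the details you supply (the Masser square-of-curve trick for the non-split Cartan normaliser, the $\{\pm I\}$-twist absorbed by a quadratic extension, the $\ell^2 \mid b_0(E_1\times E_2/K;2)$ conclusion) match those sources.
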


\begin{proof}
\cite[Lemma 8.2]{AdelicEC} implies that for a prime $\ell$ that does not divide
\[
b_0(E_1/K;60) b_0\left(E_1^2/K;2\right) b_0(E_2/K;60) b_0\left(E_2^2/K;2\right)
\]
both projections of $G_\ell(\ell)$ on the two factors $\operatorname{GL}_2(\mathbb{F}_\ell)$ contain $\operatorname{SL}_2(\mathbb{F}_\ell)$. Under this hypothesis, the proof of \cite[Proposition 1]{MR1209248} shows that $G_\ell(\ell)$ contains $\operatorname{SL}_2(\mathbb{F}_\ell)^2$ unless $\ell^2 \bigm| b_0(E_1 \times E_2/K;2)$. Finally, when $\ell \geq 5$ (as is the case here, thanks to the factor 30 appearing in the product) a closed subgroup of $\operatorname{GL}_2(\mathbb{Z}_\ell)^2$ whose projection modulo $\ell$ contains $\operatorname{SL}_2(\mathbb{F}_\ell)^2$ contains all of $\operatorname{SL}_2(\mathbb{Z}_\ell)^2$ (this is well-known; see for example \cite[Proposition 4.2]{MR1610883}).
\end{proof}

%The following is an immediate consequence:

\begin{corollary}\label{cor_ProdBadPrimes}
We have
\[
\prod_{\ell \in \mathcal{P}} \ell \leq 30 b_0(E_1/K;60) b_0\left(E_1^2/K;2\right) b_0(E_2/K;60) b_0\left(E_2^2/K;2\right) b_0(E_1 \times E_2/K;2).
\]
%holds.
\end{corollary}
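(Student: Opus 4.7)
The plan is to obtain Corollary \ref{cor_ProdBadPrimes} as an essentially immediate consequence of the preceding lemma, using only elementary divisibility considerations among distinct primes. Since the statement is of corollary strength, I do not expect any significant obstacle.

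First, I would take the contrapositive of the preceding lemma: every prime $\ell \in \mathcal{P}$ must divide the integer
\[
N := 30 \cdot b_0(E_1/K;60) \cdot b_0\!\left(E_1^2/K;2\right) \cdot b_0(E_2/K;60) \cdot b_0\!\left(E_2^2/K;2\right) \cdot b_0(E_1 \times E_2/K;2),
\]
which is indeed a positive integer because each $b_0$ is, by construction in Definition \ref{def_b0}, a positive integer. (One should observe that $\mathcal{P}$ is a set of distinct primes, so that multiplicity is not an issue.)

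Next, since the primes in $\mathcal{P}$ are pairwise distinct, and each divides $N$, their product $\prod_{\ell \in \mathcal{P}} \ell$ also divides $N$; in particular this product is bounded above by $N$, which is exactly the assertion of the corollary. Implicit here is the fact that $\mathcal{P}$ is finite, which is clear from the lemma itself: any $\ell \in \mathcal{P}$ is a prime divisor of the fixed integer $N$, so $\mathcal{P}$ has at most $\omega(N) < \infty$ elements. Thus the displayed product is a finite product of distinct prime divisors of $N$, and the inequality follows.

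Since no further estimate is required beyond the already-cited lemma and the trivial observation that a product of distinct prime divisors of a positive integer divides that integer, the proof reduces to these two lines and there is no computational or structural difficulty to overcome.
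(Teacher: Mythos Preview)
Your proof is correct and matches the paper's approach: the corollary is stated there without proof, as it follows immediately from the preceding lemma by precisely the divisibility argument you give.
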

Let now $\ell$ be a prime different from $2$ and $3$, and for $j=1,2$ set
\[
D_j(\infty)=b_0(E_j/K;24)^5 b_0(E_j^2/K;24).
\]
Since clearly $\ell^{v_\ell(D_j(\infty))+1}$ does not divide $D_j(\infty)$, we see from \cite[Theorem 7.5]{AdelicEC} that $G_{\ell,j}$ contains 
$\mathcal{B}_\ell\left(16 v_\ell(D_j(\infty))+12 \right),$
 hence the same is true for $H_{\ell,j}$, cf.~remark \ref{rmk_ValuesofMandN}. Therefore -- in the notation of the previous section -- we can take $n_j=n_j(\ell)=16 v_\ell(D_j(\infty))+12$ (this obviously satisfies the condition $n_1, n_2 \geq 1$ for $\ell=5$ -- cf.~definition \ref{def_n1n2}). For $\ell=3$, using the fact that our group $H_{3,j}$ is the group $H_3$ of \cite{AdelicEC} we see (again by \cite[Theorem 7.5]{AdelicEC}) that we can take
 \[
 n_j(3) = 16  v_3\left(b_0(E/K_3)^5 b_0(E^2/K_3)\right)+12 \leq 16 v_3(D_j(\infty))+12;
 \]
similarly, for $\ell=2$ we can take $n_j(2) = 48v_2\left(b_0(E_j/K_2)^5 b_0(E_j^2/K_2)\right)+38.$
Applying proposition \ref{prop_Finale2Curves} with these values of $n_j(\ell)$ we get:

%we can then take $n_j=24 (v_\ell(D_j(\infty))+1$ (resp. $72 (v_2(D_j(\infty))+4$) to get:
\begin{lemma}\label{lemma_Estimatesb0}
Let $\ell$ be a prime. The group $G_\ell$ contains $\mathcal{B}_\ell(f(\ell),f(\ell))$, where $f(\ell)$ is given by
\[
f(\ell)=2v_\ell(b_0(E_1 \times E_2/K;2\cdot 48^2))+2^{10} \max\left\{v_\ell(D_1(\infty)),v_\ell(D_2(\infty)) \right\}+800
\]
for odd $\ell$ and
\[
f(2) = 6 v_2(b_0(E_1 \times E_2/K_2))+19008 \max_j \left\{v_2\left(b_0(E_j/K_2)^5 b_0(E_j^2/K_2)\right) \right\}+ 15421
\]
for $\ell=2$.
\end{lemma}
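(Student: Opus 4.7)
The plan is to prove this lemma by direct substitution into Proposition \ref{prop_Finale2Curves}, using the explicit values $n_j(\ell)$ already derived in the discussion preceding the lemma, and then performing some elementary bookkeeping of constants.

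For odd $\ell$, Proposition \ref{prop_Finale2Curves} tells us that $G_\ell$ contains $\mathcal{B}_\ell(4f, 4f)$, where
\[
f = \left\lfloor \frac{v_\ell(b_0(E_1 \times E_2/K_\ell))}{2} \right\rfloor + 16 \max\{n_1, n_2\} + 11v + 7.
\]
Since $v = 0$ here and $[K_\ell : K] \leq 2 \cdot 48^2$, definition \ref{def_b0} gives $b_0(E_1 \times E_2/K_\ell) \mid b_0(E_1 \times E_2/K; 2 \cdot 48^2)$, hence $4\lfloor v_\ell(\cdot)/2 \rfloor \leq 2 v_\ell(b_0(E_1 \times E_2/K; 2 \cdot 48^2))$. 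I then substitute $n_j(\ell) = 16 v_\ell(D_j(\infty)) + 12$ (valid for $\ell \neq 2,3$, and, by the observation in the preceding paragraph, also valid as an upper bound for $\ell = 3$) and compute $64 \max\{n_1, n_2\} \leq 2^{10} \max_j v_\ell(D_j(\infty)) + 768$. Adding the tail term $28$ from $4(11v + 7) = 28$ and rounding up to $800$ to absorb the floor yields the stated formula for $f(\ell)$.

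For $\ell = 2$, Proposition \ref{prop_Finale2Curves} gives that $G_2$ contains $\mathcal{B}_2(12(f + 17\max\{n_1,n_2\} + 13) + 1, \ldots)$ with $f = \lfloor v_2(b_0(E_1 \times E_2/K_2))/2 \rfloor + 16\max\{n_1,n_2\} + 18$ (using $v = 1$). Expanding $12f$ contributes $6 v_2(b_0(E_1 \times E_2/K_2)) + 192 \max\{n_1,n_2\} + 216$, and the remaining $12 \cdot 17 \max\{n_1,n_2\} + 12 \cdot 13 + 1 = 204 \max\{n_1,n_2\} + 157$ combines to give $6 v_2(\cdot) + 396 \max\{n_1,n_2\} + 373$. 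Substituting $n_j(2) = 48 v_2\bigl(b_0(E_j/K_2)^5 b_0(E_j^2/K_2)\bigr) + 38$ and using $396 \cdot 48 = 19008$, $396 \cdot 38 + 373 = 15421$ yields the claimed expression for $f(2)$. Note that no passage to a larger field is needed in this case since the formula in the lemma statement already uses $b_0(\cdot/K_2)$ directly.

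There is no real obstacle here beyond careful arithmetic: the substantive work is entirely packaged in Proposition \ref{prop_Finale2Curves} and the choice of $n_j(\ell)$ from the explicit open image theorem for a single curve (Theorem 7.5 of \cite{AdelicEC}). The only minor points to verify are that the hypotheses $n_1, n_2 \geq 2$ for $\ell = 2$ and $n_1, n_2 \geq 1$ for $\ell = 3, 5$ are satisfied — which follows trivially from the explicit lower bounds in the formulas for $n_j(\ell)$ — and that the chosen constants are genuinely rounded up rather than down at each step.
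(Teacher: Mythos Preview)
Your proposal is correct and follows exactly the same route as the paper, which simply states the lemma as a direct consequence of plugging the values $n_j(\ell)$ into Proposition~\ref{prop_Finale2Curves}. Your careful arithmetic (in particular the verifications $64\cdot16=2^{10}$, $768+28\leq 800$, $396\cdot48=19008$, and $396\cdot38+373=15421$) fills in precisely the bookkeeping the paper leaves implicit.
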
 

%Since for any $\ell$ and any $f$ the index of $\mathcal{B}_\ell(f,f)$ in $\operatorname{SL}_2(\mathbb{Z}_\ell)$ is of the form $A(\ell) \cdot B(\ell)$, where $A(\ell)=\left|\operatorname{SL}_2(\mathbb{F}_\ell) \right|^2$ is strictly less than $\ell^6$ and $B(\ell)=\ell^{6(f-1)}$, we immediately have
%\begin{proposition}
%The inequality
%\[
%\begin{aligned}
%\prod_{\ell \in \mathcal{P}} \left[ \operatorname{SL}_2(\mathbb{Z}_\ell) : \left( G_{\ell} \cap \operatorname{SL}_2(\mathbb{Z}_\ell) \right) \right] & < \prod_{\ell \in \mathcal{P}} \ell^6 \cdot \prod_{\ell \in \mathcal{P}} \ell^{6f(\ell)-6} \\ 
%& \leq b_0(E_1 \times E_2/K; 2 \cdot 48^2)^6 \cdot (D_1(\infty)D_2(\infty))^{12096} \prod_{\ell \in \mathcal{P}\setminus \left\{2\right\}} \ell^{4200} \cdot 2^{8064}
%\end{aligned}
%\]
%holds.
%\end{proposition}

%As the bound given by this proposition is so large as not to be of any practical use, we choose not to give precise estimates and, for $\ell \in \mathcal{P}$, roughly bound $f(\ell)$ with $20000v_\ell(b_0(E_1 \times E_2/K;2 \cdot 48^2))$.%, the main contribution coming from $\prod_{\ell \in \mathcal{P}} \ell^{1564 \cdot 6}$, since the bound we have on this quantity (corollary \ref{cor_ProdBadPrimes}) contains a factor of $b_0(E_1 \times E_2/K;2)^{9384}$.

Using the very same argument as in \cite[§9]{AdelicEC}, and some very crude estimates, we deduce
\begin{proposition}\label{prop_ProductDecomposition}
Denote by $G_\infty$ the image of $\abGal{K}$ inside
\[\prod_{\ell} \left( \operatorname{Aut} T_\ell(E_1) \times \operatorname{Aut} T_\ell(E_2) \right) \subset \operatorname{GL}_2(\hat{\mathbb{Z}})^2.\]

$G_\infty$ contains a subgroup $S$ of the form $S=\prod_{\ell} S_\ell$, where each $S_\ell$ coincides with $\operatorname{SL}_2(\mathbb{Z}_\ell)^2$ except for the finitely many primes that are in $\mathcal{P}$, for which $S_\ell=\mathcal{B}_\ell(f(\ell),f(\ell))$. The index of $S$ in $\operatorname{SL}_2(\hat{\mathbb{Z}})$ is bounded by $\displaystyle b(E_1 \times E_2/K;2 \cdot 48^2)^{10^4}$.%, where $\gamma=9 \cdot 10^{11}$.
\end{proposition}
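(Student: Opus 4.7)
The plan is to combine the local estimates of Lemma \ref{lemma_Estimatesb0} with a Goursat--Ribet type decoupling across primes to produce the subgroup $S$, and then estimate its index by multiplying out the local bounds. First, Lemma \ref{lemma_Estimatesb0} guarantees $G_\ell \supseteq \mathcal{B}_\ell(f(\ell),f(\ell))$ for every prime $\ell$, while for $\ell \notin \mathcal{P}$ we have $G_\ell \supseteq \operatorname{SL}_2(\mathbb{Z}_\ell)^2$ by definition of $\mathcal{P}$. The passage from these local containments to $G_\infty \supseteq \prod_\ell S_\ell$ follows the template of \cite[\S 9]{AdelicEC}: since for distinct primes $\ell \geq 5$ the simple quotients $\operatorname{PSL}_2(\mathbb{F}_\ell)$ are pairwise non-isomorphic, a closed subgroup of $\prod_\ell \operatorname{SL}_2(\mathbb{Z}_\ell)^2$ whose projection at each prime contains the specified $S_\ell$ (and coincides with the whole $\operatorname{SL}_2(\mathbb{Z}_\ell)^2$ for almost all $\ell$) must itself contain the full product $\prod_\ell S_\ell$; the finitely many small primes are absorbed into the definition of $\mathcal{P}$ and handled prime-by-prime.

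For the index estimate, recall that $[\operatorname{SL}_2(\mathbb{Z}_\ell) : \mathcal{B}_\ell(s)] < \ell^{3s}$ for $s \geq 1$, so
\[
[\operatorname{SL}_2(\hat{\mathbb{Z}})^2 : S] \;<\; \prod_{\ell \in \mathcal{P}} \ell^{6 f(\ell)}.
\]
Substituting the formula for $f(\ell)$ from Lemma \ref{lemma_Estimatesb0} splits this product into three pieces: the contribution of the $v_\ell\bigl(b_0(E_1\times E_2/K;2\cdot 48^2)\bigr)$ term multiplies to a fixed power of $b_0(E_1 \times E_2/K;2\cdot 48^2)$; the contribution of the $v_\ell(D_j(\infty))$ terms multiplies to a fixed power of $D_1(\infty) D_2(\infty)$; and the contribution of the additive constants in $f(\ell)$ is bounded by a fixed power of $\prod_{\ell \in \mathcal{P}} \ell$, which in turn is controlled by Corollary \ref{cor_ProdBadPrimes}. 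The prime $\ell=2$ contributes a single local factor whose estimate in terms of $b_0$ over $K_2$ follows the same pattern.

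To finish, I would apply Proposition \ref{prop_b0} to replace every $b_0$ quantity appearing in these bounds (for $E_j$, $E_j^2$, and $E_1 \times E_2$ over the various small extensions $K_\ell/K$) by the associated $b$-quantity. Since each of the $b$-values that arise is crudely dominated by $b(E_1 \times E_2/K;2\cdot 48^2)$ --- the exponents in the definition of $b$ following Theorem \ref{thm_Isogeny} grow only polynomially in the dimension and only logarithmically in the degree parameter --- everything collapses to a single power $b(E_1 \times E_2/K;2\cdot 48^2)^C$. The main obstacle, as the paper openly acknowledges, is purely bookkeeping: one must check that after summing all constants from $f(\ell)$, the Goursat upgrade, Corollary \ref{cor_ProdBadPrimes}, and the polynomial blow-up in Proposition \ref{prop_b0}, the combined exponent $C$ still fits under $10^4$. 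No new conceptual input is required beyond Lemma \ref{lemma_Estimatesb0} and the isogeny estimates already collected earlier.
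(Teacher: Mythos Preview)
Your proposal is correct and follows essentially the same approach as the paper: invoke the argument of \cite[\S 9]{AdelicEC} to pass from the local containments of Lemma~\ref{lemma_Estimatesb0} to the global product $S=\prod_\ell S_\ell\subset G_\infty$, bound the index by $\prod_{\ell\in\mathcal{P}}\ell^{6f(\ell)}$, split $f(\ell)$ into its three constituents, control the constant term via Corollary~\ref{cor_ProdBadPrimes}, and then absorb all $b_0$-quantities into $b(E_1\times E_2/K;2\cdot 48^2)$ using Proposition~\ref{prop_b0}. The only quibble is that your justification for the product decomposition leans on the non-isomorphism of the $\operatorname{PSL}_2(\mathbb{F}_\ell)$, which is really the ingredient for the primes \emph{outside} $\mathcal{P}$; for $\ell\in\mathcal{P}$ the cleaner reason is that $\prod_{\ell\in\mathcal{P}}\mathcal{B}_\ell(f(\ell),f(\ell))$ is pro-nilpotent and hence the product of its Sylow subgroups, each of which already lies in $G_\infty$---but this is exactly what the cited \S 9 argument supplies, so the point is cosmetic.
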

%$[\operatorname{SL}_2(\mathbb{Z}_\ell):S_\ell] \leq A(\ell) B(\ell) \leq \ell^{6f(\ell)}$ for every $\ell$.

We finally come to the adelic estimate for an arbitrary number of curves:
\begin{theorem}\label{thm_FinaleNCurves}
Let $E_1, \ldots, E_n$, $n \geq 2$, be elliptic curves defined over $K$, pairwise non-isogenous over $\overline{K}$. Suppose that $\operatorname{End}_{\overline{K}}(E_i)=\mathbb{Z}$ for $i=1,\ldots,n$ and let $G_\infty$ be the image of the natural representation
\[
\rho_\infty : \abGal{K} \to \prod_{i=1}^n \prod_{\ell} \operatorname{Aut} T_\ell(E_i) \cong \operatorname{GL}_2(\hat{\mathbb{Z}})^n.
\]
Then $G_\infty$ has index at most
\[
8^{n(n-2)} \zeta(2)^{n(n-1)} \cdot [K:\mathbb{Q}] \cdot \max_{i \neq j} b\left(E_i \times E_j/K;2 \cdot 48^2\right)^{5000 n(n-1)}
\]
in
\[
\Delta=\left\{ (x_1,\ldots,x_n) \in  \operatorname{GL}_2(\hat{\mathbb{Z}})^n \bigm\vert \det x_i = \det x_j \quad \forall i,j \right\}.
\]
\end{theorem}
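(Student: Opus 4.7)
The strategy is to reduce the adelic estimate to the two-curve case handled by Proposition \ref{prop_ProductDecomposition}, then glue the pairwise information via Corollary \ref{cor_ProductsOfManyCurves}, and finally account for the determinant by a short cyclotomic argument.

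First I would separate the determinant from the ``$\operatorname{SL}_2$-part''. The Weil pairing identifies the determinant of the $\ell$-adic Galois representation on each factor $E_i$ with the $\ell$-adic cyclotomic character, and the common determinant therefore gives a short exact sequence
\[
1 \longrightarrow G_\infty \cap \operatorname{SL}_2(\hat{\mathbb{Z}})^n \longrightarrow G_\infty \longrightarrow \chi_{\operatorname{cyc}}(\operatorname{Gal}(\overline{K}/K)) \longrightarrow 1,
\]
and an analogous sequence for $\Delta$ in place of $G_\infty$, with quotient all of $\hat{\mathbb{Z}}^\times$. Consequently
\[
[\Delta : G_\infty] = [\hat{\mathbb{Z}}^\times : \chi_{\operatorname{cyc}}(\operatorname{Gal}(\overline{K}/K))] \cdot [\operatorname{SL}_2(\hat{\mathbb{Z}})^n : G_\infty \cap \operatorname{SL}_2(\hat{\mathbb{Z}})^n],
\]
and the first factor is at most $[K \cap \mathbb{Q}^{\operatorname{cyc}} : \mathbb{Q}] \leq [K:\mathbb{Q}]$ by a standard property of the cyclotomic extension.

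Next I would produce the pairwise input needed for Corollary \ref{cor_ProductsOfManyCurves}. For each pair $i\neq j$ apply Proposition \ref{prop_ProductDecomposition} to the product $E_i \times E_j$: it yields a subgroup $S^{(i,j)} \subseteq \operatorname{SL}_2(\hat{\mathbb{Z}})^2$ of the required product form $\prod_{\ell} \mathcal{B}_\ell(f_\ell^{(i,j)},f_\ell^{(i,j)})$, contained in the $(i,j)$-projection of $G_\infty$, and of index at most $b(E_i \times E_j/K;2\cdot 48^2)^{10^4}$ in $\operatorname{SL}_2(\hat{\mathbb{Z}})^2$. Because $S^{(i,j)}$ already sits inside $\operatorname{SL}_2(\hat{\mathbb{Z}})^2$, it is in fact contained in the $(i,j)$-projection of $G_\infty \cap \operatorname{SL}_2(\hat{\mathbb{Z}})^n$, so the hypotheses of Corollary \ref{cor_ProductsOfManyCurves} are satisfied for $G := G_\infty \cap \operatorname{SL}_2(\hat{\mathbb{Z}})^n$ with $c \leq \max_{i\neq j} b(E_i \times E_j/K;2\cdot 48^2)^{10^4}$.

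Applying Corollary \ref{cor_ProductsOfManyCurves} then yields
\[
\bigl[\operatorname{SL}_2(\hat{\mathbb{Z}})^n : G_\infty \cap \operatorname{SL}_2(\hat{\mathbb{Z}})^n\bigr] < 2^{3n(n-2)} \zeta(2)^{n(n-1)} \max_{i\neq j} b(E_i\times E_j/K;2\cdot 48^2)^{10^4\cdot n(n-1)/2},
\]
and combining this with the determinant estimate from the first paragraph gives exactly the claimed bound, since $2^{3n(n-2)} = 8^{n(n-2)}$ and $10^4\cdot n(n-1)/2 = 5000\,n(n-1)$.

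There is no real obstacle here: the content of the theorem lies in the single-pair Proposition \ref{prop_ProductDecomposition} (which encapsulates all the Pink-type and isogeny-theoretic work of the preceding sections) and in the Goursat-type Corollary \ref{cor_ProductsOfManyCurves}. The only mild point requiring care is verifying the compatibility of the two short exact sequences above so that the indices multiply correctly, together with the small check that the $S^{(i,j)}$ given by Proposition \ref{prop_ProductDecomposition} automatically lifts to the projection of the $\operatorname{SL}_2$-part of $G_\infty$, which follows from the Weil-pairing observation that dictates the shape of $\Delta$.
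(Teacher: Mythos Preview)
Your proof is correct and follows essentially the same route as the paper: separate the determinant via the cyclotomic character to reduce to bounding $[\operatorname{SL}_2(\hat{\mathbb{Z}})^n : G_\infty \cap \operatorname{SL}_2(\hat{\mathbb{Z}})^n]$, then feed the pairwise subgroups $S^{(i,j)}$ from Proposition~\ref{prop_ProductDecomposition} into Corollary~\ref{cor_ProductsOfManyCurves}. Your remark that $S^{(i,j)}\subseteq\operatorname{SL}_2(\hat{\mathbb{Z}})^2$ together with $G_\infty\subseteq\Delta$ forces $S^{(i,j)}$ to lie in the $(i,j)$-projection of $G_\infty\cap\operatorname{SL}_2(\hat{\mathbb{Z}})^n$ is exactly the point the paper uses implicitly, and the numerical bookkeeping $2^{3n(n-2)}=8^{n(n-2)}$, $10^4\cdot n(n-1)/2=5000\,n(n-1)$ matches.
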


\begin{proof}
The Galois-equivariance of the Weil pairing implies that for every index $i=1,\ldots,n$ an element $g \in \abGal{K}$ acts on $T_\ell (E_i)$ through an automorphism of determinant $\chi_\ell(g)$. This immediately implies that $G_\infty \subseteq \Delta$. With a slight abuse of language, we write $\det$ for the map $\Delta \to \hat{\mathbb{Z}}$ sending $(x_1,\ldots,x_n)$ to $\det x_1$. 
The exact sequence
\[
1 \to G_\infty \cap \operatorname{SL}_2(\hat{\mathbb{Z}})^n \to G_\infty \stackrel{\det}{\longrightarrow} \hat{\mathbb{Z}}^\times \to \frac{\hat{\mathbb{Z}}^\times}{\det (G_\infty)}  \to 1
\]
and the inequality $\left| \displaystyle \frac{\hat{\mathbb{Z}}^\times}{\det (G_\infty)} \right| \leq [K:\mathbb{Q}]$ (cf.~proposition \cite[Proposition 8.1]{AdelicEC}) show that in order to establish the theorem it is enough to prove that the index of $G_\infty \cap \operatorname{SL}_2(\hat{\mathbb{Z}})^n$ inside $\operatorname{SL}_2(\hat{\mathbb{Z}})^n$ is bounded by
\[
8^{n(n-2)}\zeta(2)^{n(n-1)} \cdot \max_{i \neq j} b(E_i \times E_j/K;2 \cdot 48^2)^{5000 n(n-1)}.
\]

Set $G=G_\infty \cap \operatorname{SL}_2(\hat{\mathbb{Z}})$. For every pair $E_i, E_j$ of curves, we get from proposition \ref{prop_ProductDecomposition} a subgroup $S^{(i,j)}$ of
\[
\operatorname{SL}_2(\hat{\mathbb{Z}})^2 \subseteq \prod_{\ell} \operatorname{Aut} \left(  T_\ell(E_i) \right) \times \prod_{\ell} \operatorname{Aut} \left(  T_\ell(E_j) \right)
\]
that satisfies all the requirements of corollary \ref{cor_ProductsOfManyCurves}, and the theorem follows from this corollary upon recalling that the index of $S^{(i,j)}$ in $\operatorname{SL}_2(\hat{\mathbb{Z}})^2$ is bounded by 
$b(E_i \times E_j/K;2 \cdot 48^2)^{10000}$ .%thanks to the previous proposition.
\end{proof}

\bibliography{Biblio}{}
\bibliographystyle{plain}

\end{document}